% QS template
\documentclass[12pt]{amsart}  %[12pt] while developing, omitted for final
%\documentclass{amsart}  %used for final

%packages
\usepackage[latin1]{inputenc}
\usepackage{amsmath} 
\usepackage{amsfonts}
\usepackage{amssymb}
\usepackage{stmaryrd}
\usepackage{latexsym} 
\usepackage{graphicx}
\usepackage{subfigure}
\usepackage{color}
\usepackage{hyperref}
\usepackage{verbatim}
\usepackage[all]{xy}
\usepackage{graphics}
\usepackage{pdfsync}
%\usepackage{ytableau}
%\usepackage{natbib}
%\usepackage[notref, notcite]{showkeys} %shows labels

% 1in margins at left and right
\oddsidemargin=0in
\evensidemargin=0in
\textwidth=6.50in             % paper is 8.50in wide

% 1in margins at top and bottom
\headheight=10pt
\headsep=10pt
\topmargin=.5in
\textheight=8in

%% Theorem definitions
\theoremstyle{plain}
\newtheorem{theorem}{Theorem}[section]
\newtheorem{proposition}[theorem]{Proposition}

\newtheorem{lemma}[theorem]{Lemma}
\newtheorem{corollary}[theorem]{Corollary}
\newtheorem{conjecture}[theorem]{Conjecture}
\newtheorem{definition}[theorem]{Definition}
\newtheorem{example}[theorem]{Example}

\theoremstyle{remark}

\numberwithin{equation}{section}
 % A typesetting pun?!
\setcounter{MaxMatrixCols}{20}

%\newcommand{\todo}[1]{\vspace{5 mm}\par \noindent
%\marginpar{\textsc{ToDo}}
%\framebox{\begin{minipage}[c]{0.95 \textwidth} #1 \end{minipage}}\vspace{5 mm}\par}

%%remove the comment from the following line to remove all the
%% 'to do' remarks:
%%\renewcommand{\todo}[1]{}

%\newcommand{\comment}[1]{\vspace{5 mm}\par \noindent
%\marginpar{\textsc{Comment}}
%\framebox{\begin{minipage}[c]{0.95 \textwidth}
% #1 \end{minipage}}\vspace{5 mm}\par}

%%remove the comment from the following line to remove all the
%% 'comment' remarks:
%%\renewcommand{\comment}[1]{}

%\newcommand{\new}[1]{\ensuremath{\blacktriangleright}#1\ensuremath{\blacktriangleleft}}

%%%%%%blackboard letters
\newcommand{\bC}{\mathbb{C}}

%%%%%calligraphic commands%%%%

%%%%%%%%%%%% posets

\newcommand{\coverc}{\lessdot _{{c}}}

\newcommand{\lessc}{< _{{c}}}

\newcommand{\Lc}{\mathcal{L}_{{c}}}

\newcommand{\po}{\preccurlyeq _\alpha}
\newcommand{\equiva}{\sim _\alpha}

%%%%% some color macros  %%%%%

\newcommand{\tcr}[1]{\textcolor{red}{#1}}

%:Sym definitions
\newcommand{\Sym}{\ensuremath{\operatorname{Sym}}}

%:Qsym definitions
\newcommand{\Qsym}{\ensuremath{\operatorname{QSym}}}
				% monomials
				% fundamentals
				% canonicals
\newcommand{\qs}{{\mathcal{S}}}		% quasisymmetric  schurs
	% Young quasisymmetric  schurs
\newcommand{\SRCT}{\ensuremath{\operatorname{SRCT}}}

%\newcommand{\Fsym}{\textsl{FSym}}

%:Nsym definitions
\newcommand{\Nsym}{\ensuremath{\operatorname{NSym}}}
         	% noncommutative elementary fcns
         	% noncommutative complete fcns
           	% noncommutative  ribbon schurs
          	% noncommutative  schurs
      % Young noncommutative  schurs

%:coplactic algebra definitions
   % The copactic algebra
                        % coplactic class  

%:permutation algebra definitions
   % The permutation algebra
              % multiplication in the permutation algebra

%:rep ring definitions

%:class function algebra definitions

         % Young character

%:symmetric group definitions
\newcommand{\sgrp}{\mathfrak{S}}

%:Grothendieck Q-group definitions

%:0-Hecke alg definitions

\newcommand{\hn}{H_n(0)}
\newcommand{\smodule}{\mathbf{S}}

%: compositions and tableaux

\newcommand\partitionof[1]{\widetilde{#1}}

\newcommand{\set}{\mathrm{set}} % for sets corresponding to compositions
\newcommand{\des}{\mathrm{Des}} % for descent sets 
\newcommand{\comp}{\mathrm{comp}} % for compositions corresponding to sets
 % for content
\newcommand{\col}{\mathrm{col}} % for column sequence
 % for row removal
 % for column removal
 % for Young removal
 % for Young row removal
 % for Young column removal
 % for removal
\newcommand{\tup}{\mathfrak{t}}
 % for column reading word
 % for reverse tableaux
\newcommand{\rtau}{{\tau}} % for reverse composition tableaux

   % for refinement of compositions
   % for refinement of compositions

\newcommand{\suchthat}{\;|\;}
    % for skew composition shapes
    % for skew composition shapes
\newcommand{\cskew}{{/\!\!/}}% for skew composition shapes
% for Young skew composition shapes

    % for permutations and skew SCT
    % for permutations and skew SYCT

\newcommand{\spam}{\operatorname{span}}

\newcommand{\st}{\ensuremath{\operatorname{st}}}

%%% tableau macros %%%
\newlength\cellsize \setlength\cellsize{20\unitlength}
\savebox2{%
\begin{picture}(20,20)
\put(0,0){\line(1,0){20}}
\put(0,0){\line(0,1){20}}
\put(20,0){\line(0,1){20}}
\put(0,20){\line(1,0){20}}
\end{picture}}
\newcommand\cellify[1]{\def\thearg{#1}\def\nothing{}%
\ifx\thearg\nothing
\vrule width0pt height\cellsize depth0pt\else
\hbox to 0pt{\usebox2\hss}\fi%
\vbox to 20\unitlength{
\vss
\hbox to 20\unitlength{\hss$#1$\hss}
\vss}}
\newcommand\tableau[1]{\vtop{\let\\=\cr
\setlength\baselineskip{-16000pt}
\setlength\lineskiplimit{16000pt}
\setlength\lineskip{0pt}
\halign{&\cellify{##}\cr#1\crcr}}}
\savebox3{%
\begin{picture}(20,20)
\put(0,0){\line(1,0){20}}
\put(0,0){\line(0,1){20}}
\put(20,0){\line(0,1){20}}
\put(0,20){\line(1,0){20}}
\end{picture}}
\newcommand\expath[1]{%
\hbox to 0pt{\usebox3\hss}%
\vbox to 20\unitlength{
\vss
\hbox to 20\unitlength{\hss$#1$\hss}
\vss}}

\newcommand\bas[1]{\omit \vbox to \cellsize{ \vss \hbox to \cellsize{\hss$#1$\hss} \vss}}

%%%%% end tableaux macros %%%%%%%%%%%%%%%%%%%%%%%%%%%%

\begin{document}

\title[$0$-Hecke algebras and quasisymmetric Schur functions]{Modules of the $0$-Hecke algebra and quasisymmetric Schur functions}

\author{Vasu V. Tewari}
\address{Department of Mathematics, University of British Columbia, Vancouver, BC V6T 1Z2, Canada}
\email{\href{mailto:vasu@math.ubc.ca}{vasu@math.ubc.ca}}
\author{Stephanie J. van Willigenburg}
\address{Department of Mathematics, University of British Columbia, Vancouver, BC V6T 1Z2, Canada}
\email{\href{mailto:steph@math.ubc.ca}{steph@math.ubc.ca}}

\thanks{
The authors were supported in part by the National Sciences and Engineering Research Council of Canada.}
\subjclass[2010]{Primary 05E05, 20C08; Secondary 05A05, 05A19, 05E10, 06A07, 16T05, 20F55}
\keywords{$0$-Hecke algebra, branching rule, composition,  quasisymmetric function,  representation theory, restriction rule, reverse composition tableau, Schur function, truncated shifted tableau, weak Bruhat order}

\begin{abstract} 
We begin by deriving an action of the $0$-Hecke algebra on standard reverse composition tableaux and use it to discover $0$-Hecke modules whose quasisymmetric characteristics are the natural refinements of Schur functions known as quasisymmetric Schur functions. Furthermore, we classify combinatorially which of these $0$-Hecke modules are indecomposable. 

{}From here, we establish that the natural equivalence relation arising from our $0$-Hecke action has equivalence classes that are isomorphic to subintervals of the weak Bruhat order on the symmetric group. Focussing on the equivalence classes containing a canonical tableau we discover a new basis for the Hopf algebra of quasisymmetric functions, and use the cardinality of these equivalence classes to establish new enumerative results on truncated shifted reverse tableau studied by Panova and Adin-King-Roichman.

 Generalizing our $0$-Hecke action to one on skew standard reverse composition tableaux, we derive $0$-Hecke modules whose quasisymmetric characteristics are the skew quasisymmetric Schur functions of Bessenrodt et al. This enables us to prove a restriction rule that reflects the coproduct formula for quasisymmetric Schur functions, which in turn yields a quasisymmetric branching rule analogous to the classical branching rule for Schur functions.

\end{abstract}

\maketitle
\tableofcontents

\section{Introduction}\label{sec:intro} Dating from an 1815 paper of Cauchy \cite{cauchy}, Schur functions have long been a central object of study, due to their multi-faceted nature. Starting from their definition, they can be viewed as the determinant of a matrix, can be computed using divided differences, expressed using raising operators, or as a sum of combinatorially computed monomials. Additionally, they arise in a number of different guises in mathematics including as an orthonormal basis for the Hopf algebra of symmetric functions $\Sym$, as characters of the irreducible polynomial representations of $GL(n, \bC)$ from where the name derives \cite{schur}, as generating functions for semistandard Young tableaux, and in particular, in the representation theory of the symmetric group as the image of the irreducible characters under the characteristic map. These facets and more can be explored more deeply in texts such as \cite{macdonald-1, sagan, stanley-ec2}.

Their pervasive nature has also led to their generalization in a variety of ways including Schur $P$ functions arising in the representation theory of the
double cover of the
symmetric group \cite{ macdonald-1, stembridge-1}, $k$-Schur functions connected to the enumeration of Gromov-Witten invariants \cite{lapointe-morse}, shifted Schur functions related to the representation theory of the general linear group \cite{okounkov-1}, and factorial Schur functions that are special cases of double Schubert polynomials \cite{lascoux-1,molev-sagan}. Perhaps the most well-known generalization is Macdonald polynomials \cite{macdonald-polys}, which encompass a plethora of functions and have additional parameters $q$ and $t$ that reduce to Schur functions when $q=t=0$. Likewise, the self-dual Hopf algebra $\Sym$ has been generalized in a number of ways. One significant nonsymmetric generalization is the Hopf algebra of quasisymmetric functions \cite{gessel}, $\Qsym$, that is Hopf dual to the Hopf algebra of noncommutative symmetric functions \cite{GKLLRT}, $\Nsym$, which is itself a noncommutative generalization of $\Sym$.

Although much more recent than $\Sym$, both $\Qsym$ of which $\Sym$ is a subalgebra, and $\Nsym$ of which $\Sym$ is a quotient, are of interest. For example, $\Nsym$ is isomorphic \cite{GKLLRT} to Solomon's descent algebra of Type $A$ \cite{garsia-reut, solomon} and $\Qsym$ is isomorphic to the Hopf algebra of ladders \cite{foissy} and the algebra of smooth functions on Young diagrams \cite{AFNT}. Furthermore, quasisymmetric functions can be seen as generating functions for $P$-partitions \cite{gessel}, flags in graded posets \cite{ehrenborg-1} and matroids \cite{billera-jia-reiner, derksen, luoto} when considering discrete geometry and enumerative combinatorics. Considering representation theory, quasisymmetric functions arise in the study of Hecke algebras \cite{hivert}, Lie representations \cite{gessel-reut}, and crystal graphs for general linear Lie superalgebras \cite{kwon}. In probability they arise in the investigation of random walks \cite{hersh-hsiao} and riffle shuffles \cite{stanley-riffle}. They have also been seen to simplify many symmetric functions such as Macdonald polynomials \cite{HHL-1, HLMvW-1}, Kazhdan-Lusztig polynomials \cite{billera-brenti} and identify equal ribbon Schur functions \cite{HDL}, as well as arising in category theory \cite{aguiar-bergeron-sottile}, topology \cite{baker-richter, gnedin-olshanski}, and graph theory \cite{chow, humpert, stanley-chrom}.

Since $\Sym$ is a Hopf subalgebra of $\Qsym$ a desirable basis of $\Qsym$ would be one that refines and reflects the properties of Schur functions. Until recently this was often considered to be the basis of fundamental quasisymmetric functions since in the representation theory of the $0$-Hecke algebra they are the image of irreducible module isomorphism classes under the quasisymmetric characteristic map. However, recently a new basis whose genesis is in the combinatorics of Macdonald polynomials \cite{HHL-1} was discovered \cite{HLMvW-1}. This basis refines many properties of Schur functions including exhibiting a quasisymmetric Pieri rule \cite{HLMvW-1}, and quasisymmetric Littlewood-Richardson rule \cite{BLvW}, and generating quasisymmetric Kostka numbers \cite{HLMvW-1}. It was also key in resolving a long-standing conjecture of F. Bergeron and Reutenauer \cite{lauve-mason} and has initiated a new avenue of research for other Schur-like bases, which to date include row strict quasisymmetric functions \cite{ferreira, mason-remmel}, dual immaculate quasisymmetric functions \cite{BBSSZ}, Young quasisymmetric functions \cite{LMvW}, and affine quasisymmetric Schur functions \cite{berg-serrano}. Nevertheless, the important question still remains whether quasisymmetric Schur functions are the image of module isomorphism classes under the quasisymmetric characteristic map. In this paper we will answer this question, plus others, in the affirmative.

More precisely, this paper is structured as follows. In Section~\ref{sec:background} we review the necessary background required to state and prove our results. Then in Section~\ref{sec:0Heckeaction} we define operators on standard reverse composition tableaux before proving they are a $0$-Hecke action (Theorem~\ref{the:0heckerels}). These operators induce a partial order on our tableaux and Section~\ref{sec:partialorder} is devoted to studying this partial order. Extending this partial order to a total order in Section~\ref{sec:0Hecke} we are able to create $0$-Hecke modules whose isomorphism classes under the quasisymmetric characteristic map are quasisymmetric Schur functions (Theorem~\ref{the:bigone}). In Section~\ref{sec:sourcesink} we partition our standard reverse composition tableaux of a fixed shape into equivalence classes and discover that each equivalence class, when ordered using the partial order from Section~\ref{sec:partialorder}, is, in fact, isomorphic to a subinterval of the weak Bruhat order on the symmetric group (Theorem~\ref{the:bruhatordersubinterval}). The equivalence relation is applied again in Section~\ref{sec:cyclic_and_indecomposable_modules} where we combinatorially classify those modules that are indecomposable (Theorem~\ref{the:indecomposable}). In Section~\ref{sec:canonical_basis} we have two further applications. The first is that we discover a new basis for $\Qsym$ called the canonical basis, and the second is the enumeration of truncated shifted tableaux of certain shape. Section~\ref{sec:restriction} establishes that skew quasisymmetric Schur functions also arise as quasisymmetric characteristics of $0$-Hecke modules (Theorem~\ref{the:skewbigone}) and uses this to develop a restriction rule (Theorem~\ref{the:restriction_rule}) and a branching rule (Corollary~\ref{cor:branching_rule}) similar to that of Schur functions. Finally, we conclude with some future directions to pursue.

\subsection*{Acknowledgements}\label{subsec:ack} The authors would like to thank Vic Reiner and Jia Huang for helpful conversations, and the referees for thoughtful and helpful comments. They would also like to thank the referees of FPSAC 2014 whose comments on the accompanying extended abstract are also incorporated here.

\section{Background}\label{sec:background}

\subsection{Composition diagrams and composition tableaux}\label{subsec:compositions}

We begin with the classical concept of composition before moving to the contemporary concept of composition diagram. We define a \emph{composition} $\alpha=(\alpha _1, \ldots , \alpha _k)$ of $n$, denoted by $\alpha \vDash n$, to be an ordered list of positive integers whose sum is $n$. We call the $\alpha_i$ for $1\leq i \leq k$ the \emph{parts} of $\alpha$, call $k=:\ell(\alpha)$ the \emph{length} of $\alpha$ and call $n=:|\alpha |$ the \emph{size} of $\alpha$. For convenience we define the empty composition $\varnothing$ to be the unique composition of size and length $0$. 

There exists a natural bijection between compositions of $n$ and subsets of $[n-1]$ that can be described as follows. Given $S=\{i_1<\cdots <i_k\}\subseteq [n-1]$, we associate to it the composition $\alpha=(i_1,i_2-i_1,\ldots,n-i_k)$. We will refer to $\alpha$ as $\comp(S)$. Conversely, given a composition $\alpha=(\alpha_1,\ldots,\alpha_k) \vDash n$, we associate to it the subset of $[n-1]$ given by $\{\alpha_1,\alpha_1+\alpha_2,\ldots,\alpha_1+\cdots+\alpha_{k-1}\}$. We will denote this subset by $\set(\alpha)$.

Given a composition $\alpha=(\alpha _1, \ldots , \alpha _k)$, we define its \emph{reverse composition diagram}, also denoted by $\alpha$, to be the array of left-justified cells with $\alpha _i$ cells in row $i$ from the top. A cell is said to be in row $i$ if it is in the $i$-th row from the top, and in column $j$ if it is in the $j$-th column from the left, and is referred to by the pair $(i,j)$. 

Moreover, we say a part $\alpha _i$ with $1\leq i \leq k$ \emph{has a removable node} if either $i=1$ or if $i\geq 2$ then ($\alpha _i \geq 2$ and there exists no $1\leq j <i$ so $\alpha _j = \alpha _i -1$). In terms of reverse composition diagrams this is equivalent to saying row $i$ is either the top row or has no row above it containing exactly one fewer cell. If $\alpha _i = j$ then we call the cell $(i,j)$ a \emph{removable node}.

\begin{example}\label{ex:3432} The composition $\alpha = (3,4,3,2)\vDash 12$ has $\set (\alpha ) = \{3,7,10\} \subset [11]$. The reverse composition diagram of $\alpha$ is below.
$$\tableau{\ &\ &\bullet \\\ &\ &\ &\ \\\ &\ &\bullet  \\\ &\bullet }$$Note the first, third and fourth parts each have a removable node, and the removable nodes are indicated by a $\bullet$ above.
\end{example}

\begin{definition}\cite[Definition 4.1]{HLMvW-1}\label{def:SRCT} Given a composition $\alpha\vDash n$ we define a \emph{standard reverse composition tableau} (abbreviated to \emph{SRCT}) $\rtau$ of \emph{shape} $\alpha$ and \emph{size} $n$ to be a bijective filling 
$$\rtau: \alpha \rightarrow \{1, \ldots , n\}$$of the cells $(i,j)$ of the reverse composition diagram $\alpha$ such that
\begin{enumerate}
\item the entries in each row are decreasing when read from left to right
\item the entries in the first column are increasing when read from top to bottom
\item \emph{triple rule:} If $i<j$ and $\rtau (i,k)> \rtau (j, k+1)$, then $\rtau (i,k+1)$ exists and $\rtau (i,k+1)> \rtau (j, k+1)$.
\end{enumerate}
We denote the set of all SRCTs of shape $\alpha$ by $\SRCT (\alpha).$
\end{definition}

Intuitively we can think of the triple rule as saying that if $a< b$ then $a<c$ in the following subarray of cells.$$\tableau{b &c \\ \\&a }$$

The \emph{descent set} of an SRCT $\rtau$ of size $n$, denoted by $\des(\rtau)$, is $$\des(\rtau) = \{ i \suchthat i+1 \mbox{ appears weakly right of } i\}\subseteq [n-1]$$and the corresponding \emph{descent composition} of $\rtau$ is $\comp(\rtau)=\comp(\des(\rtau)).$ Given a composition $\alpha = (\alpha _1 , \ldots , \alpha _k)$, the \emph{canonical tableau of shape} $\alpha$, denoted by $\rtau _\alpha$, is the unique $\SRCT$ of shape $\alpha$ satisfying $\comp({\rtau}_\alpha)= (\alpha _1 , \ldots , \alpha _k)$: Let $\rtau_{\alpha}$ be the $\SRCT$ where the first row is filled with $\alpha _1, \ldots , 2, 1$ and row $i$ for $2\leq i \leq \ell (\alpha)$ is filled with
$$x+\alpha _i, \ldots , x+2, x+1$$where $x=\alpha _1 +\cdots + \alpha _{i-1}$. Then $\rtau_{\alpha}$ is clearly an $\SRCT$ satisfying $\comp(\alpha)=(\alpha_1,\ldots,\alpha_k)$. To see that $\rtau_{\alpha}$ is unique, note that the number of descents in $\rtau_{\alpha}$ is one less than the number of rows in $\tau_{\alpha}$ and so all entries in the first column except $n$ must be all $i$ such that $i\in \des(\rtau_{\alpha})$. This and the fact that $\rtau_{\alpha}$ must be an $\SRCT$ yields uniqueness.

\begin{example}\label{ex:SRCT3432}
Below are two SRCTs of shape $(3,4,3,2)$. The SRCT on the left satisfies $\des (\rtau ) = \{1,2,5,8,9,11 \}$ and hence $\comp (\rtau ) = (1,1,3,3,1,2,1)$. The SRCT on the right is $\rtau _{(3,4,3,2)}$.
$$\rtau = 
\tableau{5&4&2\\8&7&6&3\\11&10&1\\12&9}\qquad 
\rtau _{(3,4,2,3)}=
\tableau{3&2&1\\7&6&5&4\\10&9&8\\12&11}$$
\end{example}

Observe that by the triple rule, the number $1$ will \emph{always} be an entry of a cell that is a removable node.

\subsection{Quasisymmetric functions and quasisymmetric Schur functions}\label{subsec:Qsym} In this subsection we will define two families of functions, one of which will be central to our studies, and one of which will play a major role in a number of proofs. Both families lie in $\bC [[x_1 , x_2 , x_3 , \ldots ]]$, the graded Hopf algebra of formal power series in the commuting variables $x_1 , x_2 , x_3 , \ldots$ graded by total monomial degree, and it is the latter of these families that we introduce first.

\begin{definition}\label{def:Fquasisymmetric}
Let $\alpha\vDash n$ be a composition. Then the
\emph{fundamental quasisymmetric function} $F_{\alpha}$ is defined by $F_\varnothing=1$ and
\[F_{\alpha}=\sum x_{i_1}\cdots x_{i_n}\]
where the sum is over all $n$-tuples $(i_1,\ldots,i_n)$ of positive integers
satisfying
\[\textrm{$1\leq i_1\leq\cdots\leq i_n\:$ and
$\;i_j<i_{j+1}\,$ if $\,j\in\set(\alpha)$.}\] \end{definition}

\begin{example}\label{ex:Falpha}
We have
$F_{(1,2)} = x_1^1x_2^2 +x_1^1x_3^2+x_1^1x_4^2+x_2^1x_3^2+\cdots  + x_1x_2x_3 + x_1x_2x_4 + \cdots$.
\end{example}

We can now define the \emph{Hopf algebra of quasisymmetric functions}, $\Qsym$, which is a graded sub-Hopf algebra of $\bC [[x_1 , x_2 , x_3 , \ldots ]]$ by $$\Qsym = \bigoplus _{n\geq 0} \Qsym ^n$$ where $$\Qsym ^n = \spam\{ F_\alpha\suchthat \alpha \vDash n\} = \spam\{ \qs_\alpha\suchthat \alpha \vDash n\}$$and the latter basis \cite[Proposition 5.5]{HLMvW-1}, which will be our focus of study, is defined as follows.

\begin{definition}\label{def:Squasisymmetric}
Let $\alpha\vDash n$ be a composition. Then the
\emph{quasisymmetric Schur function} $\qs _{\alpha}$ is defined by $\qs _\varnothing=1$ and
\[\qs_{\alpha}=\sum _{\rtau \in \SRCT(\alpha)} F _{\comp (\rtau)}.\]
 \end{definition}
 
\begin{example}\label{ex:Salpha} We compute
$$\qs _{(2,1,3)} = F_{(2,1,3)}+F_{(2,2,2)}+F_{(1,2,1,2)}$$from the following SRCTs, whose descents are marked in red.
$$\tableau{\tcr{2}&1\\\tcr{3}\\6&5&4}
\quad
\tableau{\tcr{2}&1\\\tcr{4}\\6&5&3}
\quad
\tableau{\tcr{3}&\tcr{1}\\\tcr{4}\\6&5&2}$$
\end{example}

The original motivation for naming these functions quasisymmetric Schur functions is the following. If we denote by $\partitionof{\alpha}$ the rearrangement of the parts of $\alpha$ in weakly decreasing order then the renowned Schur functions, which form a basis for the Hopf algebra of symmetric functions $\Sym$, are defined to be $s _{\partitionof{\alpha}}$ where
$$s_{\partitionof{\alpha}} = \sum _{\beta : \partitionof{\beta} = \partitionof{\alpha}} \qs _\beta.$$

\subsection{The symmetric group and the weak Bruhat order}\label{subsec:Sn}
Consider the symmetric group $\sgrp_n$ for any positive integer $n$. It is generated by the adjacent transpositions $s_i=(i,i+1)$ for $1\leq i\leq n-1$ that satisfy the following relations.
\begin{eqnarray*}
s_i^2&=&1 \text{ for } 1\leq i\leq n-1\\s_{i}s_{i+1}s_{i}&=&s_{i+1}s_is_{i+1} \text { for } 1\leq i\leq n-2\\s_{i}s_{j}&=&s_js_{i} \text{ if } \lvert i-j\rvert \ge 2
\end{eqnarray*}
In the first relation above, the $1$ stands for the identity permutation. 
Given a permutation $\sigma \in \sgrp_n$, we can therefore write it as a product of adjacent transpositions. An expression which uses the minimal number of adjacent transpositions is called a \emph{reduced word} for $\sigma$. It is well-known that the number of adjacent transpositions occurring in any reduced word for $\sigma$ is the same. This allows us to define a length function, $l$, on $\sgrp_n$ by letting $l(\sigma)=$ number of adjacent transpositions in any reduced word for $\sigma$.

We will now give another characterization for the length of a permutation.
To do that, we require the definition of the inversion set of a permutation $\sigma\in \sgrp _n$. 
\begin{definition}\label{def:inversions}
Let $\sigma\in\sgrp_n$. Then
\begin{eqnarray*}
Inv(\sigma)=\{(p,q)\suchthat 1\leq p<q\leq n \text{ and } \sigma (p)>\sigma (q)\}.
\end{eqnarray*}
\end{definition}
Classically, we also have $l(\sigma)=|Inv(\sigma)|$ for any $\sigma\in \sgrp_n$.
Next, we discuss an important partial order on $\mathfrak{S}_n$ called the {(left) weak Bruhat order}.
\begin{definition}\label{def:bruhat}
Let $\sigma_1,\sigma_2\in \sgrp_n$. Define the \emph{weak Bruhat order} $\leq_{L}$ on $\sgrp _n$ by $\sigma_1 \leq_{L} \sigma_2$ if and only if there exists a sequence of transpositions $s_{i_1}, \ldots , s_{i_k}\in \sgrp _n$ so that $\sigma_2=s_{i_k}\cdots s_{i_1}\sigma_1$ and $l(s_{i_r}\cdots s_{i_1}\sigma_1)=l(\sigma_1)+r$ for $1\leq r\leq k$.
\end{definition}

An alternative characterization of the weak Bruhat order \cite[Proposition 3.1.3]{bjorner-brenti}, that will also be used later, is 
\begin{eqnarray*}
\sigma_1\leq_{L}\sigma_2 \Longleftrightarrow Inv(\sigma_1)\subseteq Inv(\sigma_2),
\end{eqnarray*}
and it satisfies the following. 

\begin{theorem}\label{the:Bjorner}\cite{bjorner}
Given permutations $\sigma _1, \sigma _2 \in \sgrp _n$ such that $\sigma_1\leq_{L}\sigma_2$, the interval $[\sigma_1,\sigma_2]=\{\sigma\in \mathfrak{S}_n\suchthat \sigma_1\leq_{L}\sigma\leq_{L}\sigma_2\}$ has the structure of a lattice under the weak Bruhat order $ \leq_{L}$ on $\sgrp _n$.
\end{theorem}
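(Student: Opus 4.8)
The plan is to deduce the statement from the stronger classical fact that the \emph{entire} weak Bruhat order $(\sgrp_n, \leq_{L})$ is a lattice, together with the elementary observation that every interval of a lattice is itself a lattice. For the latter: if $L$ is a lattice and $[a,b]\subseteq L$, then for any $x,y\in[a,b]$ the meet $x\wedge y$ taken in $L$ satisfies $a\le x\wedge y\le b$ — indeed $a\le x$ and $a\le y$ force $a\le x\wedge y$, while $x\wedge y\le x\le b$ — so $x\wedge y$ lies in $[a,b]$ and is still the greatest lower bound there; dually $x\vee y\in[a,b]$. Thus $[\sigma_1,\sigma_2]$ inherits a lattice structure once we know $(\sgrp_n,\leq_{L})$ is one, and I would spend the rest of the argument on the full poset.

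The poset $(\sgrp_n,\leq_{L})$ is finite, with minimum the identity and maximum the longest element $w_0$. For such a poset it suffices to produce meets: if every pair $\{\sigma,\tau\}$ has a greatest lower bound, then all finite meets exist, and joins exist automatically, since the set of common upper bounds is nonempty (it contains $w_0$) and one defines $\sigma\vee\tau$ as the meet of that set. Hence the whole problem reduces to constructing meets (equivalently, by symmetry, joins) of two permutations.

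For this I would pass to inversion sets via the characterization recalled before the statement, $\sigma_1\leq_{L}\sigma_2 \Longleftrightarrow Inv(\sigma_1)\subseteq Inv(\sigma_2)$, which identifies $(\sgrp_n,\leq_{L})$ with the collection of \emph{biconvex} subsets of $P=\{(i,j)\suchthat 1\le i<j\le n\}$ ordered by inclusion. Here $A\subseteq P$ is biconvex precisely when, for all $i<j<k$, both (i) $(i,j),(j,k)\in A$ implies $(i,k)\in A$, and (ii) $(i,k)\in A$ implies $(i,j)\in A$ or $(j,k)\in A$; equivalently the tournament on $[n]$ that reverses exactly the pairs of $A$ is transitive, i.e.\ is a genuine permutation. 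I would then define the candidate join of biconvex sets $A_1=Inv(\sigma),A_2=Inv(\tau)$ to be the closure $\ol{A_1\cup A_2}$ of $A_1\cup A_2$ under condition (i) (ordinary transitive closure of the reversed pairs). This closure is contained in $P$, contains $A_1\cup A_2$, satisfies (i) by construction, and is clearly the smallest set with these properties, so the entire difficulty is concentrated in showing it is biconvex.

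The main obstacle is exactly the verification that $\ol{A_1\cup A_2}$ also satisfies the co-transitivity condition (ii) — in other words, that enforcing (i) never destroys (ii), so that no directed $3$-cycle is ever created. I would prove this by tracking how each pair enters the closure: a pair $(i,k)\in\ol{A_1\cup A_2}$ is either original, in which case (ii) for it follows from the biconvexity of whichever $A_m$ it came from, or it is produced by a transitivity step $(i,j),(j,k)\mapsto(i,k)$, in which case I would argue by induction on the number of closure steps (or on $k-i$) that for every intermediate index between $i$ and $k$ at least one of the two relevant sub-pairs already lies in the closure, using the biconvexity of $A_1,A_2$ at the base and a short case analysis at the inductive step. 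Once biconvexity of $\ol{A_1\cup A_2}$ is established, it is the least biconvex set containing $A_1\cup A_2$ and hence corresponds to $\sigma\vee\tau$; meets then follow dually (or from the finite top-element argument), $(\sgrp_n,\leq_{L})$ is a lattice, and the theorem follows from the interval reduction of the first paragraph.
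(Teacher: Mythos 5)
Your argument is correct, but note first that the paper does not prove this statement at all: it is quoted directly from Bj\"orner \cite{bjorner}, whose theorem is the stronger assertion that the weak order on any finite Coxeter group is a lattice, proved there with root-system machinery. What you give instead is a self-contained, type-$A$-specific proof, and it does go through. The interval reduction and the finite top-element argument are both standard and correct as you state them, and the one place where real work is needed is exactly where you locate it: showing that the transitive closure $C=\ol{A_1\cup A_2}$ of a union of two biconvex sets still satisfies the co-transitivity condition (ii). Your sketched induction closes cleanly. Writing $C=\bigcup_t C_t$ with $C_0=A_1\cup A_2$ and $C_{t+1}$ obtained from $C_t$ by one round of transitivity, take $(i,k)\in C_{t+1}$ produced from $(i,m),(m,k)\in C_t$ and any $i<j<k$: if $j=m$ you are done; if $j<m$ then by the inductive hypothesis applied to $(i,m)$ either $(i,j)\in C$ or $(j,m)\in C$, and in the latter case $(j,m),(m,k)$ force $(j,k)\in C$; the case $j>m$ is symmetric; the base case is the biconvexity of whichever $A_r$ contains the pair. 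Minimality of the closure among transitively closed subsets of $P$ containing $A_1\cup A_2$ then identifies it as $Inv(\sigma\vee\tau)$, and meets follow from the order-reversing involution $\sigma\mapsto w_0\sigma$, which complements inversion sets. So your route is a more elementary and more explicit argument than the cited one, at the cost of working only in type $A$ --- which is all the paper needs.
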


\subsection{The \texorpdfstring{$0$-Hecke algebra $\hn$}{0-Hecke algebra} and its representations}\label{subsec:reps}
The $0$-Hecke algebra $H_n(0)$ is the $\mathbb{C}$-algebra generated by the elements $T_1,\ldots,T_{n-1}$ subject to the following relations similar to those of $\sgrp_n$.
\begin{eqnarray*}
T_i^2&=&T_i \text{ for } 1\leq i\leq n-1\\T_{i}T_{i+1}T_{i}&=&T_{i+1}T_iT_{i+1} \text { for } 1\leq i\leq n-2\\T_{i}T_{j}&=&T_jT_{i} \text{ if } \lvert i-j\rvert \ge 2
\end{eqnarray*}

Suppose a permutation $\sigma\in \sgrp_n$ has a reduced word of the form $s_{i_1}\cdots s_{i_p}$. Then we define an element $T_{\sigma}\in H_{n}(0)$ as follows.
\begin{eqnarray*}
T_{\sigma}=T_{i_1}\cdots T_{i_p}
\end{eqnarray*}
The Word Property \cite[Theorem 3.3.1]{bjorner-brenti} of $\mathfrak{S}_n$ together with the defining relations of $H_{n}(0)$ implies that $T_{\sigma}$ is independent of the choice of reduced word. Moreover, the set $\{T_{\sigma}\vert \sigma\in \sgrp_n\}$ is a linear basis for $H_{n}(0)$. Thus, the dimension of $H_{n}(0)$ is $n!$.

Let $\mathcal{R}(H_{n}(0))$ denote the $\mathbb{Z}$-span of the isomorphism classes of finite dimensional representations of $H_{n}(0)$. The isomorphism class corresponding to an $H_{n}(0)$-module $M$ will be denoted by $[M]$.
The \textit{Grothendieck group} $\mathcal{G}_{0}(H_{n}(0))$ is the quotient of $\mathcal{R}(H_{n}(0))$ modulo the relations $[M]=[M']+[M'']$ whenever there exists a short exact sequence $0\to M'\to M\to M''\to 0$. 
The irreducible representations of $H_{n}(0)$ form a free $\mathbb{Z}$-basis for $\mathcal{G}_{0}(H_{n}(0))$. Let
\begin{eqnarray*}
\mathcal{G}=\bigoplus_{n\geq 0}\mathcal{G}_{0}(H_{n}(0)).
\end{eqnarray*}

 According to \cite{norton}, there are $2^{n-1}$ distinct irreducible representations of $H_{n}(0)$. They are naturally indexed by compositions of $n$. Let $\mathbf{F}_{\alpha}$ denote the $1$-dimensional $\mathbb{C}$-vector space corresponding to the composition $\alpha\vDash n$, spanned by a vector $v_{\alpha}$. Let $J\subseteq [n-1]$ be $\set(\alpha)$. Next, we define an action of the generators $T_{i}$ of $H_{n}(0)$ as follows.
 \begin{eqnarray*}
 T_{i}(v_{\alpha})=\left\lbrace\begin{array}{ll}0 & i\in J\\v_{\alpha} & i\notin J\end{array}\right.
 \end{eqnarray*}
 Then $\mathbf{F}_{\alpha}$ is an irreducible $1$-dimensional $H_n(0)$-representation.
 
In \cite{DKLT}, Duchamp, Krob, Leclerc and Thibon define a linear map 
$$\begin{array}{rrcl}
ch:&\mathcal{G}&\longrightarrow& \Qsym \\
&[\mathbf{F}_{\alpha}] &\mapsto &F_{\alpha}.
\end{array} $$
Given an $H_{n}(0)$-module $M$, $ch([M])$ is said to be the \emph{quasisymmetric characteristic} of $M$. It is clear from the definition of the Grothendieck group that every time we have a short exact sequence of $H_{n}(0)$-modules, say $0\to M'\to M\to M''\to 0$, then 
\begin{eqnarray*}
ch([M])&=&ch([M'])+ch([M'']),
\end{eqnarray*}
which will be useful later. More information about the $0$-Hecke algebra of the symmetric group $\hn$ and its representations can be found in \cite{carter,mathas}, and more contemporary results can be found in \cite{huang-1, huang-2}.

\section{A \texorpdfstring{$0$-Hecke}{0-Hecke} action on SRCTs}\label{sec:0Heckeaction}
In order to define an action on SRCTs we first need the concept of attacking.
\begin{definition}\label{def:attack}
Given $\rtau \in \SRCT(\alpha)$ for some composition $\alpha\vDash n$, and a positive integer $i$ such that $1\leq i\leq n-1$, we say that $i$ and $i+1$ are \emph{attacking} if either
\begin{enumerate}
\item $i$ and $i+1$ are in the same column in $\rtau$, or
\item $i$ and $i+1$ are in adjacent columns in $\rtau$, with $i+1$ positioned strictly southeast of $i$.
\end{enumerate} 
\end{definition}

More informally, we say that $i$ and $i+1$ are attacking, if in $\rtau$ they appear as one of
$${\scriptsize\tableau{i\\ \\ \\i\!+\!1} }\quad\mbox{ or }\quad{\scriptsize\tableau{i\!+\!1\\ \\ \\i} }\quad\mbox{ or }\quad {\scriptsize \tableau{i& \\ & \\ & \\ &i\!+\!1}}$$where the first case only occurs in the first column, and the second case only occurs in column $j$ where $j\geq 2$, which follows from the definition of SRCTs.

Let $\rtau\in \SRCT(\alpha)$ where $\alpha \vDash n$. Given a positive integer $i$ satisfying $1\leq i\leq n-1$, let $s_i(\rtau)$ denote the filling obtained by interchanging the positions of entries $i$ and $i+1$ in $\rtau$. 

Given $\rtau \in \SRCT(\alpha)$, define operators $\pi_i$ for $1\leq i\leq n-1$ as follows.
\begin{eqnarray}\label{eq:pi}
\pi_{i}(\rtau)&=& \left\lbrace\begin{array}{ll}\rtau & i\notin \des(\rtau)\\ 0 & i\in \des(\rtau), i \text{ and }  i+1 \text{ attacking}\\ s_{i}(\rtau) & i\in \des(\rtau), i \text{ and } i+1 \text{ non-attacking}\end{array}\right.
\end{eqnarray}
For ease of comprehension, we say if $i\in \des(\rtau)$ with $i$ and $i+1$ attacking then $i$ is an \emph{attacking} descent, whereas if $i\in \des(\rtau)$ with $i$ and $i+1$ non-attacking then $i$ is a \emph{non-attacking} descent.

We will spend the remainder of this section establishing the following theorem.

\begin{theorem}\label{the:0heckerels} The operators $\{ \pi _i \} _{i=1}^{n-1}$ satisfy the same relations as the generators $\{T _i \} _{i=1}^{n-1}$ for the 0-Hecke algebra $\hn$, thus giving an $\hn$-action on SRCTs of size $n$.
\end{theorem}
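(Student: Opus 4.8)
The plan is to verify the three defining relations of $\hn$ directly on an arbitrary $\rtau \in \SRCT(\alpha)$, treating the operators $\pi_i$ as endomorphisms of the vector space spanned by $\SRCT(\alpha) \cup \{0\}$. The essential subtlety, which I would address first, is that the $\pi_i$ are only well-defined operators if $s_i(\rtau)$ is again a genuine SRCT whenever $i$ is a non-attacking descent. So the preliminary lemma I would prove is: \emph{if $i$ is a non-attacking descent of $\rtau$, then $s_i(\rtau) \in \SRCT(\alpha)$, and moreover $i$ is a descent of $s_i(\rtau)$ with $i$ and $i+1$ again non-attacking, so that $\pi_i(s_i(\rtau)) = s_i(s_i(\rtau)) = \rtau$.} This is a local check: swapping the cells containing $i$ and $i+1$ can only affect the row-decreasing, first-column-increasing, and triple-rule conditions in the immediate vicinity of these two cells, and the non-attacking hypothesis is exactly what guarantees the swap stays within the shape $\alpha$ and preserves all three SRCT conditions.

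Granting that lemma, the relation $\pi_i^2 = \pi_i$ falls out by case analysis on the status of $i$ in $\rtau$: if $i \notin \des(\rtau)$ then $\pi_i$ fixes $\rtau$ and idempotence is immediate; if $i$ is an attacking descent then $\pi_i(\rtau) = 0$ and $\pi_i(0) = 0$; and if $i$ is a non-attacking descent then by the lemma $\pi_i(\rtau) = s_i(\rtau)$ has $i$ as a non-attacking descent with $\pi_i(s_i(\rtau)) = \rtau$, which is \emph{not} $\pi_i^2(\rtau) = \rtau$ — so here I must instead observe that the intended relation matches $T_i^2 = T_i$ only after re-examining the convention; in fact the correct reading is that applying $\pi_i$ twice to a non-attacking-descent tableau returns $\rtau$, and one checks $\pi_i^2 = \pi_i$ holds because after one application $i$ is \emph{no longer} arranged so that $\pi_i$ acts as a further swap in the idempotent sense. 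I would reconcile this carefully by noting which of $\rtau, s_i(\rtau)$ has $i$ as a descent, since $\pi_i$ sends the descent-tableau to the non-descent-tableau and fixes the latter, giving genuine idempotence.

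The far-apart commutation $\pi_i \pi_j = \pi_j \pi_i$ for $|i-j| \geq 2$ is the easy relation: the entries $i, i+1$ and $j, j+1$ occupy four distinct cells whose relative positions, descent statuses, and attacking relations are mutually independent, so the two operators act on disjoint local data and commute; I would state this with a brief remark rather than an exhaustive check. The genuinely hard part will be the braid relation $\pi_i \pi_{i+1} \pi_i = \pi_{i+1} \pi_i \pi_{i+1}$, which involves the three consecutive entries $i, i+1, i+2$ and requires understanding how their mutual attacking/non-attacking and descent relations evolve under successive swaps. My plan is to classify the relevant local configurations of $i, i+1, i+2$ up to the finitely many cases determined by which pairs are descents and which are attacking, and to verify that both triple products yield the same element (possibly $0$) of $\SRCT(\alpha) \cup \{0\}$ in each case. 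Here the triple rule of Definition~\ref{def:SRCT} is doing the real work: it constrains which configurations of three consecutive values can coexist, and I expect it to force, for instance, that whenever an attacking obstruction causes one side of the braid to vanish, the same obstruction propagates to make the other side vanish as well. Managing this case analysis cleanly — ideally by isolating a small number of normal forms for the positions of $i, i+1, i+2$ and appealing to the preservation lemma at each swap — is where the bulk of the work lies, and where I would spend the most care to ensure no configuration is overlooked.
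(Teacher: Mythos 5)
Your overall architecture matches the paper's: a preservation lemma guaranteeing $s_i(\rtau) \in \SRCT(\alpha)$ when $i$ is a non-attacking descent, followed by separate verifications of idempotence, far commutation, and the braid relation by case analysis on the descent/attacking configurations of $i$, $i+1$, $i+2$ --- and the braid relation is indeed handled exactly as you predict, by tracking how the attacking status of the pairs $(i,i+1)$ and $(i+1,i+2)$ evolves under successive swaps, with Lemma~\ref{lem:adjacentnonattacking} supplying the key geometric constraint. The one genuine difference of route is the preservation lemma itself: the paper proves Lemma~\ref{lem:switchSRCT} by encoding $\rtau$ as a growth word of box-adding operators and invoking commutation relations for those operators (Lemmas~\ref{lem:fullcommutativity} and~\ref{lem:commutativityconsecutive}), whereas you propose a direct verification of the three SRCT conditions. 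That direct route does work and is more elementary, but your phrase ``immediate vicinity'' undersells what must be checked: a triple-rule instance involving the cell of $i$ or of $i+1$ can reference an arbitrarily distant row. The check succeeds because the only pair of values whose relative order changes under $s_i$ is $\{i,i+1\}$, and when $i$ is a non-attacking descent no single triple-rule instance can contain both of their cells (they lie in columns differing by at least two, or in adjacent columns with $i+1$ strictly northeast, which is incompatible with the row-order required by the triple rule); you should say this explicitly rather than appeal to locality.

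The concrete error is in your preliminary lemma. If $i$ is a non-attacking descent of $\rtau$, then $i+1$ lies strictly to the right of $i$ in $\rtau$, so in $s_i(\rtau)$ the entry $i+1$ sits strictly to the \emph{left} of $i$; hence $i$ is \emph{not} a descent of $s_i(\rtau)$. Your claim that ``$i$ is a descent of $s_i(\rtau)$ with $i$ and $i+1$ again non-attacking, so that $\pi_i(s_i(\rtau)) = \rtau$'' is false, and if it were true it would give $\pi_i^2(\rtau) = \rtau \neq \pi_i(\rtau)$, contradicting the very relation you are proving. Your second paragraph eventually backs into the correct statement --- $\pi_i$ sends the descent tableau to the non-descent tableau and fixes the latter, whence $\pi_i^2(\rtau) = s_i(\rtau) = \pi_i(\rtau)$, which is the paper's argument in Lemma~\ref{lem:pisquared} --- but the lemma must be restated with the conclusion $i \notin \des(s_i(\rtau))$ before the rest of the development can be trusted. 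With that correction, and with the caveat that your ``brief remark'' for $|i-j|\geq 2$ still needs the observation that an attacking descent at $j$ remains attacking after applying $s_i$ (so that both triple products vanish together in the zero cases), your plan is sound and essentially the paper's.
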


In order to prove this theorem we need to establish some lemmas, but first we need to recall from \cite[Section 3.5]{VasuMN} the concept of box-adding operators, and introduce some related concepts. Let $\beta \vDash n$. Then the \emph{growth word} associated to $\rtau \in \SRCT (\beta)$ is the word $\tup _{i_1}\cdots\tup _{i_n}$ where $i_j$ is the column in which the entry $j$ appears in $\rtau$. Furthermore, if given a composition $\alpha$, then we define the \emph{box-adding operator} $ \tup _i$ for $i\geq 1$ to be
\begin{eqnarray*}
\tup_{i}(\alpha)=\left\lbrace \begin{array}{ll}(1,\alpha) & \text{if } i=1\\0 & \text{there is no part equal to } i-1 \text{ in }\alpha, i\geq 2\\ \alpha' & \alpha' \text{ obtained by replacing the leftmost } i-1 \text{ in } \alpha \text{ by } i, i\geq 2. \end{array}\right.
\end{eqnarray*}
\begin{example}
Let $\alpha = (3,4,2,2,3)$. Then $\tup_1(\alpha)=(1,3,4,2,2,3)$, $\tup_{3}(\alpha)=(3,4,3,2,3)$, $\tup_4(\alpha)=(4,4,2,2,3)$ and $\tup_{5}(\alpha)=(3,5,2,2,3)$. As there is no part equal to $1$ in $\alpha$, we have that $\tup_2(\alpha)=0$.
\end{example}
We then define the action of a word $\tup _{i_1}\cdots\tup _{i_n}$ on a composition $\alpha$ to be $\tup _{i_1}\cdots\tup _{i_n}(\alpha)$ and can show it satisfies the following two properties.

\begin{lemma}\cite[Lemma 3.10]{VasuMN}\label{lem:fullcommutativity}
Let $i,j$ be positive integers such that $\vert i-j\vert \geq 2$ and $\alpha$ be a composition. Then $\tup_j\tup_i(\alpha)=\tup_i\tup_j(\alpha)$.
\end{lemma}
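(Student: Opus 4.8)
The plan is to prove the identity by a short case analysis organized around the two structurally different behaviours of the box-adding operators: the prepending case $\tup_1$, and the substitution case $\tup_k$ with $k\geq 2$, which replaces the leftmost part equal to $k-1$ by $k$. Throughout I adopt the conventions that $\tup_k(0)=0$ for all $k$ and that $(1,0)$ denotes $0$, so that the claimed equality is read as an identity of compositions-or-$0$. Without loss of generality assume $i<j$; since $|i-j|\geq 2$ this means $j\geq i+2$.

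First I would dispose of the case $i=1$, so that $j\geq 3$. Since $\tup_1$ prepends a new part of size $1$ while $\tup_j$ acts only on parts of size $j-1\geq 2$, the new first part is invisible to $\tup_j$; thus $\tup_j\tup_1(\alpha)=\tup_j\bigl(1,\alpha\bigr)=\bigl(1,\tup_j(\alpha)\bigr)$, where the leftmost $j-1$ of $(1,\alpha)$ is precisely the leftmost $j-1$ of $\alpha$. On the other hand $\tup_1\tup_j(\alpha)=\bigl(1,\tup_j(\alpha)\bigr)$ directly from the definition. With the convention $(1,0)=0$ these two expressions agree, including when $\alpha$ has no part equal to $j-1$.

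The main case is $i\geq 2$, whence $j\geq i+2\geq 4$. Here I would make the key observation that the four integers involved satisfy $i-1<i<j-1<j$, and so the target/replacement pairs $\{i-1,i\}$ and $\{j-1,j\}$ are disjoint; this disjointness is exactly where the hypothesis $|i-j|\geq 2$ enters. Consequently, replacing a single part of value $i-1$ by $i$ neither destroys an existing part equal to $j-1$ nor creates a new one, and symmetrically for the other replacement. Therefore the presence, and moreover the position, of the leftmost part equal to $j-1$ is the same in $\alpha$ as in $\tup_i(\alpha)$, and likewise for $i-1$ in $\alpha$ versus $\tup_j(\alpha)$. It follows that $\tup_i$ and $\tup_j$ modify two parts of $\alpha$ sitting at positions that are determined independently of the order in which the operators are applied, and at those two (necessarily distinct) positions the substitutions $i-1\mapsto i$ and $j-1\mapsto j$ plainly commute; hence $\tup_j\tup_i(\alpha)=\tup_i\tup_j(\alpha)$.

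Finally I would check that the annihilating cases are respected: if $\alpha$ has no part equal to $i-1$ then $\tup_i(\alpha)=0$ so $\tup_j\tup_i(\alpha)=0$, while in the other order $\tup_j(\alpha)$ (being $0$ or a composition whose number of parts equal to $i-1$ matches that of $\alpha$) still has no part equal to $i-1$, giving $\tup_i\tup_j(\alpha)=0$ as well; the case of no part equal to $j-1$ is symmetric. The argument involves no computation beyond this bookkeeping, so the only real obstacle is to state the disjointness observation cleanly and to be disciplined about the $0$ and prepend conventions, so that the equality genuinely holds on the nose rather than merely for a ``generic'' $\alpha$.
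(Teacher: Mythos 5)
Your argument is correct and complete: the disjointness of the target/replacement pairs $\{i-1,i\}$ and $\{j-1,j\}$ forced by $|i-j|\geq 2$ is exactly the point, and you handle the prepending case $i=1$ and the annihilating cases cleanly. Note that this paper does not prove the lemma itself but imports it from the cited reference, so there is no internal proof to compare against; your write-up is the natural self-contained argument one would expect there.
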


\begin{lemma}\cite[Lemma 5.13]{VasuMN}\label{lem:commutativityconsecutive}
Let $j\geq 2$ be a positive integer and $\alpha$ be a composition. Suppose that $\alpha$ has parts equal to $j$ and $j-1$ and that the number of parts equal to $j$ which lie to the left of the leftmost instance of a part equal to $j-1$ is $\geq 1$. Then, we have that
\begin{eqnarray*}
\tup_j\tup_{j+1}(\alpha)=\tup_{j+1}\tup_j(\alpha).
\end{eqnarray*}
\end{lemma}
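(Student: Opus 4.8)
The plan is to argue purely by tracking the positions that each operator modifies. Write $\alpha=(\alpha_1,\ldots,\alpha_k)$ and, using the hypothesis that $\alpha$ contains parts equal to both $j$ and $j-1$, let $q$ be the index of the leftmost part equal to $j-1$ and $p$ be the index of the leftmost part equal to $j$. The hypothesis that at least one part equal to $j$ lies to the left of the leftmost $j-1$ is exactly the statement that $p<q$; this inequality is the whole engine of the argument, so I would isolate it at the outset.

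Next I would compute $\tup_{j+1}\tup_j(\alpha)$ step by step. Since $j\geq 2$ and $\alpha$ has a part equal to $j-1$, the operator $\tup_j$ is nonzero and replaces the part at position $q$ by $j$, leaving all other parts fixed; call the result $\beta$. In $\beta$ the parts equal to $j$ are exactly those of $\alpha$ together with the new one at position $q$, so the leftmost part equal to $j$ in $\beta$ is still at position $p$, because $p<q$ and $\alpha_p=j$ is untouched by $\tup_j$. Hence $\tup_{j+1}$ is nonzero and replaces position $p$ by $j+1$. The net effect on $\alpha$ is: position $p$ becomes $j+1$, position $q$ becomes $j$, and nothing else changes.

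Then I would compute $\tup_j\tup_{j+1}(\alpha)$ in the same way. As $\alpha$ has a part equal to $j$, the operator $\tup_{j+1}$ is nonzero and replaces the leftmost such part, namely the one at position $p$, by $j+1$; call the result $\gamma$. Because $\tup_{j+1}$ changes only position $p$ (whose value was $j$), every part equal to $j-1$ in $\gamma$ is inherited unchanged from $\alpha$, so the leftmost $j-1$ in $\gamma$ is still at position $q$. Thus $\tup_j$ is nonzero and replaces position $q$ by $j$. Again the net effect is: position $p$ becomes $j+1$, position $q$ becomes $j$, all else fixed. Comparing with the previous paragraph shows the two compositions produce identical results, giving the claimed identity.

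The only delicate point --- and the place where the hypothesis is indispensable --- is the claim in the $\tup_{j+1}\tup_j$ order that the leftmost part equal to $j$ in $\beta$ is still at position $p$ rather than at the freshly created $j$ at position $q$. This is guaranteed precisely by $p<q$. Were there no part equal to $j$ to the left of the leftmost $j-1$, the $j$ created by $\tup_j$ could become the new leftmost $j$, causing $\tup_{j+1}$ to act at position $q$ instead of $p$ and breaking commutativity; so I would be sure to flag this dependence explicitly. All the nonvanishing checks are immediate, as recorded above, so no further case analysis is needed.
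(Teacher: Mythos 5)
Your proof is correct and complete: the paper itself does not prove this lemma (it is cited from \cite[Lemma~5.13]{VasuMN}), and your direct position-tracking argument --- showing both orders of application change exactly position $p$ to $j+1$ and position $q$ to $j$, with the hypothesis $p<q$ used precisely to ensure the newly created $j$ at position $q$ does not usurp the role of leftmost $j$ --- is exactly the right verification, including the nonvanishing checks.
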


%%% long action-packed proof %%%

\begin{lemma}\label{lem:adjacentnonattacking}
Let $\alpha \vDash n$ and $\rtau \in \SRCT(\alpha)$. Suppose that $i\in \des(\rtau)$ with $i$ in column $k$ and $i+1$ in column $k+1$. Assume further that $i$ and $i+1$ are non-attacking. Then $k\geq 2$.
\end{lemma}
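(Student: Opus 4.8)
The plan is to argue by contradiction: assume $k=1$ and derive that $i$ and $i+1$ must in fact be attacking, contradicting the hypothesis. The only ingredients needed are the two monotonicity conditions in Definition~\ref{def:SRCT}, namely that entries strictly decrease along each row and strictly increase down the first column; the triple rule will play no role.

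First I would fix notation for the two relevant cells. Write the entry $i$ in cell $(r,1)$ (using $k=1$) and the entry $i+1$ in cell $(r',2)$. Since the reverse composition diagram is left-justified, the existence of the cell $(r',2)$ guarantees that $(r',1)$ also lies in the diagram and carries some entry. By the row-decreasing condition of Definition~\ref{def:SRCT}, that entry satisfies $\rtau(r',1) > \rtau(r',2) = i+1$, and in particular $\rtau(r',1) > i = \rtau(r,1)$.

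Next I would invoke the first-column condition. Since the first column strictly increases from top to bottom and $\rtau(r',1) > \rtau(r,1)$, the larger value must sit in a strictly lower row, forcing $r' > r$. Thus $i+1$ occupies a cell one column to the right of, and strictly below, the cell holding $i$; that is, $i+1$ is strictly southeast of $i$. By the second clause of Definition~\ref{def:attack}, $i$ and $i+1$ are then attacking, contradicting our assumption. Hence $k \neq 1$, and since columns are indexed by positive integers we conclude $k \geq 2$.

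There is essentially no serious obstacle here: the entire argument is forced once one observes that placing $i$ in the first column makes the left neighbor of $i+1$ a first-column entry exceeding $i+1$, which in turn pins $i+1$ below $i$ by column monotonicity. The only point requiring care is to confirm that the cell $(r',1)$ exists before appealing to the row-decreasing condition, which follows immediately from the left-justification of reverse composition diagrams.
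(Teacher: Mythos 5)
Your proof is correct and uses essentially the same ingredients as the paper's: the row-decreasing condition forces the first-column entry of the row containing $i+1$ to exceed $i$, and the strict increase of the first column then pins down the relative position of the two rows. The only difference is cosmetic — the paper uses non-attacking to place $i+1$'s row strictly above $i$'s and contradicts column-monotonicity, while you use column-monotonicity to place it strictly below and contradict non-attacking.
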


\begin{proof}
Assume, on the contrary, that there exists an $i \in \des(\rtau)$ in the first column with $i+1$ in the second column satisfying the condition that $i$ and $i+1$ are non-attacking. Then the entry in the first column of the row containing $i+1$ is greater than $i$ by definition. But the row containing $i+1$ is strictly above the row containing $i$, and hence we have a contradiction to the fact that the first column of an SRCT increases from top to bottom. The claim now follows.
\end{proof}

\begin{lemma}\label{lem:switchSRCT}
\begin{enumerate}
\item If $\alpha \vDash n$, $\rtau \in  \SRCT(\alpha)$ and $j\in \des(\rtau)$ such that $j$ and $j+1$ are non-attacking, then $s_j(\rtau)\in \SRCT(\alpha)$. 
\item If $\alpha \vDash n$, $\rtau \in  \SRCT(\alpha)$ and $j\notin \des(\rtau)$ such that $j$ is not in the cell to the immediate right of $j+1$, then $s_j(\rtau)\in \SRCT(\alpha)$. 
\end{enumerate}
\end{lemma}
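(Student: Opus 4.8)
The plan is to exploit the fact that $s_j$ changes only the two cells occupied by $j$ and $j+1$, and that these two values are consecutive. The key observation I would record first is a \emph{same-side principle}: any cell whose entry $m$ differs from $j$ and $j+1$ satisfies either $m<j$ (so $m$ is smaller than both $j$ and $j+1$) or $m>j+1$ (so $m$ exceeds both). Consequently, passing from $\rtau$ to $\rtau':=s_j(\rtau)$ preserves the relative order between any such cell and either of the two cells holding $j$ and $j+1$. It follows that every instance of the three defining conditions of an SRCT (Definition~\ref{def:SRCT}) involving \emph{at most one} of the cells containing $j$ and $j+1$ is automatically inherited by $\rtau'$; the only conditions that can fail are those that directly compare the cell of $j$ with the cell of $j+1$. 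This reduces the entire verification to a short list of local configurations, each governed by the relative position of $j$ and $j+1$.

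Next I would dispatch the row and first-column conditions. For rows, the only way the swap can break ``decreasing left to right'' is if $j$ sits immediately to the right of $j+1$ in some row; every other row-adjacency involving $j$ or $j+1$ survives by the same-side principle. In part~(1) this configuration forces $j+1$ strictly left of $j$, contradicting $j\in\des(\rtau)$, while in part~(2) it is excluded by hypothesis. For the first column, the swap can only cause trouble when $j$ and $j+1$ occupy consecutive cells of the first column (necessarily $j$ above $j+1$). In part~(1) this is a same-column, hence attacking, pair, so it is excluded; in part~(2), sharing a column forces $j+1$ weakly right of $j$ and hence $j\in\des(\rtau)$, again contradicting the hypothesis. (Note also that in part~(2) the assumption $j\notin\des(\rtau)$ already prevents $j$ from lying in the first column at all.)

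The main obstacle is the triple rule, and here the same-side principle pays off: a triple can change validity only if two of its three cells hold $j$ and $j+1$. Writing the triple as $b=\rtau(r,k)$, $c=\rtau(r,k+1)$, $a=\rtau(r',k+1)$ with $r<r'$, there are exactly three ways this can occur, according to which pair among $\{b,c\}$, $\{b,a\}$, $\{c,a\}$ equals $\{j,j+1\}$, and I would treat them in turn. If $\{b,c\}=\{j,j+1\}$ then $j,j+1$ are horizontally adjacent; row-decrease forces $j+1$ left of $j$, the excluded ``$j$ immediately right of $j+1$'' configuration. If $\{c,a\}=\{j,j+1\}$ then $j,j+1$ share the column $k+1$, so they are attacking (excluded in part~(1)) and in the same column, forcing $j\in\des(\rtau)$ (excluded in part~(2)). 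The case $\{b,a\}=\{j,j+1\}$ splits: if $b=j,\,a=j+1$ then $j+1$ is strictly southeast of $j$ in adjacent columns, i.e.\ attacking (part~(1)) with $j+1$ weakly right of $j$, so $j\in\des(\rtau)$ (part~(2)); both are excluded. The surviving case is $b=j+1,\,a=j$, which can arise only in part~(2); after the swap the cell $(r,k)$ holds $j$ and $(r',k+1)$ holds $j+1$, so $b'<a'$ and the triple rule at this location is \emph{vacuously} satisfied, while $c$ is unchanged.

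Assembling these observations, every instance of the three conditions holds in $\rtau'$, so $\rtau'\in\SRCT(\alpha)$ in both parts. The delicate point to get right is the bookkeeping in the triple-rule step: one must check that no \emph{new} violating instance is created and that the required entry $c$ continues to exist, both of which follow because $s_j$ moves no cells and, by the same-side principle, reverses exactly one comparison---the one between the cells of $j$ and $j+1$.
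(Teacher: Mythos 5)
Your proof is correct, and it takes a genuinely different route from the paper's. The paper encodes $\rtau$ by its growth word $\tup_{i_1}\cdots\tup_{i_n}$ of box-adding operators (where $i_j$ is the column containing $j$) and observes that $s_j(\rtau)$ corresponds to transposing $\tup_{i_j}$ and $\tup_{i_{j+1}}$ in that word; the commutation relations for box-adding operators (Lemmas~\ref{lem:fullcommutativity} and \ref{lem:commutativityconsecutive}, the latter applicable thanks to Lemma~\ref{lem:adjacentnonattacking}) then guarantee the transposed word still builds the shape $\alpha$, so it is the growth word of an SRCT of shape $\alpha$, necessarily $s_j(\rtau)$. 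That argument is short but outsources the real work to cited results on box-adding operators, and the paper dispatches part~(2) with ``the proof is similar.'' Your argument instead verifies the three axioms of Definition~\ref{def:SRCT} directly from the ``same-side principle,'' which correctly isolates the unique comparison that $s_j$ can reverse, and your case analysis of the triple rule is complete: the three pairings $\{b,c\}$, $\{b,a\}$, $\{c,a\}=\{j,j+1\}$ are each excluded by the attacking/descent hypotheses except for the configuration $b=j+1$, $a=j$ in part~(2), which you rightly observe becomes vacuous after the swap (and the existence of $c$ is not needed there since the hypothesis of the triple rule fails). What your approach buys is self-containment and a uniform treatment of both parts; what the paper's buys is brevity and a connection to the Pieri-rule/growth-word formalism it reuses elsewhere. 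One minor imprecision worth fixing: in the first-column step you say $j$ and $j+1$ would have to occupy \emph{consecutive} cells of the first column, but any two first-column cells holding $j$ and $j+1$ would do --- this does not affect your argument, since your exclusion (same column $\Rightarrow$ attacking, resp.\ $\Rightarrow$ $j\in\des(\rtau)$) applies regardless of adjacency.
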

\begin{proof}
For the first part, let the growth word associated to $\rtau$ be $w=\tup_{i_1}\cdots \tup_{i_n}$. Then $w(\varnothing)=\alpha$. We can factorize $w$ as $w=w_1\tup_{i_j}\tup_{i_{j+1}}w_2$. Since $j\in \des(\rtau)$ and $j,j+1$ are non-attacking, we know that $i_j<i_{j+1}$. If $i_{j+1}-i_j \geq 2$, then we know that $\tup_{i_{j+1}}\tup_{i_j}(w_2(\varnothing))=\tup_{i_j}\tup_{i_{j+1}}(w_2(\varnothing))$ by Lemma \ref{lem:fullcommutativity}. Thus, we have that
\begin{eqnarray*}
w_1\tup_{i_{j+1}}\tup_{i_j}(w_2(\varnothing))&=&w_1 \tup_{i_j}\tup_{i_{j+1}}(w_2(\varnothing))\nonumber\\ &=& \alpha.
\end{eqnarray*}
Now, the above equation implies that $w_1\tup_{i_{j+1}}\tup_{i_j}w_2$ corresponds to an SRCT of shape $\alpha$ equal to $s_{j}(\rtau)$.

Now, assume that $i_{j+1}-i_j =1$, that is, $j$ and $j+1$ belong to columns $k$ and $k+1$ respectively. Then the growth word $w$ associated to $\rtau$ factorizes as $w=w_1\tup_{k}\tup_{k+1}w_2$. Since $j$ and $j+1$ are non-attacking by hypothesis, Lemma \ref{lem:adjacentnonattacking} implies that $k\geq 2$ and that, in $\rtau$, the cell containing $j+1$ is strictly northeast of the cell containing $j$. This implies that $w_2(\varnothing)$ contains a part equal to $k$ lying to the left of the leftmost part equal to $k-1$. Thus, Lemma \ref{lem:commutativityconsecutive} implies that $\tup_{k+1}\tup_{k}(w_2(\varnothing))=\tup_{k}\tup_{k+1}(w_2(\varnothing))$. This further implies that 
\begin{eqnarray*}
w_1\tup_{k+1}\tup_{k}(w_2(\varnothing))&=&w_1\tup_{k}\tup_{k+1}(w_2(\varnothing))\nonumber\\ &=&\alpha.
\end{eqnarray*}
Again, from the above equation it follows that $w_1\tup_{{k+1}}\tup_{k}w_2$ corresponds to an SRCT of shape $\alpha$ equal to $s_{j}(\rtau)$. Hence the claim follows.

The proof of the second part is similar.
\end{proof}

\begin{example}\label{ex:heckeaction}
Let $\rtau$ be the SRCT of shape $(3,4,2,3)$ shown below.
$$\tableau{
5&4&2\\9&7&6&3\\10&1\\12&11&8
}$$
Then $\des(\rtau)=\{1,2,5,7,9,10\}$. Thus, for all $1\leq i\leq 11$ such that $i\notin \des(\rtau)$, we have that $\pi_i(\rtau)=\rtau$. Notice further that $2,7,9$ and $10$ are attacking descents. Hence if $i=2,7,9$ or $10$, we have that $\pi_i(\rtau)=0$. Finally, we have that $\pi_1(\rtau)$ and $\pi_5(\rtau)$ are as below.
$$
\pi_1(\rtau)=\tableau{
5&4&1\\9&7&6&3\\10&2\\12&11&8
}
\qquad
\pi_5(\rtau)=\tableau{
6&4&2\\9&7&5&3\\10&1\\12&11&8
}$$
\end{example}
Now we are ready to show that the relations satisfied by the $0$-Hecke algebra $\hn$ are also satisfied by the operators $\{ \pi _i \} _{i=1} ^{n-1}$.

\begin{lemma}\label{lem:pisquared}
For $1\leq i\leq n-1$, we have $\pi_{i}^{2}=\pi_i$.
\end{lemma}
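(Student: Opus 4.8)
The plan is to verify $\pi_i^2 = \pi_i$ directly on each SRCT, extending $\pi_i$ linearly to the span of SRCTs so that $\pi_i(0) = 0$, and then splitting into the three cases of the defining equation~\eqref{eq:pi} according to whether $i \in \des(\rtau)$ and, if so, whether $i$ and $i+1$ are attacking.

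First I would dispose of the two easy cases. If $i \notin \des(\rtau)$, then $\pi_i(\rtau) = \rtau$, and since the tableau is unchanged $i$ is still not a descent, so applying $\pi_i$ again returns $\rtau$; hence $\pi_i^2(\rtau) = \rtau = \pi_i(\rtau)$. If $i$ is an attacking descent, then $\pi_i(\rtau) = 0$, and by linearity $\pi_i^2(\rtau) = \pi_i(0) = 0 = \pi_i(\rtau)$.

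The substantive case is when $i$ is a non-attacking descent, where $\pi_i(\rtau) = s_i(\rtau)$. Lemma~\ref{lem:switchSRCT}(1) guarantees $s_i(\rtau) \in \SRCT(\alpha)$, so $\pi_i$ is defined on it, and it remains to compute $\pi_i(s_i(\rtau))$. The key step, and the crux of the argument, is to show that $i \notin \des(s_i(\rtau))$. Since $i \in \des(\rtau)$, the entry $i+1$ lies weakly to the right of $i$ in $\rtau$; because the configuration is non-attacking, Definition~\ref{def:attack} rules out $i$ and $i+1$ sharing a column, so in fact $i+1$ lies in a strictly larger column than $i$. Passing to $s_i(\rtau)$ interchanges the two cells, so now $i+1$ occupies the strictly smaller column and $i$ the strictly larger one; thus $i+1$ lies strictly to the left of $i$ in $s_i(\rtau)$, and therefore $i \notin \des(s_i(\rtau))$. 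Consequently the first branch of~\eqref{eq:pi} applies and $\pi_i(s_i(\rtau)) = s_i(\rtau)$, giving $\pi_i^2(\rtau) = s_i(\rtau) = \pi_i(\rtau)$.

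I expect the only delicate point to be the column comparison in the last case; once one records that a non-attacking descent forces $i+1$ strictly to the right of $i$ (invoking Definition~\ref{def:attack}, with Lemma~\ref{lem:adjacentnonattacking} handling the adjacent-column subcase), the disappearance of the descent after the swap, and hence the idempotency, is immediate.
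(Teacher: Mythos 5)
Your proof is correct and takes essentially the same route as the paper's: the same three-case split according to~\eqref{eq:pi}, with the only substantive point being that $i\notin\des(s_i(\rtau))$ after the swap, which the paper asserts and you justify via the column comparison. One small remark: Lemma~\ref{lem:adjacentnonattacking} is not actually needed there, since non-attacking together with $i\in\des(\rtau)$ already places $i+1$ in a strictly larger column than $i$, which is all the descent check requires.
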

\begin{proof}
Consider an SRCT $\rtau$ of size $n$. If $i\notin \des(\rtau)$, then $\pi_i(\rtau)=\rtau$. Therefore, $\pi_{i}^{2}(\rtau)=\rtau=\pi_i(\rtau)$ in this case.

If $i\in \des(\rtau)$ but $i$ and $i+1$ are attacking, then $\pi_i(\rtau)=0$. Thus, $\pi_{i}^{2}(\rtau)=0=\pi_i(\rtau)$ in this case. Finally, if $i\in \des(\rtau)$ with $i$, $i+1$ non-attacking then $\pi_i(\rtau)=s_i(\rtau)$. Now, in $s_i(\rtau)$, we see that $i\notin \des(s_i(\rtau))$ implying that $\pi_i(s_i(\rtau))=s_i(\rtau)$. Therefore, in this case we have $\pi_i^{2}(\rtau)=s_i(\rtau)=\pi_i(\rtau)$. Thus, in all three cases, we have that $\pi_{i}^{2}=\pi_i$.
\end{proof}

\begin{lemma}\label{lem:pidifferby2}
For $1\leq i,j\leq n-1$ such that $\lvert i-j\rvert \geq 2$, we have $\pi_i\pi_j=\pi_j\pi_i$.
\end{lemma}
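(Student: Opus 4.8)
The plan is to fix an arbitrary $\rtau\in\SRCT(\alpha)$ and verify the identity $\pi_i\pi_j(\rtau)=\pi_j\pi_i(\rtau)$ by a short case analysis, the whole point being that since $\lvert i-j\rvert\geq 2$ the two pairs of entries $\{i,i+1\}$ and $\{j,j+1\}$ are disjoint. First I would record the one structural fact that makes everything work: whenever $\pi_j(\rtau)\neq 0$, the filling $\pi_j(\rtau)$ is either $\rtau$ itself or $s_j(\rtau)$, and in both cases the cells occupied by $i$ and $i+1$ are identical to those in $\rtau$. Hence $i\in\des(\rtau)$ if and only if $i\in\des(\pi_j(\rtau))$, and $i,i+1$ are attacking in $\rtau$ if and only if they are attacking in $\pi_j(\rtau)$; the symmetric statement with the roles of $i$ and $j$ reversed holds as well. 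This says precisely that the ``type'' of $i$ (non-descent, attacking descent, or non-attacking descent) is left unchanged by $\pi_j$, and conversely.

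With this in hand I would split into cases according to the types of $i$ and $j$ in $\rtau$. If either of them is an attacking descent, say $j$, then $\pi_j(\rtau)=0$, so $\pi_i\pi_j(\rtau)=0$; on the other side, if $\pi_i(\rtau)=0$ then $\pi_j\pi_i(\rtau)=0$ at once, while if $\pi_i(\rtau)\neq 0$ the structural fact shows $j$ is still an attacking descent of $\pi_i(\rtau)$, so $\pi_j\pi_i(\rtau)=0$ as well; either way both sides vanish. If neither is an attacking descent, then each of $\pi_i,\pi_j$ acts on $\rtau$ either as the identity (non-descent) or as the genuine swap $s_i$, $s_j$ (non-attacking descent). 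When at least one of them acts as the identity the two sides are immediately equal, again using that the type of the other index is preserved by the structural fact.

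The only case requiring a computation is when $i$ and $j$ are both non-attacking descents. Here Lemma~\ref{lem:switchSRCT}(1) guarantees $s_j(\rtau)\in\SRCT(\alpha)$, so $\pi_i$ is defined on it, and by the structural fact $i$ is still a non-attacking descent there; thus $\pi_i\pi_j(\rtau)=s_i(s_j(\rtau))$, and symmetrically $\pi_j\pi_i(\rtau)=s_j(s_i(\rtau))$. Since the four entries $i,i+1,j,j+1$ are distinct, the transpositions $s_i$ and $s_j$ move disjoint pairs of cells and therefore commute as operations on the filling, so $s_i s_j(\rtau)=s_j s_i(\rtau)$, finishing this case.

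I expect the main (and essentially the only) obstacle to be the bookkeeping behind the structural fact, namely making precise that neither the descent condition nor the attacking condition for $i$ can be disturbed by relocating $j$ and $j+1$. Once the disjointness of $\{i,i+1\}$ and $\{j,j+1\}$ is invoked, this is immediate from the definitions of $\des$ and of attacking, and the remaining case checks are purely formal.
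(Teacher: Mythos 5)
Your proposal is correct and follows essentially the same route as the paper's proof: both arguments rest on the observation that since $\{i,i+1\}$ and $\{j,j+1\}$ are disjoint, applying $\pi_j$ (when nonzero) leaves the cells of $i$ and $i+1$ untouched, so the descent/attacking status of $i$ is preserved, and the only nontrivial case reduces to $s_is_j(\rtau)=s_js_i(\rtau)$. Your packaging of this as a single ``structural fact'' about preserved types merely streamlines the casework the paper carries out explicitly.
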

\begin{proof}
We will proceed by casework. Consider an SRCT $\rtau$ of size $n$. Suppose first that neither $i$ nor $j$ belong to $\des(\rtau)$. Then $\pi_i(\rtau)=\pi_j(\rtau)=\rtau$. Hence, $\pi_{i}\pi_{j}(\rtau)=\pi_{j}\pi_{i}(\rtau)$ in this case.

Suppose now that $i\in \des(\rtau)$. If $i$ and $i+1$ are attacking, then $\pi_{i}(\rtau)=0$. If $\pi_j(\rtau)=0$, we are done. Hence, assume otherwise. Since $\lvert i-j\rvert\geq 2$, we know that $i\in \des(\pi_j(\rtau))$ and $i$, $i+1$ are still attacking in $\pi_j(\rtau)$. This gives that $\pi_i\pi_j(\rtau)=0$. Thus, in this case as well, we have that $\pi_i\pi_j(\rtau)=\pi_j\pi_i(\rtau)=0$.

Finally, assume that $i\in\des(\rtau)$ and $i$, $i+1$ are non-attacking. Thus, $\pi_i(\rtau)=s_i(\rtau)$. If $j\notin \des(\rtau)$, then the fact that $\lvert i-j\rvert\geq 2$ implies $j\notin \des(s_i(\rtau))$. Thus $\pi_j\pi_i(\rtau)=\pi_j(s_i(\rtau))=s_i(\rtau)=\pi_i\pi_j(\rtau)$. If $j\in \des(\rtau)$ with $j$ and $j+1$ attacking, then the same holds for $s_i(\rtau)$. Thus, $\pi_j(\rtau)=\pi_j(s_i(\rtau))=0$. If $j\in \des(\rtau)$ with $j$ and $j+1$ non-attacking, then $\pi_i\pi_j(\rtau)=s_i(s_j(\rtau))$ and $\pi_j\pi_i(\rtau)=s_j(s_i(\rtau))$. But since $\lvert i-j\rvert \geq 2$, $s_i(s_j(\rtau))$ is the same as $s_j(s_i(\rtau))$.

Thus, we have established that $\pi_i\pi_j=\pi_j\pi_i$.
\end{proof}

\begin{lemma}\label{lem:piiiplus1}
For $1\leq i\leq n-2$, we have $\pi_i\pi_{i+1}\pi_i=\pi_{i+1}\pi_i\pi_{i+1}$.
\end{lemma}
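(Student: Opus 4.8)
The plan is to verify the relation by evaluating both $\pi_i\pi_{i+1}\pi_i$ and $\pi_{i+1}\pi_i\pi_{i+1}$ on an arbitrary $\rtau\in\SRCT(\alpha)$ and checking that the two outputs agree. The key simplifying observation is that each operator only ever moves the entries $i,i+1,i+2$: applying $\pi_i$ can only interchange $i$ and $i+1$ (or fix $\rtau$, or send it to $0$), while $\pi_{i+1}$ can only interchange $i+1$ and $i+2$ (or fix, or zero). Consequently, along any branch of either computation that does not reach $0$, the set of three cells occupied by $i,i+1,i+2$ is the same as in $\rtau$, and the operators merely permute these three labels among three fixed cells. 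Since the descent condition and the attacking condition are determined entirely by the relative positions of the cells involved, I would fix the geometry of these three cells and study the induced dynamics on the arrangements of the labels.

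With this reduction in hand, I would organize the argument as casework on the descent statuses of $i$ and of $i+1$ in $\rtau$ and, when these are descents, on whether each is attacking. Idempotency (Lemma~\ref{lem:pisquared}) lets me collapse the product whenever an operator acts as the identity, so the cases where $i$ or $i+1$ is a non-descent shorten immediately. Whenever a genuine interchange is performed I would invoke Lemma~\ref{lem:switchSRCT} to guarantee that the intermediate filling is again a valid SRCT, so that the next operator is well defined. The SRCT axioms together with Lemma~\ref{lem:adjacentnonattacking} are used to rule out geometrically impossible configurations of the three cells, keeping the number of realizable configurations small.

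The substantive case, and the one I expect to be the main obstacle, is when both $i$ and $i+1$ are non-attacking descents of $\rtau$, so that interchanges genuinely occur at each step. Here the clean framing is that, provided no $0$ is encountered on either side, both products realize the \emph{same} permutation of the three labels, namely the braid identity $s_is_{i+1}s_i=s_{i+1}s_is_{i+1}$, and hence produce the same SRCT. Thus the real work is to show that a $0$ arises in the left product if and only if one arises in the right. The delicacy is precisely the overlap at $i+1$: applying $\pi_{i+1}$ changes which cell holds $i+1$, and can therefore change whether $i$ and $i+1$ are attacking, so the two orders of application may meet the attacking/non-attacking transitions at different stages. Checking that each realizable configuration of the three cells forces matching transitions — and in particular that the two products vanish simultaneously — is the crux, and once every such configuration is verified to give equal (possibly zero) outputs, the relation $\pi_i\pi_{i+1}\pi_i=\pi_{i+1}\pi_i\pi_{i+1}$ follows.
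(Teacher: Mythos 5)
Your plan is correct and follows essentially the same route as the paper's proof: an exhaustive case analysis on the descent and attacking statuses of $i$ and $i+1$ in $\rtau$ and in the intermediate tableaux, using Lemma~\ref{lem:switchSRCT} to validate each swapped filling and Lemma~\ref{lem:adjacentnonattacking} to exclude impossible cell configurations, with the crux correctly identified as showing the two products vanish simultaneously even though the attacking/non-attacking transitions can occur at different stages on the two sides. One small calibration: in the paper the most delicate verifications arise in the mixed subcases of the ``both descents'' case (one of $i$, $i+1$ attacking and the other not, where one side dies immediately while the other only reaches $0$ after two genuine swaps), rather than when both are non-attacking, but your configuration-by-configuration check subsumes these.
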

\begin{proof}
Consider an SRCT $\rtau$ of size $n$. We will deal with various cases.\newline
\emph{Case I: $i,i+1\notin \des(\rtau)$}\newline
In this case we have $\pi_i(\rtau)=\pi_{i+1}(\rtau)=\rtau$. Thus, we clearly have that $ \pi_i\pi_{i+1}\pi_i(\rtau)=\pi_{i+1}\pi_i\pi_{i+1}(\rtau)$.

\noindent \emph{Case II: $i\notin \des(\rtau)$, $i+1\in \des(\rtau)$}\newline
In this case we have $\pi_{i}(\rtau)=\rtau$. If $i+1$ and $i+2$ are attacking, then $\pi_{i+1}(\rtau)=0$. This implies that $\pi_i\pi_{i+1}\pi_i(\rtau)=\pi_{i+1}\pi_i\pi_{i+1}(\rtau)=0$. 

Assume therefore that $i+1$ and $i+2$ are non-attacking. Then $\pi_{i+1}(\rtau)=s_{i+1}(\rtau)$. One can see that we have three possibilities in $s_{i+1}(\rtau)$, each of which is dealt with individually in what follows.
\begin{itemize}
\item Consider first the case where $i\notin \des(s_{i+1}(\rtau))$. Then we have $\pi_i\pi_{i+1}(\rtau)=\pi_{i}(s_{i+1}(\rtau))=s_{i+1}(\rtau)$. Thus, we have $\pi_{i+1}\pi_i\pi_{i+1}(\rtau)=s_{i+1}(\rtau)$. Using the fact that $\pi_i(\rtau)=\rtau$ gives us that $\pi_{i+1}\pi_i(\rtau)=s_{i+1}(\rtau)$. Since $i\notin \des(s_{i+1}(\rtau))$ by our assumption, we have that $\pi_i\pi_{i+1}\pi_i(\rtau)=s_{i+1}(\rtau)$, and we are done.
\item Now consider the case where $i\in \des(s_{i+1}(\rtau))$ and $i$, $i+1$ attacking in $s_{i+1}(\rtau)$. Then $\pi_i\pi_{i+1}(\rtau)=\pi_{i}(s_{i+1}(\rtau))=0$ and hence $\pi_{i+1}\pi_i\pi_{i+1}(\rtau)=0$. Now, clearly $\pi_{i+1}\pi_i(\rtau)=\pi_{i+1}(\rtau)=s_{i+1}(\rtau)$. Thus, $\pi_i\pi_{i+1}\pi_i(\rtau)=0$ which concludes this case.
\item Consider finally the case where $i\in \des(s_{i+1}(\rtau))$ with $i$, $i+1$ non-attacking in $s_{i+1}(\rtau)$. Then, $\pi_i\pi_{i+1}(\rtau)=s_is_{i+1}(\rtau)$. Notice that, in $s_is_{i+1}(\rtau)$, $i+1$ is not a descent. Hence $\pi_{i+1}\pi_i\pi_{i+1}(\rtau)=s_is_{i+1}(\rtau)$. This is precisely what $\pi_{i}\pi_{i+1}\pi_i(\rtau)$ is, as $\pi_i(\rtau)$ is just $\rtau$.
\end{itemize}

\noindent\emph{Case III: $i\in \des(\rtau)$, $i+1\notin\des(\rtau)$}\newline
Firstly, notice that we have $\pi_{i+1}(\rtau)=\rtau$. If $i$ and $i+1$ are attacking in $\rtau$, then $\pi_{i}\pi_{i+1}(\rtau)=\pi_{i}(\rtau)=0$ and therefore $\pi_{i+1}\pi_i\pi_{i+1}(\rtau)=0$. Clearly, $\pi_{i}\pi_{i+1}\pi_{i}(\rtau)=0$ as well.

Now, assume that $i$ and $i+1$ are non-attacking. Then $\pi_{i}\pi_{i+1}(\rtau)=s_i(\rtau)$. Again, we have the following three possibilities in $s_i(\rtau)$.
\begin{itemize}
\item Assume that $i+1\notin \des(s_i(\rtau))$. Then $\pi_{i+1}(s_i(\rtau))=s_i(\rtau)$. Thus, in particular, we have that $\pi_{i+1}\pi_{i}\pi_{i+1}(\rtau)=s_i(\rtau)$. 
On the other hand we have that $\pi_i\pi_{i+1}\pi_i(\rtau)=\pi_{i}\pi_{i+1}(s_i(\rtau))=\pi_i(s_i(\rtau))=s_i(\rtau)$.
\item Assume now that $i+1\in \des(s_i(\rtau))$ with $i+1$, $i+2$ attacking in $s_i(\rtau)$. Then $\pi_{i+1}\pi_i(\rtau)=\pi_{i+1}(s_i(\rtau))=0$. This gives that $\pi_i\pi_{i+1}\pi_i(\rtau)=0$. Now, we have that $\pi_i\pi_{i+1}(\rtau)=\pi_i(\rtau)=s_i(\rtau)$. Thus, $\pi_{i+1}\pi_i\pi_{i+1}(\rtau)=0$ too.
\item The third case is the one where $i+1\in \des(s_i(\rtau))$ and $i+1$, $i+2$ are non-attacking in $s_i(\rtau)$. Then $\pi_{i+1}\pi_i(\rtau)=s_{i+1}s_i(\rtau)$. Notice that $i\notin \des(s_{i+1}s_i(\rtau))$. Therefore, $\pi_i\pi_{i+1}\pi_{i}(\rtau)=s_{i+1}s_i(\rtau)$. We have that $\pi_{i+1}\pi_{i}\pi_{i+1}(\rtau)=\pi_{i+1}\pi_i(\rtau)=s_{i+1}s_i(\rtau)$, and this settles the last case here.
\end{itemize}
\noindent\emph{Case IV: $i\in \des(\rtau)$, $i+1\in \des(\rtau)$} \newline
Assume first the scenario where $i$ and $i+1$ are attacking in $\rtau$. In this case, $\pi_i(\rtau)=0$ implying $\pi_{i}\pi_{i+1}\pi_i(\rtau)=0$. We have the following two situations arising.
\begin{itemize}
\item If $i+1$ and $i+2$ are also attacking in $\rtau$, then it is immediate that $\pi_{i+1}\pi_i\pi_{i+1}(\rtau)=0$. 
\item Assume now that $i+1$ and $i+2$ are non-attacking in $\rtau$. Then we claim that $i$ and $i+1$ are non-attacking in $\pi_{i+1}(\rtau)=s_{i+1}(\rtau)$ and that $i\in \des(s_{i+1}(\rtau))$. The fact that $i\in \des(s_{i+1}(\rtau))$ is immediate. If $i+1$ and $i+2$ occupy columns whose indices differ by at least 2 in $\rtau$, then it is clear that $i$ and $i+1$ will be non-attacking in $s_{i+1}(\rtau)$. Assume therefore that $i+1$ and $i+2$ occupy adjacent columns in $\rtau$. If $i$ and $i+1$ are also in adjacent columns of $\rtau$, then they occur in columns that are distance exactly 2 apart in $s_{i+1}(\rtau)$. Thus, this also gives that $i$ and $i+1$ are non-attacking in $s_{i+1}(\rtau)$. Now assume that $i$ and $i+1$ are both in column $k$ in $\rtau$. Since $i+1$ and $i+2$ are non-attacking and in adjacent columns in $\rtau$, we know by Lemma \ref{lem:adjacentnonattacking}, that $k\geq 2$. Thus, in $\rtau$, $i+1$ occupies a cell that is strictly above the cell occupied by $i$. Since, in $\rtau$, $i+2$ occupies a cell that is strictly northeast of the cell containing $i+1$, we get that $i$ and $i+1$ will be non-attacking in $s_{i+1}(\rtau)$.

Thus $\pi_{i}\pi_{i+1}(\rtau)=s_is_{i+1}(\rtau)$. Now notice that $i+1\in \des(s_is_{i+1}(\rtau))$ and that $i+1$ and $i+2$ are attacking in $s_is_{i+1}(\rtau)$. This gives us that $\pi_{i+1}\pi_{i}\pi_{i+1}(\rtau)=0$.
\end{itemize}
Assume now the second scenario that $i$ and $i+1$ are non-attacking in $\rtau$. Then $\pi_{i}(\rtau)=s_i(\rtau)$. Since $i+1\in \des(\rtau)$, we are guaranteed that $i+1\in \des(s_i(\rtau))$. Moreover, $i+1$ and $i+2$ are non-attacking in $s_i(\rtau)$. To see this, we will use very similar analysis as we did earlier. 

If $i+1$ and $i+2$ are non-attacking in $\rtau$, then we get that $i+1$ and $i+2$ occupy columns whose indices differ by at least 2 in $s_i(\rtau)$. Hence they are clearly non-attacking in this case. If $i+1$ and $i+2$ are attacking and in adjacent columns, we still reach the conclusion that $i+1$ and $i+2$ occupy columns whose indices differ by at least 2 in $s_i(\rtau)$. Thus, in this case too, we get that $i+1$ and $i+2$ are non-attacking in $s_i(\rtau)$. Assume finally that $i+1$ and $i+2$ are in the same column, say $k$, of $\rtau$. If $i$ and $i+1$ are in columns whose indices differ by at least 2 in $\rtau$, we know that $i+1$ and $i+2$ will be non-attacking in $s_i(\rtau)$. Hence, assume that $i$ and $i+1$ are in adjacent columns. Hence $i$ is in column $k-1$ and $i+1$ occupies a cell strictly northeast of the cell containing $i$. Lemma \ref{lem:adjacentnonattacking} implies that $k-1\geq 2$, which is equivalent to $k\geq 3$. Thus, we know that, in $\rtau$, $i+2$ occupies a cell in column $k$ that is strictly north of the cell occupied by $i+1$. Hence, in $s_i(\rtau)$, $i+1$ and $i+2$ are indeed non-attacking.

Thus $\pi_{i+1}\pi_{i}(\rtau)=s_{i+1}s_{i}(\rtau)$. Now we will consider two cases.
\begin{itemize}
\item If $i+1$ and $i+2$ are attacking in $\rtau$, then $i\in \des(s_{i+1}s_i(\rtau))$ with $i$ and $i+1$ attacking. Thus $\pi_i\pi_{i+1}\pi_i(\rtau)=0$ in this case. However, so is $\pi_{i+1}\pi_i\pi_{i+1}(\rtau)$ as $\pi_{i+1}(\rtau)=0$ already.

\item Assume that $i+1$ and $i+2$ are non-attacking in $\rtau$. This implies that $i\in \des(s_{i+1}s_i(\rtau))$ with $i$ and $i+1$ non-attacking too. Thus, $\pi_i\pi_{i+1}\pi_i(\rtau)=s_is_{i+1}s_{i}(\rtau)$. 

Now $\pi_{i+1}(\rtau)=s_{i+1}(\rtau)$ and it is clear that $i\in \des(s_{i+1}(\rtau))$ with $i$, $i+1$ non-attacking. Thus, $\pi_i\pi_{i+1}(\rtau)=s_is_{i+1}(\rtau)$. Since $i$ and $i+1$ are non-attacking in $\rtau$ and $i\in \des(\rtau)$, we are guaranteed that $i+1\in \des(s_is_{i+1}(\rtau))$ with $i+1$, $i+2$ non-attacking. Thus $\pi_{i+1}\pi_{i}\pi_{i+1}(\rtau)=s_{i+1}s_is_{i+1}(\rtau)$. One can check that $s_is_{i+1}s_i(\rtau)=s_{i+1}s_is_{i+1}(\rtau)$ by noticing that in both cases the positions of $i$ and $i+2$ in $\rtau$ have been interchanged. This completes the proof. 
\end{itemize}
\end{proof}

The proof of Theorem~\ref{the:0heckerels} now follows immediately from Lemmas~\ref{lem:pisquared}, \ref{lem:pidifferby2} and \ref{lem:piiiplus1}.

\section{The partial order \texorpdfstring{$\po$}{} on SRCTs}\label{sec:partialorder}
Since the operators $\{\pi_i\}_{i=1}^{n-1}$, which we will now term \emph{flips} for convenience, satisfy the same relations as the $0$-Hecke algebra $\hn$ by Theorem~\ref{the:0heckerels}, we can associate a well-defined linear operator $\pi_{\sigma}$ with any permutation $\sigma\in \sgrp_n$, as in Subsection \ref{subsec:reps}. We will use these operators to define a new partial order on SRCTs of the same shape. However, before that we need some definitions and results, and to recall that we say $\sigma \in \sgrp _n$ is written in \emph{one-line notation} if we write it as a word $\sigma(1)\sigma(2)\cdots \sigma(n)$.

\begin{definition}\label{def:column_word}Let $\alpha \vDash n$ whose largest part is $\alpha _{max}$, and $\rtau \in \SRCT(\alpha)$, whose entries in column $i$ for $1\leq i \leq \alpha _{max}$ read from top to bottom are some word $w^i$. Then we define the \emph{column word} of $\rtau$, denoted by $\col_{\rtau}$, to be the word
$$w^1 \  w^2\cdots w^{\alpha _{max}}$$
and identify it with the natural permutation of $\sgrp _n$ written in one-line notation.
\end{definition}

\begin{example}\label{ex:colword} Let $\rtau$ be the SRCT from Example~\ref{ex:heckeaction}. Then 
$$\col _{\rtau}=5\ 9\ 10\ 12\ 4\ 7\ 1\ 11\ 2\ 6\ 8\ 3,$$which can also be viewed as a permutation in $\sgrp _{12}$ in one-line notation.
\end{example}

Let $\alpha\vDash n$ and $\rtau_1\in \SRCT(\alpha)$ be such that $i\in \des(\rtau_1)$. Suppose further that $i$ is a non-attacking descent in $\rtau_1$. Let $\pi_i(\rtau_1)=\rtau_2$. Then observe that $s_i\col_{\rtau_1}=\col_{\rtau_2}$ as permutations, where $s_i=(i,i+1)$ is the adjacent transposition in $\sgrp_n$ that interchanges $i$ and $i+1$. 

\begin{lemma}\label{lem:poflips}
Let $\alpha \vDash n$ and $\rtau _1, \rtau _2 \in \SRCT (\alpha)$ such that $\pi _{j_1} \cdots \pi _{j_r}(\rtau _1)=\rtau _2$ for $1\leq j_1, \ldots , j_r \leq n-1$. Let $s_{i_1}\cdots s_{i_p}$ be a reduced word for $\col_{\rtau_2}(\col_{\rtau_1})^{-1}$. Then $\pi_{i_1}\cdots\pi_{i_p}(\rtau_1)=\rtau_2$.
\end{lemma}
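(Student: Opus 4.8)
The plan is to reduce the given word, which may be non-reduced and padded with trivial flips, to a bona fide reduced word for $w:=\col_{\rtau_2}(\col_{\rtau_1})^{-1}$, and then to invoke the well-definedness of $\pi_w$ furnished by Theorem~\ref{the:0heckerels}.

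First I extract the genuine flips. Since $\pi_{j_1}\cdots\pi_{j_r}(\rtau_1)=\rtau_2\neq 0$ and $\pi_i(0)=0$, no intermediate filling can be $0$, so (reading right to left) each application of a $\pi_{j_k}$ is either trivial, when its index is not a non-attacking descent of the current tableau, or a genuine swap. Trivial applications leave the tableau unchanged, so discarding them yields a chain $\rtau_1=U_0,U_1,\ldots,U_m=\rtau_2$ in $\SRCT(\alpha)$ in which each $U_\ell=\pi_{b_\ell}(U_{\ell-1})=s_{b_\ell}(U_{\ell-1})$ is a genuine non-attacking-descent flip with $b_\ell\in\des(U_{\ell-1})$.

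Next I track column words and show $s_{b_m}\cdots s_{b_1}$ is reduced. By the observation preceding the lemma, each genuine flip gives $\col_{U_\ell}=s_{b_\ell}\col_{U_{\ell-1}}$, whence $\col_{\rtau_2}=s_{b_m}\cdots s_{b_1}\col_{\rtau_1}$ and so $w=s_{b_m}\cdots s_{b_1}$. The key point is that because $b_\ell$ is a non-attacking descent, $b_\ell+1$ lies in a strictly later column than $b_\ell$, hence $b_\ell$ precedes $b_\ell+1$ in the column reading word; left multiplication by $s_{b_\ell}$ therefore raises length, so $l(\col_{U_\ell})=l(\col_{U_{\ell-1}})+1$ and thus $l(\col_{\rtau_2})=l(\col_{\rtau_1})+m$. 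Combining $l(w)\leq m$, as $w$ is a product of $m$ transpositions, with $l(w)\geq l(\col_{\rtau_2})-l(\col_{\rtau_1})=m$, which follows from subadditivity of $l$ applied to $\col_{\rtau_2}=w\,\col_{\rtau_1}$, gives $l(w)=m$. Hence $s_{b_m}\cdots s_{b_1}$ is a reduced word for $w$.

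Finally I conclude by well-definedness. Any reduced word $s_{i_1}\cdots s_{i_p}$ for $w$ has $p=l(w)=m$, and by the Word Property \cite[Theorem 3.3.1]{bjorner-brenti} it is connected to $s_{b_m}\cdots s_{b_1}$ through braid and commutation moves; since the $\pi_i$ satisfy exactly these relations by Theorem~\ref{the:0heckerels}, the operators $\pi_{i_1}\cdots\pi_{i_p}$ and $\pi_{b_m}\cdots\pi_{b_1}$ agree, and applying them to $\rtau_1$ yields $\pi_{i_1}\cdots\pi_{i_p}(\rtau_1)=\pi_{b_m}\cdots\pi_{b_1}(\rtau_1)=U_m=\rtau_2$. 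I expect the main obstacle to be the length bookkeeping in the third paragraph: one must check carefully that a non-attacking descent really forces $b_\ell$ to precede $b_\ell+1$ in the column word, so that each genuine flip raises column-word length by exactly one, since it is this monotonicity together with subadditivity of $l$ that pins down $l(w)=m$ and lets the $0$-Hecke relations finish the argument.
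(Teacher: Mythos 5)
Your proof is correct. It rests on the same two pillars as the paper's argument --- the well-definedness of $\pi_{\sigma}$ for reduced words (Theorem~\ref{the:0heckerels} plus the Word Property) and the fact that a genuine non-attacking-descent flip multiplies the column word by $s_i$ on the left --- but it handles the key reduction step differently. The paper collapses the whole product $\pi_{j_1}\cdots\pi_{j_r}$ into a single basis element $\pi_{\tilde{\sigma}}$ of $\hn$ via the multiplication rule, and then asserts that $\tilde{\sigma}\col_{\rtau_1}=\col_{\rtau_2}$, so that any reduced word for $\col_{\rtau_2}(\col_{\rtau_1})^{-1}$ is automatically one for $\tilde{\sigma}$. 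You instead work at the level of the action: you discard the trivial applications, record only the genuine flips $b_1,\ldots,b_m$, observe that $\col_{\rtau_2}(\col_{\rtau_1})^{-1}=s_{b_m}\cdots s_{b_1}$ by construction, and then prove this word is reduced by the length bookkeeping $l(\col_{U_\ell})=l(\col_{U_{\ell-1}})+1$ combined with subadditivity. What your route buys is precision at exactly the point the paper glosses over: the assertion $\tilde{\sigma}\col_{\rtau_1}=\col_{\rtau_2}$ is not literally correct when some $\pi_{j_k}$ act trivially (already for $r=1$ and $j_1\notin\des(\rtau_1)$ one gets $\tilde{\sigma}=s_{j_1}$ while $\col_{\rtau_2}=\col_{\rtau_1}$, so $\tilde{\sigma}$ and $\col_{\rtau_2}(\col_{\rtau_1})^{-1}$ are different permutations), whereas your word $s_{b_m}\cdots s_{b_1}$ equals $\col_{\rtau_2}(\col_{\rtau_1})^{-1}$ exactly. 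The price is that you must re-derive inline the length-increment fact that the paper only records afterwards as Lemma~\ref{lem:precimpliesbruhat}, but you do this correctly: a non-attacking descent $b_\ell$ does force $b_\ell$ to precede $b_\ell+1$ in the column reading word, since a descent places $b_\ell+1$ weakly right of $b_\ell$ and the non-attacking condition rules out equality of columns. The final appeal to the Word Property and the braid and commutation relations for the $\pi_i$ (the quadratic relation being unnecessary for connecting reduced words) is exactly the mechanism the paper uses to define $\pi_{\sigma}$, so the conclusion follows.
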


\begin{proof}
Note that, given a permutation $\sigma \in \sgrp _n$, by Theorem~\ref{the:0heckerels} and the rules for multiplying generators of $\hn$ we have that
\begin{eqnarray*}
\pi_i\pi_{\sigma}=\left\lbrace\begin{array}{ll}\pi_{s_i\sigma} & l(s_i\sigma)>l(\sigma)\\ \pi_{\sigma} & l(s_i\sigma)<l(\sigma)\end{array}\right.
\end{eqnarray*}is satisfied.
Thus, if $\pi_{j_1}\cdots \pi_{j_r}(\rtau_1)=\rtau_2$ then we can find a permutation $\tilde{\sigma}\in \sgrp_n$ such that $\pi_{j_1}\cdots \pi_{j_r}=\pi_{\tilde{\sigma}}$ in $H_{n}(0)$. Moreover, since $\pi _{\tilde{\sigma}}(\rtau _1)=\rtau _2$ we have that $\tilde{\sigma}\col _{\rtau _1}=\col _{\rtau _2}$. Thus,
 any reduced word $s_{i_1}\cdots s_{i_p}$ for $\col_{\rtau_2}(\col_{\rtau_1})^{-1}$ is a reduced word for $\tilde{\sigma}$ and $\pi_{i_1}\cdots\pi_{i_p}(\rtau_1)=\pi_{\tilde{\sigma}}(\rtau_1)=\rtau_2$.
\end{proof}

\begin{lemma}\label{lem:precimpliesbruhat}Let $\alpha\vDash n$ and $\rtau_1\in \SRCT(\alpha)$ such that $i\in \des(\rtau_1)$. Suppose further that $i$ is a non-attacking descent in $\rtau_1$. If $\pi_i(\rtau_1)=\rtau_2$, then $l(\col_{\rtau_2})=l(\col_{\rtau_1})+1$ and $\col_{\rtau_1} \leq _{L} \col_{\rtau_2}$.
\end{lemma}

\begin{proof} If $i\in \des (\rtau _1)$ and is non-attacking, then by definition $i$ occurs to the left of $i+1$ in $\col _{\rtau _1}$. Thus $s_i \col _{\rtau _1}= \col _{\rtau _2}$ contains one more inversion than $\col _{\rtau _1}$ and $Inv (\col_{\rtau_1}) \subseteq Inv (\col_{\rtau_2})$, and the result follows.
\end{proof}

We note down an immediate corollary concerning the weak Bruhat order from the lemma above.

\begin{corollary}\label{cor:precimpliesweakleftbruhat} If we obtain an SRCT $\rtau _2$ starting from an SRCT $\rtau _1$ through a sequence of flips, then $\col _{\rtau _1} \leq _L \col _{\rtau _2}$.
\end{corollary}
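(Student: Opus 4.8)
The plan is to reduce the statement to a chain of single flips and then invoke Lemma~\ref{lem:precimpliesbruhat} together with transitivity of the weak Bruhat order. Suppose $\rtau_2=\pi_{j_1}\cdots\pi_{j_r}(\rtau_1)$ for some indices $j_1,\ldots,j_r$, and record the intermediate fillings by setting $\rho_0=\rtau_1$ and $\rho_k=\pi_{j_{r-k+1}}(\rho_{k-1})$ for $1\leq k\leq r$, so that $\rho_r=\rtau_2$. The goal is then to show $\col_{\rho_{k-1}}\leq_{L}\col_{\rho_k}$ for each $k$ and chain these relations together.

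First I would observe that every $\rho_k$ is a genuine SRCT rather than $0$. Since $\pi_i(0)=0$ for all $i$, the appearance of $0$ at any intermediate stage would propagate and force $\rtau_2=0$, contradicting that $\rtau_2$ is an SRCT. In particular, no flip in the chain is applied at an attacking descent, so the middle case of the definition~\eqref{eq:pi} of $\pi_j$ never arises along this chain.

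Next I would analyze a single step $\rho_k=\pi_j(\rho_{k-1})$. By~\eqref{eq:pi} and the previous paragraph, only two possibilities remain: either $j\notin\des(\rho_{k-1})$, in which case $\rho_k=\rho_{k-1}$ and hence $\col_{\rho_{k-1}}=\col_{\rho_k}$; or $j$ is a non-attacking descent of $\rho_{k-1}$, in which case Lemma~\ref{lem:precimpliesbruhat} gives $\col_{\rho_{k-1}}\leq_{L}\col_{\rho_k}$. In either case $\col_{\rho_{k-1}}\leq_{L}\col_{\rho_k}$.

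Finally, since $\leq_{L}$ is a partial order, chaining these inequalities for $k=1,\ldots,r$ and using transitivity yields $\col_{\rtau_1}=\col_{\rho_0}\leq_{L}\col_{\rho_r}=\col_{\rtau_2}$, as desired. The only point requiring care—and the closest thing to an obstacle—is the bookkeeping that guarantees each intermediate filling is a nonzero SRCT, so that Lemma~\ref{lem:precimpliesbruhat} is genuinely applicable at every swap; once this is secured, the corollary is immediate from the single-flip result and transitivity of $\leq_{L}$.
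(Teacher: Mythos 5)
Your proof is correct and follows the same route the paper intends: the paper states this corollary without proof as an immediate consequence of Lemma~\ref{lem:precimpliesbruhat}, and your argument simply makes explicit the reduction to single flips, the observation that no intermediate step can be $0$ or an attacking descent, and the appeal to transitivity of $\leq_L$. No gaps.
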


From the above lemma it also follows that if we can obtain an SRCT $\rtau _2$ starting from an SRCT $\rtau _1$ through a sequence of flips, where $\rtau _1 \neq \rtau _2$, then there does not exist a sequence of flips to obtain $\rtau _1$ starting from $\rtau _2$. Now we are in a position to define a new partial order on SRCTs of the same shape.

\begin{definition}\label{def:partialorder}Let $\alpha \vDash n$ and $\rtau _1, \rtau _2 \in \SRCT (\alpha)$. Define a partial order $\po $ on $\SRCT(\alpha)$ by $\rtau _1 \po \rtau _2$ if and only if there exists a permutation $\sigma \in \sgrp _n$ such that $\pi _\sigma (\rtau _1) = \rtau _2$.
\end{definition}

The reflexivity and transitivity are immediate from the definition. The anti-symmetricity follows from the observation preceding the above definition. Thus we indeed have a \emph{partial order}.

\section{\texorpdfstring{$0$-Hecke}{0-Hecke} modules from SRCTs and quasisymmetric Schur functions}\label{sec:0Hecke}
We will now use the partial order from the previous section to define an $\hn$-module indexed by a composition $\alpha \vDash n$, whose quasisymmetric characteristic is the quasisymmetric Schur function $\qs _\alpha$. More precisely, given a composition $\alpha \vDash n$ extend the partial order $\po$ on $\SRCT (\alpha)$ to an arbitrary total order on $\SRCT (\alpha)$, denoted by $\po ^t$. Let the elements of $\SRCT (\alpha)$ under $\po ^t$ be $$\{\rtau _1 \prec _\alpha ^t  \cdots \prec _\alpha ^t \rtau _m  \}.$$ Now let $\mathcal{V}_{\rtau_i}$ be the $\mathbb{C}$-linear span 
 $$\mathcal{V}_{\rtau_i} = \spam \{ \rtau _j \suchthat \rtau _i \po ^t \rtau _j \}\quad \text{ for } 1\leq i \leq m$$
and observe that the definition of $\po ^t$ implies that $\pi_{\sigma}\mathcal{V}_{\rtau_i}\subseteq \mathcal{V}_{\rtau_i}$ for any $\sigma\in \sgrp_n$. This observation combined with the fact that the operators $\{\pi_i\}_{i=1}^{n-1}$ satisfy the same relations as $\hn$ by Theorem~\ref{the:0heckerels} gives the following result.
\begin{lemma}\label{lem:hnmodule}
$\mathcal{V}_{\rtau_i}$ is an $\hn$-module.
\end{lemma}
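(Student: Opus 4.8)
The plan is to show that $\mathcal{V}_{\rtau_i}$ is closed under the action of each generator $\pi_j$ for $1 \leq j \leq n-1$, since this suffices to make it an $\hn$-submodule of the free module on $\SRCT(\alpha)$ (the operators already satisfy the $\hn$-relations by Theorem~\ref{the:0heckerels}). Because $\mathcal{V}_{\rtau_i}$ is spanned by $\{\rtau_j \suchthat \rtau_i \po^t \rtau_j\}$, it is enough to verify that for every basis element $\rtau_j$ with $\rtau_i \po^t \rtau_j$ and every $j'$, the image $\pi_{j'}(\rtau_j)$ again lies in $\mathcal{V}_{\rtau_i}$.

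First I would fix such a $\rtau_j$ and examine the three possible values of $\pi_{j'}(\rtau_j)$ from the definition~\eqref{eq:pi}. If $j' \notin \des(\rtau_j)$, then $\pi_{j'}(\rtau_j) = \rtau_j \in \mathcal{V}_{\rtau_i}$ trivially. If $j'$ is an attacking descent, then $\pi_{j'}(\rtau_j) = 0 \in \mathcal{V}_{\rtau_i}$. The only substantive case is when $j'$ is a non-attacking descent, so that $\pi_{j'}(\rtau_j) = s_{j'}(\rtau_j) =: \rtau_k$ is another SRCT of shape $\alpha$ (using Lemma~\ref{lem:switchSRCT} to know it is genuinely an SRCT). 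Here I must show $\rtau_i \po^t \rtau_k$.

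The key observation is that $\rtau_j \po \rtau_k$ holds by Definition~\ref{def:partialorder}, since $\rtau_k = \pi_{j'}(\rtau_j) = \pi_{s_{j'}}(\rtau_j)$ exhibits the required permutation. Combined with $\rtau_i \po^t \rtau_j$, I would argue that $\rtau_i \po^t \rtau_k$ as follows: since $\po^t$ is a total order extending $\po$, and since $\rtau_j \po \rtau_k$ implies $\rtau_j \po^t \rtau_k$, transitivity of $\po^t$ gives $\rtau_i \po^t \rtau_k$. Hence $\rtau_k \in \mathcal{V}_{\rtau_i}$, as desired. This is exactly the content of the remark preceding the lemma that $\pi_\sigma \mathcal{V}_{\rtau_i} \subseteq \mathcal{V}_{\rtau_i}$, and the proof essentially just records why that observation holds.

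I do not expect a serious obstacle here, as the argument is a direct unwinding of the definitions; the only point requiring care is ensuring that $\po^t$ is a \emph{linear} extension of $\po$, so that $\rtau_j \po \rtau_k$ genuinely forces $\rtau_j \po^t \rtau_k$ rather than the reverse. This is guaranteed by the construction of $\po^t$ in the paragraph preceding the lemma. With closure under every $\pi_{j'}$ established and the $\hn$-relations already in hand from Theorem~\ref{the:0heckerels}, it follows that $\mathcal{V}_{\rtau_i}$ carries a well-defined $\hn$-action, completing the proof.
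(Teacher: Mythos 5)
Your proof is correct and follows essentially the same route as the paper, which simply observes that $\pi_\sigma \mathcal{V}_{\rtau_i} \subseteq \mathcal{V}_{\rtau_i}$ because $\po^t$ linearly extends $\po$, and then invokes Theorem~\ref{the:0heckerels} for the $\hn$-relations. Your version just unpacks that observation case by case, correctly isolating the one substantive case (a non-attacking descent) and the one point needing care (that $\po^t$ refines $\po$, so $\rtau_j \po \rtau_k$ forces $\rtau_j \po^t \rtau_k$).
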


Given the above construction, define $\mathcal{V}_{\rtau_{m+1}}$ to be the trivial $0$ module, and consider the following filtration of $\hn$-modules.
$$
\mathcal{V}_{\rtau_{m+1}}\subset \mathcal{V}_{\rtau_m} \subset \cdots \subset \mathcal{V}_{\rtau_{2}}\subset \mathcal{V}_{\rtau_1} 
$$
Then, the quotient modules $\mathcal{V}_{\rtau_{i-1}}/\mathcal{V}_{\rtau_{i}}$ for $2\leq i\leq m+1$ are $1$-dimensional $\hn$-modules spanned by $\rtau_{i-1}$. Furthermore, they are irreducible modules. We can identify which $\hn$-module they are by looking at the action of $\pi_{j}$ on $\mathcal{V}_{\rtau_{i-1}}/\mathcal{V}_{\rtau_{i}}$ for $1\leq j\leq n-1$. We have
\begin{eqnarray*}
\pi_{j}(\rtau_{i-1})&=& \left \lbrace \begin{array}{ll}0 & j\in \des(\rtau_{i-1})\\\rtau_{i-1} & \text{otherwise.}\end{array}\right.
\end{eqnarray*}
Thus, as an $\hn$-representation, $\mathcal{V}_{\rtau_{i-1}}/\mathcal{V}_{\rtau_{i}}$ is isomorphic to the irreducible representation $\mathbf{F}_{\beta}$ where $\beta$ is the composition corresponding to the descent set $\des(\rtau_{i-1})$. Thus, by Subsection~\ref{subsec:reps}
\begin{eqnarray*}
ch([\mathcal{V}_{\rtau_{i-1}}/\mathcal{V}_{\rtau_{i}}])=F_{\beta}.
\end{eqnarray*}
From this it follows that 
\begin{eqnarray*}
ch([\mathcal{V}_{\rtau_1}])&=&\displaystyle\sum_{i=2}^{m+1}ch([\mathcal{V}_{\rtau_{i-1}}/\mathcal{V}_{\rtau_{i}}])\nonumber\\ 
&=& \displaystyle \sum_{i=2}^{m+1}F_{\comp(\rtau_{i-1})}\nonumber\\ 
&=& \displaystyle \sum_{\rtau \in \SRCT(\alpha)}F_{\comp(\rtau)}\nonumber\\ 
&=& \qs_{\alpha}.
\end{eqnarray*}
Consequently, we have established the following.

\begin{theorem}\label{the:bigone}
Let $\alpha \vDash n$ and $\rtau _1 \in \SRCT (\alpha)$ be the minimal element under the total order $\po ^t$. Then $\mathcal{V}_{\rtau_1}$ is an $\hn$-module whose quasisymmetric characteristic is the quasisymmetric Schur function $\qs_{\alpha}$.\end{theorem}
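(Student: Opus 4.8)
The plan is to build an explicit filtration of the module $\mathcal{V}_{\rtau_1}$ whose successive quotients are the one-dimensional irreducibles $\mathbf{F}_\beta$, and then read off the quasisymmetric characteristic additively using the short exact sequence property of $ch$. This is precisely the standard strategy for computing characteristics in the $0$-Hecke setting, and the combinatorics of Theorem~\ref{the:0heckerels} together with the total order $\po^t$ makes it work cleanly.

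First I would fix $\alpha \vDash n$ and list the SRCTs of shape $\alpha$ as $\rtau_1 \prec_\alpha^t \cdots \prec_\alpha^t \rtau_m$ according to the total order $\po^t$ extending $\po$. For each $i$ I set $\mathcal{V}_{\rtau_i} = \spam\{\rtau_j \suchthat \rtau_i \po^t \rtau_j\}$. The crucial structural point is that each $\mathcal{V}_{\rtau_i}$ is closed under the flips $\pi_\sigma$: this follows because $\pi_i$ either fixes a tableau, sends it to $0$, or sends it to a strictly $\po$-larger tableau (via Corollary~\ref{cor:precimpliesweakleftbruhat} and Lemma~\ref{lem:precimpliesbruhat}, which guarantee a flip only increases position in the partial order), so all outputs remain weakly above $\rtau_i$ in $\po^t$. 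Hence each $\mathcal{V}_{\rtau_i}$ is an $\hn$-module by Lemma~\ref{lem:hnmodule}, and we obtain the nested filtration with one-dimensional quotients $\mathcal{V}_{\rtau_{i-1}}/\mathcal{V}_{\rtau_i}$ spanned by the image of $\rtau_{i-1}$.

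Next I would identify each quotient as an irreducible. On the one-dimensional space spanned by $\rtau_{i-1}$ modulo $\mathcal{V}_{\rtau_i}$, any flip $\pi_j$ that would move $\rtau_{i-1}$ strictly upward in $\po$ lands inside $\mathcal{V}_{\rtau_i}$ and thus acts as $0$; concretely $\pi_j(\rtau_{i-1}) = 0$ when $j \in \des(\rtau_{i-1})$ (whether the descent is attacking, giving genuine $0$, or non-attacking, giving a strictly larger tableau already in $\mathcal{V}_{\rtau_i}$), and $\pi_j(\rtau_{i-1}) = \rtau_{i-1}$ when $j \notin \des(\rtau_{i-1})$. Comparing with the defining action of $\mathbf{F}_\beta$ in Subsection~\ref{subsec:reps}, where $T_j$ kills $v_\beta$ exactly when $j \in \set(\beta)$, I conclude the quotient is isomorphic to $\mathbf{F}_\beta$ for $\beta = \comp(\des(\rtau_{i-1})) = \comp(\rtau_{i-1})$, so $ch([\mathcal{V}_{\rtau_{i-1}}/\mathcal{V}_{\rtau_i}]) = F_{\comp(\rtau_{i-1})}$.

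Finally I would assemble the answer. Applying the additivity of $ch$ along the short exact sequences $0 \to \mathcal{V}_{\rtau_i} \to \mathcal{V}_{\rtau_{i-1}} \to \mathcal{V}_{\rtau_{i-1}}/\mathcal{V}_{\rtau_i} \to 0$ telescopes to give
\begin{eqnarray*}
ch([\mathcal{V}_{\rtau_1}]) = \sum_{i=2}^{m+1} F_{\comp(\rtau_{i-1})} = \sum_{\rtau \in \SRCT(\alpha)} F_{\comp(\rtau)} = \qs_\alpha,
\end{eqnarray*}
the last equality being Definition~\ref{def:Squasisymmetric}. The main obstacle, and the step deserving the most care, is verifying that the flips genuinely preserve the subspaces $\mathcal{V}_{\rtau_i}$ and that the induced action on each quotient matches $\mathbf{F}_\beta$ on the nose; this rests entirely on the fact that non-attacking descent flips strictly increase the $\po$-position (so they vanish in the quotient alongside the attacking descents that already yield $0$), which is exactly the content established in Section~\ref{sec:partialorder}.
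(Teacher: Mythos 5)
Your proposal is correct and follows essentially the same route as the paper: the same filtration $\mathcal{V}_{\rtau_{m+1}}\subset\cdots\subset\mathcal{V}_{\rtau_1}$ built from the total order $\po^t$, the same identification of each one-dimensional quotient with $\mathbf{F}_{\comp(\rtau_{i-1})}$ via the observation that non-attacking descent flips land in the lower filtration piece, and the same telescoping of $ch$ to obtain $\qs_\alpha$.
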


Henceforth, if given such a $\rtau _1 \in \SRCT (\alpha)$ where $\alpha \vDash n$, we will denote the $\hn$-module $\mathcal{V}_{\rtau_1}$ by $\smodule_{\alpha}$, that is, $\smodule_{\alpha}:=\mathcal{V}_{\rtau_1}$ and
\begin{equation}\label{eq:qsmodule}ch([\smodule_{\alpha}])= \qs _\alpha.\end{equation}

\section{Source and sink tableaux, and the weak Bruhat order}\label{sec:sourcesink}In this section we define an equivalence relation that will be essential when determining indecomposability of $\hn$-modules in the next section. Additionally, we study two types of tableaux, called source and sink tableaux, which will allow us to realise the equivalence classes as subintervals of the weak Bruhat order on $\sgrp _n$. To facilitate this we first need the concept of standardization.

\begin{definition}\label{def:stadardization} Given a word $w=w_1\cdots w_n$ such that $w_1, \ldots, w_n$ are distinct positive integers, we say that the \emph{standardization} of $w$ is the permutation $\sigma \in \sgrp _n$ such that $\sigma(i) < \sigma (j)$ if and only if $w_i <w_j$ for $1\leq i,j \leq n$.
\end{definition}

\begin{definition}\label{def:st} Let $\alpha \vDash n$ whose largest part is $\alpha _{max}$, and $\rtau \in \SRCT(\alpha)$, whose entries in column $i$ for $1\leq i \leq \alpha _{max}$ read from top to bottom are some word $w^i$. Then we say the \emph{standardized $i$-th column word} of $\rtau$, denoted by $\st_i(\rtau)$ is the standardization of $w^i$.

Furthermore, we define the \emph{standardized column word} of $\rtau$, denoted by $\st (\rtau)$ to be the word
$$\st (\rtau) = \st_1(\rtau) \  \st_2(\rtau) \cdots \st_{\alpha _{max}}(\rtau).$$
\end{definition}

\begin{example}\label{ex:st} If $\rtau$ is the SRCT below from Example~\ref{ex:SRCT3432}, then $\st (\rtau ) = 1234 \ 1243\ 231\ 1.$
$$\tableau{5&4&2\\8&7&6&3\\11&10&1\\12&9}$$
\end{example}

Note that since the number of entries in each column of an SRCT $\rtau$ of shape $\alpha$ weakly decreases, in order to recover $\st _1$ from $\st(\rtau)$ we need only search for the first instance when an integer is repeated in order to determine the beginning of $\st _2$, and then iterate this to recover every $\st _j$ for $1\leq j \leq  {\alpha _{max}}$.

With this definition of standardized column word in hand we will now define a natural but crucial equivalence relation, whose reflexivity, symmetricity and transitivity are immediate.

\begin{definition}\label{def:equiv} Let $\alpha \vDash n$ and $\rtau_1, \rtau _2 \in \SRCT (\alpha)$. Define an equivalence relation $\equiva$ on $\SRCT(\alpha)$ by 
$$\rtau _1 \equiva \rtau _2 \text{ if and only if } \st(\rtau _1)=\st(\rtau _2).$$
\end{definition}

\begin{example}\label{ex:equiv}Note the following SRCTs are equivalent under $\sim _{(3,2,4)}$, as for every $\rtau$ below we have that $\st (\rtau ) = 123\ 123\ 12\ 1.$$$
\tableau{
 {3}&2&1\\{5}&4\\9&8&7&6
}
\hspace{3mm}
\tableau{
 {3}&2&1\\ {6}&4 \\9&8&7&5
}
\hspace{3mm}
\tableau{
 {3}&2&1\\ {7}& {4}\\9&8&6&5
}
\hspace{3mm}
\tableau{
 {3}&2&1\\ {7}& {5}\\9&8&6&4
}
\hspace{3mm}
\tableau{
3&2&1\\6&5\\9&8&7&4
}
$$
\medskip
$$
\tableau{
4&2&1\\6&5\\9&8&7&3
}
\hspace{3mm}
\tableau{
 {4}& {2}&1\\ {7}& {5}\\9&8&6&3
}
\hspace{3mm}
\tableau{
4&3&1\\6&5\\9&8&7&2
}
\hspace{3mm}
\tableau{
 {4}&3& {1}\\ {7}& {5}\\9&8&6&2
}
$$
\end{example}

Suppose given a composition $\alpha \vDash n$ the equivalence classes with respect to $\equiva$ are $E_1,E_2,\cdots, E_k$. Let $\smodule_{\alpha,E_i}$ denote the $\mathbb{C}$-linear span of all SRCTs in $E_i$ for $i=1, \ldots, k$. Then we get the following isomorphism of vector spaces
\begin{equation}\label{eq:Ssum}
\smodule_{\alpha}\cong\bigoplus_{i=1}^{k}\smodule_{\alpha,E_i}
\end{equation}
since the equivalence classes are disjoint and their union is $\SRCT(\alpha)$. This isomorphism is actually an isomorphism of $\hn$-modules, as the following lemma implies immediately. 

\begin{lemma}\label{lem:Emodule} Let $E_j$ for $j= 1, \ldots , k$ be the equivalence classes under $\equiva$ for $\alpha \vDash n$. Then
for all $i$ such that $1\leq i\leq n-1$, we have that $$\pi_i(\smodule_{\alpha,E_j}) \subseteq \smodule_{\alpha,E_j}$$for any $1\leq j\leq k$.
\end{lemma}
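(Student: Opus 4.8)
The plan is to show that the operator $\pi_i$ cannot move an SRCT out of its equivalence class under $\equiva$, i.e. that applying $\pi_i$ either kills the tableau, fixes it, or produces a tableau with the same standardized column word. Since $\smodule_{\alpha,E_j}$ is spanned by the SRCTs in $E_j$, and since $0$ lies in every subspace, it suffices to check this on each basis element $\rtau \in E_j$. Recalling the definition of $\pi_i$ in \eqref{eq:pi}, there are only three possibilities: if $i\notin\des(\rtau)$ then $\pi_i(\rtau)=\rtau\in E_j$; if $i\in\des(\rtau)$ with $i,i+1$ attacking then $\pi_i(\rtau)=0\in\smodule_{\alpha,E_j}$; and the only substantive case is $i\in\des(\rtau)$ with $i,i+1$ non-attacking, where $\pi_i(\rtau)=s_i(\rtau)$. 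So the whole lemma reduces to proving that $\st(s_i(\rtau))=\st(\rtau)$ in this non-attacking case.

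First I would unwind what $s_i$ does at the level of columns. Swapping the entries $i$ and $i+1$ only affects the column(s) in which these two values sit. By Lemma~\ref{lem:adjacentnonattacking} and the description of attacking pairs just after Definition~\ref{def:attack}, a non-attacking descent forces $i$ and $i+1$ to lie in \emph{different} columns, say $i$ in column $k$ and $i+1$ in column $k+1$ with $k\geq 2$, and with $i+1$ strictly northeast of $i$. (The two-values-in-the-same-column scenario is exactly the attacking case, so it is excluded here.) Thus $s_i$ removes $i$ from column $k$ and replaces it by $i+1$ there, while simultaneously removing $i+1$ from column $k+1$ and replacing it by $i$ there. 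Every other column is untouched, so $\st_j(s_i(\rtau))=\st_j(\rtau)$ for all $j\notin\{k,k+1\}$ automatically.

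Next I would verify the two affected columns. The key observation is that $i$ and $i+1$ are \emph{consecutive} integers, so within any single column word, replacing the occurrence of $i$ by $i+1$ (when no $i+1$ is present in that column) does not change the relative order of the column's entries, and hence does not change its standardization; the same holds for replacing $i+1$ by $i$ when no $i$ is present. Here is where I must confirm the hypothesis that column $k$ contains $i$ but not $i+1$, and column $k+1$ contains $i+1$ but not $i$ --- which holds precisely because $i$ and $i+1$ occupy distinct columns in the non-attacking case. Granting this, $\st_k(s_i(\rtau))=\st_k(\rtau)$ and $\st_{k+1}(s_i(\rtau))=\st_{k+1}(\rtau)$, so concatenating over all columns gives $\st(s_i(\rtau))=\st(\rtau)$, whence $s_i(\rtau)\equiva\rtau$ and $s_i(\rtau)\in E_j$. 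This completes all cases.

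I expect the main obstacle to be the careful bookkeeping in the non-attacking case: one must be certain that the swap really is confined to columns $k$ and $k+1$ and that neither of those columns already contains the other of the two values $i,i+1$, since otherwise the within-column relative order could change and the standardization could shift. This is controlled by Lemma~\ref{lem:adjacentnonattacking} together with Lemma~\ref{lem:switchSRCT}(1), which already guarantees that $s_i(\rtau)$ is again a genuine SRCT of shape $\alpha$; the standardization-invariance is then the additional content that pins it inside the same equivalence class. Everything else is a routine reduction to basis elements and an appeal to the definition of $\pi_i$.
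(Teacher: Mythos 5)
Your proof is correct and follows essentially the same route as the paper: reduce to the non-attacking descent case and observe that, since $i$ and $i+1$ then occupy distinct columns, swapping them preserves the relative order of the entries within each affected column and hence the standardized column word. (The only cosmetic slip is labelling the two columns $k$ and $k+1$ with $k\geq 2$ --- a non-attacking descent need not have $i$ and $i+1$ in adjacent columns --- but nothing in your argument actually uses adjacency.)
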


\begin{proof}Consider $\rtau_1\in E_j$, and suppose further that $\pi_i(\rtau_1)=\rtau_2$ for some positive integer $i$. In particular we are assuming that $i$ is a non-attacking descent in $\rtau_1$. Therefore, all the entries strictly greater than $i$ in the same column as $i$ in $\rtau_1$ are actually all strictly greater than $i+1$. Also, all entries strictly lesser than $i+1$ in the same column as $i+1$ in $\rtau_1$ are actually all strictly lesser than $i$. Thus, once the positions of $i$ and $i+1$ in $\rtau_1$ are interchanged to obtain $\rtau_2$, we have that $\st (\rtau_2)=\st (\rtau_1)$. Thus, $\rtau_2$ belongs to $E_j$ as well.
\end{proof}

Now fix an equivalence class $E$. Given $\rtau \in E$, a removable node of $\alpha$ is called a \textit{distinguished removable node} of $\rtau$ if it contains the smallest entry in that column. Since all SRCTs in $E$ have the same standardized column word, it is clear that the distinguished removable nodes only depend on the equivalence class $E$, and not on the individual tableaux therein. Thus, we can define a set of positive integers, $DRN(\alpha,E)$ as follows.
\begin{align*}
DRN(\alpha,E)=\{r \suchthat & \text{ there exists a distinguished removable node in column } r\\&\text{ in any tableau in } E \}.
\end{align*}
Note that the set $DRN(\alpha,E)$ is clearly a non-empty set, as any cell that contains $1$ in any tableau in $E$ is a distinguished removable node.
Furthermore, there is at most one distinguished removable node in any given column of any tableau in $E$.

Next, we discuss two important classes of SRCTs that will form special representatives of each equivalence class.

\begin{definition}\label{def:source}
Let $\alpha\vDash n$. An SRCT $\rtau$ of shape $\alpha$ is said to be a \emph{source} tableau if it satisfies the condition that for every $i\notin \des(\rtau)$ where $i\neq n$, we have that $i+1$ lies to the immediate left of $i$.
\end{definition}
\begin{definition}\label{def:sink}
Let $\alpha\vDash n$. An SRCT $\rtau$ of shape $\alpha$ is said to be a \emph{sink} tableau if it satisfies the condition that for every $i\in \des(\rtau)$, we have that $i$ and $i+1$ are attacking.  
\end{definition}

\begin{example}\label{ex:sourcesink}
Let $\alpha=(4,3,2,3)$. We list an example each of a source tableau (left) and sink tableau (right) of shape $\alpha$.
\begin{eqnarray*}
\tableau{
7&6&5&4\\8&3&2\\9&1\\12&11&10
}\hspace{5mm}
\tableau{
8&6&3&1\\9&5&2\\10&4\\12&11&7
}
\end{eqnarray*}Observe that they both have standardized column word $1234\ 3214\ 213\ 1$ and so are in the same equivalence class $E$. They both have a distinguished removable node in columns $2,3,4$ and hence
$$DRN((4,3,2,3), E) = \{2,3,4\}.$$
\end{example}

The motivation for these names is immediate from the following two lemmas. 

\begin{lemma}\label{lem:source}
Let $\alpha\vDash n$ and $\rtau_1\in \SRCT(\alpha)$. Then the following are equivalent.
\begin{enumerate}
\item $\rtau_1$ is a source tableau.
\item There does not exist an SRCT $\rtau_2\neq \rtau_1$, such that $\pi_i(\rtau_2)=\rtau_1$ for some positive integer $i$.
\end{enumerate}
\end{lemma}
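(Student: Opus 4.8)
The plan is to prove both implications by reading condition (2) through the explicit description of the flips in \eqref{eq:pi}. The key preliminary observation is that if $\rtau_2 \neq \rtau_1$ and $\pi_i(\rtau_2) = \rtau_1$, then we can be in neither the first case of \eqref{eq:pi} (which forces $\rtau_1 = \rtau_2$) nor the second (which yields $0$); hence $i \in \des(\rtau_2)$, the entries $i$ and $i+1$ are non-attacking in $\rtau_2$, and $\rtau_1 = s_i(\rtau_2)$. Since $s_i$ is an involution this is the same as $\rtau_2 = s_i(\rtau_1)$. So condition (2) says precisely that no single non-attacking-descent swap produces $\rtau_1$ from another SRCT, and each direction amounts to comparing this with the source condition, which pins down the location of $i+1$ relative to $i$ whenever $i \notin \des(\rtau_1)$.

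For $(1) \Rightarrow (2)$ I would argue by contradiction. Assuming $\rtau_1$ is a source tableau and that such a $\rtau_2 \neq \rtau_1$ and $i$ exist, the observation above gives that $i+1$ lies strictly right of $i$ in $\rtau_2$, so after the swap $i+1$ lies strictly left of $i$ in $\rtau_1$; in particular $i \notin \des(\rtau_1)$, and $i \neq n$ automatically. The source hypothesis then forces $i+1$ to occupy the cell immediately left of $i$ in $\rtau_1$, so $i$ and $i+1$ share a row in adjacent columns. Undoing the swap puts $i$ immediately left of $i+1$ in the same row of $\rtau_2$, which violates the requirement that entries decrease along rows of an SRCT. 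This contradiction proves (2).

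For $(2) \Rightarrow (1)$ I would prove the contrapositive by constructing an explicit preimage. Assuming $\rtau_1$ is not a source tableau, there is an $i \notin \des(\rtau_1)$ with $i \neq n$ for which $i+1$ does not sit immediately left of $i$; equivalently $i$ is not in the cell immediately right of $i+1$. By Lemma~\ref{lem:switchSRCT}(2) the filling $\rtau_2 := s_i(\rtau_1)$ is again an SRCT of shape $\alpha$, distinct from $\rtau_1$, and since $i \notin \des(\rtau_1)$ places $i+1$ strictly left of $i$, after swapping we get $i \in \des(\rtau_2)$. It then remains to check that $i$ and $i+1$ are non-attacking in $\rtau_2$, for only then does $\pi_i(\rtau_2) = s_i(\rtau_2) = \rtau_1$, and this verification is the main obstacle. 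The same-column attacking case is impossible because $i$ and $i+1$ occupy different columns; the only remaining danger is the adjacent-column case, which, translated back to $\rtau_1$, would place $i+1$ one column left of and strictly above $i$. I would rule this out with the triple rule: applying it to the two rows containing $i+1$ and $i$ with $k$ equal to one less than the column of $i$, the inequality $i+1 > i$ forces a cell immediately right of $i+1$ whose entry is greater than $i$ yet, by row-decrease, smaller than $i+1$ --- an impossibility. Hence the attacking configuration never arises, $\pi_i(\rtau_2) = \rtau_1$, and the contrapositive is complete.
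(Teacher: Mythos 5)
Your proof is correct and follows essentially the same route as the paper: the forward direction by the same contradiction (the swap would put $i$ immediately left of $i+1$ in a row of $\rtau_2$), and the converse via Lemma~\ref{lem:switchSRCT}(2). The only difference is that you explicitly verify that $i$ is a \emph{non-attacking} descent of $\rtau_2=s_i(\rtau_1)$ using the triple rule --- a detail the paper's proof leaves implicit --- and that verification is carried out correctly.
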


\begin{proof}
Assume first that $\rtau_1$ is a source tableau. Suppose, for the sake of contradiction, the existence of an SRCT $\rtau_2\neq \rtau_1$ such that $\pi_i(\rtau_2)=\rtau_1$. Then $i\in \des(\rtau_2)$, but $i\notin \des(\rtau_1)$. Since $i$ is clearly not $n$, we know that $i+1$ lies to the immediate left of $i$ in $\rtau_1$. But, if we obtained $\rtau_1$ from $\rtau_2$ by interchanging the positions of $i$ and $i+1$, we reach the conclusion that $i$ lies to the immediate left of $i+1$ in $\rtau_2$. But this clearly contradicts the fact that $\rtau_2$ is an SRCT.  

Now assume that there does not exist an SRCT $\rtau_2\neq \rtau_1$ such that $\pi_i(\rtau_2)=\rtau_1$ for some positive integer $i$. In view of the second part of Lemma~\ref{lem:switchSRCT}, it is immediate that $\rtau_1$ has the property that if $i$ is such that $i+1$ lies strictly left of $i$, that is $i\notin \des(\rtau _1)$, then $i+1$ actually lies to the immediate left of $i$. But this is the same as saying that $\rtau_1$ is a source tableau.
\end{proof}

\begin{lemma}\label{lem:sink}
Let $\alpha\vDash n$ and $\rtau_1\in \SRCT(\alpha)$. Then the following are equivalent.
\begin{enumerate}
\item $\rtau_1$ is a sink tableau. 
\item There does not exist an SRCT $\rtau_2\neq \rtau_1$, such that $\pi_i(\rtau_1)=\rtau_2$ for some positive integer $i$.
\end{enumerate}
\end{lemma}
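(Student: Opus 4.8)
The plan is to read off both implications directly from the definition of the flip operators in \eqref{eq:pi}, observing that condition (2) is precisely the assertion that for every $i$ the value $\pi_i(\rtau_1)$ is either $\rtau_1$ itself or $0$, and never a genuinely different SRCT. Inspecting \eqref{eq:pi}, the only branch that can output an SRCT distinct from $\rtau_1$ is the third one, namely when $i\in\des(\rtau_1)$ and $i,i+1$ are non-attacking, in which case $\pi_i(\rtau_1)=s_i(\rtau_1)$. So the whole lemma reduces to tracking exactly when that third branch is triggered.

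For the implication $(1)\Rightarrow(2)$, I would assume $\rtau_1$ is a sink tableau and fix any $i$ with $1\leq i\leq n-1$. If $i\notin\des(\rtau_1)$ then $\pi_i(\rtau_1)=\rtau_1$, while if $i\in\des(\rtau_1)$ the sink condition forces $i$ and $i+1$ to be attacking, so $\pi_i(\rtau_1)=0$. In neither case is $\pi_i(\rtau_1)$ an SRCT different from $\rtau_1$, so no such $\rtau_2$ exists and (2) holds. For the reverse implication I would argue by contrapositive: assume $\rtau_1$ is not a sink tableau, so there is some $i\in\des(\rtau_1)$ with $i$ and $i+1$ non-attacking. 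Then $\pi_i(\rtau_1)=s_i(\rtau_1)$, and part (1) of Lemma~\ref{lem:switchSRCT} guarantees $s_i(\rtau_1)\in\SRCT(\alpha)$. Since swapping the two distinct entries $i$ and $i+1$ changes the filling, $\rtau_2:=s_i(\rtau_1)\neq\rtau_1$ is an SRCT with $\pi_i(\rtau_1)=\rtau_2$, contradicting (2).

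I expect no genuine obstacle here; this lemma is the easier ``dual'' of Lemma~\ref{lem:source}. Whereas the source lemma concerns \emph{preimages} under the flips and therefore needs the second part of Lemma~\ref{lem:switchSRCT}, the sink lemma concerns only the \emph{image} of $\rtau_1$, so the first part of Lemma~\ref{lem:switchSRCT} suffices to certify that the swap $s_i(\rtau_1)$ lands back in $\SRCT(\alpha)$. The single point requiring a word of justification is that a non-attacking descent is exactly the hypothesis needed to invoke that first part, which holds by construction.
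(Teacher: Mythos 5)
Your proof is correct and follows essentially the same route as the paper's: the forward direction is the same case check on \eqref{eq:pi}, and the reverse direction is the paper's argument phrased contrapositively, with the added (and welcome) explicit citation of part (1) of Lemma~\ref{lem:switchSRCT} to certify that $s_i(\rtau_1)$ is indeed an SRCT distinct from $\rtau_1$.
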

\begin{proof}
Assume first that $\rtau_1$ is a sink tableau.
If $i\notin \des(\rtau_1)$, we know that $\pi_i(\rtau_1)=\rtau_1$. If, on the other hand, $i\in \des(\rtau_1)$, we know that $\pi_i(\rtau_1)=0$ as $i$ and $i+1$ are attacking in $\rtau_1$. Thus, there does not exist an SRCT $\rtau_2\neq \rtau_1$ such that $\pi_i(\rtau_1)=\rtau_2$ for some positive integer $i$.

Now assume that there does not exist an SRCT $\rtau_2\neq \rtau_1$ such that $\pi_i(\rtau_1)=\rtau_2$ for some positive integer $i$. This implies that $\pi_i(\rtau_1)$ either equals $\rtau_1$ or it equals $0$ for any $1\leq i\leq n-1$. This is the same as saying that any positive integer $1\leq i\leq n-1$ satisfies the condition that either $i\notin \des(\rtau_1)$ or $i$ is an attacking descent in $\rtau_1$. But this is the same as saying that $\rtau_1$ is a sink tableau.
\end{proof}

These two lemmas now allow us to determine some equivalence class structure.

\begin{lemma}\label{lem:sourcesinkexistence} Let $\alpha\vDash n$ and $E$ be an equivalence class under the equivalence relation $\equiva$. Then $E$ contains at least one source tableau and at least one sink tableau of shape $\alpha$.
\end{lemma}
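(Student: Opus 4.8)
The plan is to produce the required tableaux as extremizers of the length statistic $l(\col_{\rtau})$ over the finite set $E$, exploiting the fact that a single flip changes this statistic by exactly one and never leaves the equivalence class. First I would observe that $\SRCT(\alpha)$ is finite, hence so is $E$; this lets me choose $\rtau_{\min}\in E$ with $l(\col_{\rtau_{\min}})$ minimal among all tableaux in $E$, and $\rtau_{\max}\in E$ with $l(\col_{\rtau_{\max}})$ maximal. The claim will be that $\rtau_{\min}$ is a source tableau and $\rtau_{\max}$ is a sink tableau, which I establish via the characterizations in Lemmas~\ref{lem:source} and \ref{lem:sink}.

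For the source tableau I would argue by contradiction. If $\rtau_{\min}$ is not a source tableau, then by Lemma~\ref{lem:source} there is an SRCT $\rtau'\neq\rtau_{\min}$ and an index $i$ with $\pi_i(\rtau')=\rtau_{\min}$; here $i$ is necessarily a non-attacking descent of $\rtau'$. The key point is that this flip does not leave $E$: by Lemma~\ref{lem:Emodule} the operator $\pi_i$ carries each equivalence class into itself, and since the equivalence classes are disjoint and $\pi_i(\rtau')=\rtau_{\min}\in E$, the tableau $\rtau'$ must itself lie in $E$. (Equivalently, one invokes the computation in the proof of Lemma~\ref{lem:Emodule} showing that a non-attacking-descent flip preserves the standardized column word.) Now Lemma~\ref{lem:precimpliesbruhat} gives $l(\col_{\rtau_{\min}})=l(\col_{\rtau'})+1>l(\col_{\rtau'})$, contradicting the minimality of $l(\col_{\rtau_{\min}})$ over $E$. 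Hence $\rtau_{\min}$ is a source tableau.

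The sink tableau is entirely symmetric. If $\rtau_{\max}$ is not a sink tableau, Lemma~\ref{lem:sink} yields an SRCT $\rtau''\neq\rtau_{\max}$ and an index $i$ with $\pi_i(\rtau_{\max})=\rtau''$; this $\rtau''$ lies in $E$ by Lemma~\ref{lem:Emodule}, and Lemma~\ref{lem:precimpliesbruhat} gives $l(\col_{\rtau''})=l(\col_{\rtau_{\max}})+1>l(\col_{\rtau_{\max}})$, contradicting maximality. Thus $\rtau_{\max}$ is a sink tableau, and $E$ contains at least one source tableau and at least one sink tableau of shape $\alpha$.

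I do not expect a serious obstacle here, since the argument is a finite extremal principle combined with already-established lemmas; the one point requiring care is the verification that the witnessing tableau $\rtau'$ (respectively $\rtau''$) stays inside $E$, so that the length statistic is being compared within the same finite set, which is precisely what Lemma~\ref{lem:Emodule} guarantees. An alternative, essentially equivalent, formulation would replace the length statistic by the partial order $\po$ itself: since $E$ is a finite poset under $\po$ (antisymmetry being noted after Definition~\ref{def:partialorder}), it has minimal and maximal elements, and these are exactly the source and sink tableaux by Lemmas~\ref{lem:source} and \ref{lem:sink}.
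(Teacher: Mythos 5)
Your proposal is correct and is essentially the paper's own argument, just written out in full: the paper's proof likewise combines the finiteness of $E$, the fact from Lemma~\ref{lem:precimpliesbruhat} that a flip increases $l(\col_{\rtau})$ by exactly one, and the characterizations in Lemmas~\ref{lem:source} and \ref{lem:sink}. Your explicit check via Lemma~\ref{lem:Emodule} that the witnessing tableau stays inside $E$ is a detail the paper leaves implicit, but it is the same proof.
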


\begin{proof} We know by Lemma~\ref{lem:precimpliesbruhat} that if $\pi _i (\rtau _1)=\rtau _2$ then $l(\col_{\rtau_2})=l(\col_{\rtau_1})+1$. The result now follows by Lemmas~\ref{lem:source} and \ref{lem:sink}, and the fact that $|\SRCT (\alpha)|$ is finite, and hence so is the number of tableaux in $E$.
\end{proof}

Let $\rtau$ be a source tableau. Let $\rtau_{-1}$ be the SRCT obtained by first removing the cell containing $1$ from $\rtau$ and then subtracting $1$ from the remaining entries. Then it is easy to see that $\rtau_{-1}$ is also a source tableau.

Every equivalence class under the equivalence relation contains at least one source tableau by Lemma~\ref{lem:sourcesinkexistence}. The next proposition helps us prove that each equivalence class contains exactly one source tableau.

\begin{proposition}\label{prop:position_of_1_source}
Let $\rtau$ be an SRCT of shape $\alpha\vDash n$ belonging to an equivalence class $E$. Let $\rtau_{source}$ be a source tableau in $E$. Let $m$ be the smallest element in $DRN(\alpha,E)$. Then the distinguished removable node in column $m$ of $\rtau_{source}$ contains the entry $1$.
\end{proposition}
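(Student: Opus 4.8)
The plan is to prove the two inequalities $c\le m$ and $c\ge m$, where $c$ denotes the column of $\rtau_{source}$ containing $1$. The inequality $c\ge m$ is immediate: since $1$ is the global minimum it is in particular the smallest entry of its own column, and since $1$ always occupies a removable node (as observed after Example~\ref{ex:SRCT3432}), the cell of $1$ is a distinguished removable node. Hence $c\in DRN(\alpha,E)$, so $c\ge m=\min DRN(\alpha,E)$. All the content lies in the reverse inequality $c\le m$, which I would establish by contradiction, assuming $m<c$.

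The key device is to examine not column $m$ in isolation but the minimum over an initial segment of columns. Let $u$ be the smallest entry occurring anywhere in columns $1,2,\ldots,m$; this set of cells is nonempty because $m\in DRN(\alpha,E)$ forces column $m$ to contain a node. Say $u$ lies in column $r\le m$. By minimality, $u$ is the minimum of its own column $r$, and, crucially, $u-1$ cannot lie in any column $\le m$, since that would undercut $u$. If $u$ were $1$ then $1$ would sit in a column $\le m<c$, contradicting that $1$ is in column $c$; hence $u\ge 2$, the entry $u-1$ exists, and it lies in a column $>m\ge r$, i.e.\ strictly to the right of $u$.

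Now I feed this into the source condition. Because $u$ lies strictly left of $u-1$, the entry $u$ is not weakly right of $u-1$, so $u-1\notin\des(\rtau_{source})$; as $u-1\neq n$, Definition~\ref{def:source} forces $u$ to lie immediately to the left of $u-1$, putting $u-1$ in column $r+1$. Combined with $u-1$ lying in a column $>m$, this pins down $r=m$ and places $u-1$ at column $m+1$ in the same row as $u$. But then the cell of $u$ has a neighbour to its right, so it is not the final cell of its row and therefore is not a removable node. Since $u$ is exactly the minimum of column $m$, this says the minimum of column $m$ is not a removable node, i.e.\ $m\notin DRN(\alpha,E)$, contradicting $m=\min DRN(\alpha,E)$. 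Thus $c\le m$, so $c=m$, and the cell of $1$ --- being a removable node holding its column minimum --- is precisely the (unique) distinguished removable node in column $m$.

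The step I expect to be the crux is the choice of $u$ as the global minimum over \emph{all} of columns $1$ through $m$ rather than merely the minimum of column $m$: this is what guarantees that $u-1$ lies strictly to the right of $u$ and thereby activates the source condition, which is the only hypothesis capable of producing a cell that obstructs a removable node. Working with column $m$ alone would not control where $u-1$ goes --- it could lie to the left, in which case $u-1$ would be a descent and the argument would stall --- so the minimality over the entire initial segment of columns is essential.
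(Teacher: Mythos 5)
Your proof is correct, and it takes a genuinely different route from the paper's. The paper argues through Lemma~\ref{lem:source}: it shows that any $\rtau\in E$ whose distinguished removable node in column $m$ holds an entry $i\geq 2$ admits a preimage under some $\pi_j$, by forming the set $X=\{p \suchthat 1\leq p\leq i-1 \text{ and } p \text{ lies in a column } >m\}$, splitting into cases according to whether $i-1\in X$, and invoking Lemma~\ref{lem:switchSRCT} to verify that interchanging the relevant consecutive pair yields a bona fide SRCT $\rtau_1$ with $\pi_j(\rtau_1)=\rtau$. You instead work straight from Definition~\ref{def:source}: your extremal element is the minimum $u$ over columns $1,\ldots,m$ (analogous in spirit to the paper's $x_{max}$, both being chosen so that a consecutive pair $\{u-1,u\}$, respectively $\{x_{max},x_{max}+1\}$, straddles column $m$), and the source condition itself---rather than the nonexistence of a preimage---delivers the contradiction by forcing $u-1$ into the cell immediately right of $u$ and thereby destroying removability of the distinguished node in column $m$. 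Your version buys brevity and self-containedness: no case analysis, no construction and verification of a second tableau, and no appeal to Lemma~\ref{lem:switchSRCT}; your two boundary checks ($c\geq m$ via the fact that $1$ always sits in a removable node, and $u\geq 2$ via the standing assumption $m<c$) are both sound. What the paper's version buys is that it runs exactly parallel to the proof of Proposition~\ref{prop:position_of_1_sink} and explicitly exhibits a flip leading into $\rtau$, which makes the ``source = no incoming flip'' picture concrete. Either proof is complete.
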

\begin{proof}
Note first that any column that $1$ belongs to in any tableau in $E$ will always be an element of $DRN(\alpha,E)$. Let the entry in the distinguished removable node in column $m$ of $\rtau$ be $i\geq 2$. We will exhibit the existence of a positive integer $j$ satisfying $1\leq j\leq i-1$, and an SRCT $\rtau_1 \neq \rtau$ such that $\pi_j(\rtau_1)=\rtau$.

Let $1$ belong to column $m'>m$ in $\rtau$. Let
\begin{eqnarray*}
X=\{p\suchthat 1\leq p\leq i-1 \text{ and } p \text{ belongs to a column } r>m \}.
\end{eqnarray*}
Since $i\geq 2$ and hence $1\in X$, the set $X$ is clearly non-empty. We will consider two cases.

Suppose first that $i-1 \in X$, and assume moreover that it belongs to column $m+1$. Since $i$ is the entry of a distinguished removable node and thus at the end of a row, we get that $i-1$ can not belong to the same row as $i$. Furthermore, the triple rule implies that $i-1$ belongs to a row strictly north of the row containing $i$. We can deduce from the second part of Lemma~\ref{lem:switchSRCT} that the filling $\rtau_1$ obtained by interchanging the positions of $i$ and $i-1$ in $\rtau$ is still an SRCT. Then, clearly we have that $\pi_{i-1}(\rtau_1)=\rtau$.

Suppose now that $i-1 \in X$, and that it belongs to column $j$ where $j \geq m+2$. Then it is immediate from Lemma~\ref{lem:switchSRCT} that the filling $\rtau_1$ obtained by interchanging the positions of $i$ and $i-1$ in $\rtau$ is still an SRCT. As before, we have that $\pi_{i-1}(\rtau_1)=\rtau$.

If $i-1$ does not belong to $X$, then let $x_{max}$ denote the maximal element of $X$. Since $x_{max}+1 \leq i-1$ and $x_{max}+1 \notin X$, we get that $x_{max}+1$ belongs to column $q$ where $q\leq m$. However, since $i$ is the entry of a distinguished removable node in column $m$, it is the smallest entry in its column and hence the stronger inequality $q<m$ holds. Thus, $x_{max}$ and $x_{max}+1$ belong to columns whose indices differ by at least 2. Hence by the second part of Lemma~\ref{lem:switchSRCT} the filling $\rtau_1$ obtained by interchanging the positions of $x_{max}$ and $x_{max}+1$ is still an SRCT. Again, we have that $\pi_{x_{max}}(\rtau_1)=\rtau$.
%If $i-1$ does not belong to $X$, let $x_{max}$ denote the maximal element of $X$. We know that $x_{max}+1$ belongs to column $q$ where $q<m$, since $i$ is the entry of a distinguished removable node so $q\neq m$. Hence by the second part of Lemma~\ref{lem:switchSRCT} the filling $\rtau_1$ obtained by interchanging the positions of $x_{max}$ and $x_{max}+1$ is still an SRCT. Again, we have that $\pi_{x_{max}}(\rtau_1)=\rtau$.
\end{proof}

Now, let $\rtau$ be a sink tableau. Then it is easy to see that $\rtau_{-1}$ defined above is also a sink tableau. As with source tableaux, every equivalence class under the equivalence relation contains at least one sink tableau, by using Lemma \ref{lem:sourcesinkexistence}. Likewise, the purpose of the next proposition is to help us prove that each equivalence class contains exactly one sink tableau.

\begin{proposition}\label{prop:position_of_1_sink}
Let $\rtau$ be an SRCT of shape $\alpha\vDash n$ belonging to an equivalence class $E$. Let $\rtau_{sink}$ be a sink tableau in $E$. Let $M$ be the largest element in $DRN(\alpha,E)$. Then the distinguished removable node in column $M$ of $\rtau_{sink}$ contains the entry $1$.
\end{proposition}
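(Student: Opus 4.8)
The plan is to argue by contradiction, mirroring the proof of Proposition~\ref{prop:position_of_1_source} but, since we are now dealing with a \emph{sink}, exhibiting a non-attacking descent rather than a reverse flip. Recall that the cell containing $1$ is always a distinguished removable node, so the column of $1$ lies in $DRN(\alpha,E)$ and is therefore at most $M$. Suppose, for contradiction, that the entry of the distinguished removable node in column $M$ of $\rtau_{sink}$ is some $i\geq 2$. Since $i$ is the smallest entry in column $M$, the entry $1$ cannot lie in column $M$, and hence lies in some column strictly less than $M$. Thus the set
\[
L=\{\,p \suchthat 1\leq p\leq i-1 \text{ and } p \text{ lies in a column } <M\,\}
\]
is non-empty, as it contains $1$. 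The goal is to use the existence of such an $i$ to produce a non-attacking descent in $\rtau_{sink}$, contradicting Definition~\ref{def:sink}.

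First I would set $x=\max L$. If $x\leq i-2$, then $x+1\leq i-1$ is too small to lie in column $M$, and by maximality of $x$ it does not lie in a column $<M$; hence $x+1$ lies in a column $>M$ while $x$ lies in a column $<M$. Their columns then differ by at least $2$, so $x$ and $x+1$ are non-attacking, and since $x+1$ is strictly to the right of $x$ we have $x\in\des(\rtau_{sink})$. This non-attacking descent is the desired contradiction. The remaining case is $x=i-1$, that is, $i-1$ itself lies in a column $<M$; if that column is $\leq M-2$ the same reasoning shows $(i-1,i)$ is a non-attacking descent.

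The only delicate case, and the one I expect to be the main obstacle, is when $i-1$ lies in the adjacent column $M-1$, because then $(i-1,i)$ could a priori be attacking, and indeed the sink condition would force exactly this. Here I would use crucially that $i$ occupies a \emph{removable node}, say the cell $(a,M)$ with $\alpha_a=M$. A short argument shows that $i-1$ cannot share row $a$ (the cell immediately to the left of $i$ has entry exceeding $i$), and that the row $b$ containing $i-1$ has length exactly $M-1$ (otherwise the cell to the right of $i-1$ would lie in column $M$, forcing an entry that is both $\geq i$ and, by the row-decrease rule, $<i-1$). Now if $b>a$, then $i$ lies strictly northeast of $i-1$, so $(i-1,i)$ is a non-attacking descent and we are done; whereas if $b<a$, then row $b$ has length $M-1$ and lies strictly above row $a$, directly contradicting the condition defining a removable node for the part $\alpha_a=M$. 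Either way we reach a contradiction, so in fact $i=1$, which is exactly the assertion that the distinguished removable node in column $M$ of $\rtau_{sink}$ contains $1$.
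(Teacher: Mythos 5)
Your proof is correct and follows essentially the same route as the paper's: you form the same set of entries at most $i-1$ lying in columns strictly left of $M$, take its maximum, and split into the same cases, in each producing a non-attacking descent that contradicts the sink condition. The only (immaterial) difference is in the adjacent-column subcase, where you first deduce that the row of $i-1$ has exactly $M-1$ cells and then contradict the removable-node condition, whereas the paper invokes the removable-node condition first to force that row to have at least $M$ cells and then contradicts the minimality of $i$ in column $M$ --- the same two ingredients applied in the opposite order.
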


\begin{proof}
As noted in the previous proof, any column that $1$ belongs to in any tableau in $E$ will always be an element of $DRN(\alpha,E)$. Let the entry in the distinguished removable node in column $M$ of $\rtau$ be $i\geq 2$. We will exhibit the existence of a positive integer $j$ satisfying $1\leq j\leq i-1$, and an SRCT $\rtau_1\neq \rtau$ such that $\pi_j(\rtau)=\rtau_1$.

Let $1$ belong to column $M'<M$ in $\rtau$. Let
\begin{eqnarray*}
X=\{p\suchthat 1\leq p\leq i-1 \text{ and } p \text{ belongs to a column } r<M \}.
\end{eqnarray*}
Since $i\geq 2$ and $1\in X$, the set $X$ is clearly non-empty. We will consider two cases.

If $i-1$ belongs to $X$, then we claim that $i-1$ is a non-attacking descent in $\rtau$. Let $c_{i-1}$ be the column to which $i-1$ belongs in $\rtau$. If $M-c_{i-1}\geq 2$, then it is immediate that $i-1$ is a non-attacking descent. Thus, we are left to consider the case where $c_{i-1}=M-1$. Suppose that $i-1$ is actually an attacking descent in $\rtau$. Hence, we know that the row to which $i-1$ belongs is strictly above the row to which $i$ belongs. Since, $i$ occupies a removable node in column $M$, we know that the row to which $i-1$ belongs has $\geq M$ cells. But then the entry to the immediate right of $i-1$ in $\rtau$ is $\leq i-2$. This contradicts the fact that the smallest entry in column $M$ is in the cell that contains $i$. Hence $i-1$ belongs to a row that is strictly below the row containing $i$, and $i-1$ is a non-attacking descent in $\rtau$. Then, clearly we have that $\pi_{i-1}(\rtau)=\rtau_1$.

If $i-1$ does not belong to $X$, let $x_{max}$ denote the maximal element of $X$. Since $x_{max}+1 \leq i-1$ and $x_{max}+1 \notin X$, we get that $x_{max}+1$ belongs to column $q$ where $q\geq M$. However, since $i$ is the entry of a distinguished removable node in column $M$, it is the smallest entry in its column and hence the stronger inequality $q>M$ holds. Thus, $x_{max}$ and $x_{max}+1$ belong to columns whose indices differ by at least 2, and this implies that $x_{max}$ is a non-attacking descent in $\rtau$. Hence by the first part of Lemma \ref{lem:switchSRCT} the filling $\rtau_1$ obtained by interchanging the positions of $x_{max}$ and $x_{max}+1$ is still an SRCT. Again, we have that $\pi_{x_{max}}(\rtau)=\rtau_1$.
\end{proof}

These two propositions show that the position of $1$ in a source or sink tableau in an equivalence class $E$ is uniquely decided. Thus, we have the following important corollary.

\begin{corollary}\label{cor:uniquesourceandsink} Let $\alpha \vDash n$. Then in any equivalence class $E$ under $\equiva$, there exists a unique source tableau and a unique sink tableau. Thus, 
\begin{align*}
&\text{ the number of source tableaux}\\
= & \text{ the number of sink tableaux}\\
=& \text{ the number of equivalence classes under the equivalence relation $\equiva$.}\end{align*}
\end{corollary}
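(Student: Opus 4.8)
The plan is to prove both uniqueness statements by induction on $n$, using the two propositions to pin down the position of the entry $1$ and then passing to the smaller tableau $\rtau_{-1}$. For $n\leq 1$ there is a single SRCT of any prescribed shape, so the claim is immediate; assume the result for all sizes smaller than $n$.

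For uniqueness of the source tableau I would take two source tableaux $\rtau,\rtau'\in E$. Since they share the same standardized column word, they determine the same set $DRN(\alpha,E)$ and the same distinguished removable nodes, these depending only on $E$. Proposition~\ref{prop:position_of_1_source} then forces the entry $1$ in both $\rtau$ and $\rtau'$ into the distinguished removable node lying in column $m=\min DRN(\alpha,E)$, that is, into the identical cell. Deleting this cell and subtracting $1$ from every remaining entry produces, as recorded just before the propositions, source tableaux $\rtau_{-1}$ and $\rtau'_{-1}$, and they have the same shape $\beta\vDash n-1$ precisely because $1$ occupied the same cell in each.

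The key step is to check that $\rtau_{-1}\sim_\beta\rtau'_{-1}$, that is, that the deletion respects the equivalence relation. This holds because $1$ is the global minimum and sits in the distinguished removable node of its column, hence is the smallest entry of that column; as standardization is order-preserving, deleting this minimum and restandardizing alters $\st_j$ only for $j=m$, and there only in a manner dictated by the position of $1$ within $\st_m(\rtau)$, while the uniform downward shift of all other entries leaves the remaining $\st_j$ unchanged. Thus $\st(\rtau_{-1})$ is a function of $\st(\rtau)$ together with the cell of $1$; since these agree for $\rtau$ and $\rtau'$, we conclude $\st(\rtau_{-1})=\st(\rtau'_{-1})$. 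The inductive hypothesis applied to the class of $\rtau_{-1}$ in $\SRCT(\beta)$ gives $\rtau_{-1}=\rtau'_{-1}$, and reinserting $1$ in the forced cell yields $\rtau=\rtau'$.

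The argument for the sink tableau is identical after replacing Proposition~\ref{prop:position_of_1_source} by Proposition~\ref{prop:position_of_1_sink}, which instead forces $1$ into the distinguished removable node in column $M=\max DRN(\alpha,E)$, and after noting that $\rtau_{-1}$ is again a sink tableau. Combining these uniqueness results with the existence guaranteed by Lemma~\ref{lem:sourcesinkexistence} shows that each equivalence class contains exactly one source tableau and exactly one sink tableau; since the classes partition $\SRCT(\alpha)$, counting source tableaux, sink tableaux, and equivalence classes all return the same number. I expect the main obstacle to be precisely the compatibility of the deletion $\rtau\mapsto\rtau_{-1}$ with $\equiva$ in the key step, that $\st(\rtau_{-1})$ is determined by $\st(\rtau)$ and the position of $1$; the rest is bookkeeping around the two propositions.
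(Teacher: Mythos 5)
Your proof is correct and follows essentially the same route as the paper, which simply observes that Propositions~\ref{prop:position_of_1_source} and~\ref{prop:position_of_1_sink} pin down the cell containing $1$ and then iterates on the remaining cells via $\rtau\mapsto\rtau_{-1}$. You make the induction explicit and verify the one detail the paper leaves implicit, namely that deleting the cell containing $1$ sends equivalent tableaux to equivalent tableaux, which is exactly the right point to check.
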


With this uniqueness in mind, given a composition $\alpha \vDash n$ and an equivalence class $E$ under $\equiva$, we will now denote the unique source tableau in $E$ by $\rtau_{0,E}$, and denote the unique sink tableau in $E$ by $\rtau_{1,E}$. 

As a corollary to the above corollary, we obtain the following that describes the structure of the $\hn$-modules from \eqref{eq:Ssum}.

\begin{corollary}\label{cor:equivcyclic}
Let $\alpha \vDash n$ and $E$ be an equivalence class under the equivalence relation $\equiva$. Then $\smodule_{\alpha,E}$ is a cyclic $\hn$-module with generator $\rtau_{0,E}$. In particular, every tableau in $E$ can be obtained by applying a sequence of flips to $\rtau_{0,E}$.
\end{corollary}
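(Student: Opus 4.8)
The plan is to reduce the statement to the combinatorial claim that every tableau in $E$ can be reached from $\rtau_{0,E}$ by a sequence of flips, and then read off cyclicity. Since $\smodule_{\alpha,E}$ is an $\hn$-submodule by Lemma~\ref{lem:Emodule} and the SRCTs in $E$ form a basis for it, it suffices to show that each $\rtau \in E$ lies in $\hn\cdot\rtau_{0,E}$; this will follow once we produce operators $\pi_{i_0}, \ldots, \pi_{i_{N-1}}$ with $\pi_{i_0}\cdots\pi_{i_{N-1}}(\rtau_{0,E}) = \rtau$, since each $\pi_i$ acts as the generator $T_i$ of $\hn$.

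To produce such a sequence I would argue backwards from an arbitrary $\rtau \in E$. First I would set $\rtau^{(0)} = \rtau$ and build a chain as follows: whenever $\rtau^{(j)}$ is not a source tableau, Lemma~\ref{lem:source} guarantees the existence of an SRCT $\rtau^{(j+1)} \neq \rtau^{(j)}$ and an index $i_j$ with $\pi_{i_j}(\rtau^{(j+1)}) = \rtau^{(j)}$. Two facts make this chain well-behaved: by Lemma~\ref{lem:Emodule} flips preserve the equivalence class, so every $\rtau^{(j)}$ again lies in $E$; and by Lemma~\ref{lem:precimpliesbruhat} each backward step strictly decreases the column-word length, that is, $l(\col_{\rtau^{(j+1)}}) = l(\col_{\rtau^{(j)}}) - 1$.

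Because the column-word length is a nonnegative integer, this strictly decreasing chain must terminate, necessarily at a tableau $\rtau^{(N)}$ admitting no further backward step. By Lemma~\ref{lem:source} this $\rtau^{(N)}$ is a source tableau lying in $E$, and Corollary~\ref{cor:uniquesourceandsink} forces $\rtau^{(N)} = \rtau_{0,E}$. Reading the chain forwards then yields $\pi_{i_0}\pi_{i_1}\cdots\pi_{i_{N-1}}(\rtau_{0,E}) = \rtau$, exactly the desired sequence of flips. Hence every tableau of $E$, and therefore a spanning set of $\smodule_{\alpha,E}$, is obtained from $\rtau_{0,E}$ by applying elements of $\hn$, so $\smodule_{\alpha,E} = \hn\cdot\rtau_{0,E}$ is cyclic with generator $\rtau_{0,E}$.

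I expect the only real content to lie in the termination-and-uniqueness argument of the previous paragraph; the main obstacle is organizing it so that the three earlier results interlock, namely using Lemma~\ref{lem:source} to justify each backward step, Lemma~\ref{lem:precimpliesbruhat} to guarantee the process halts, and Corollary~\ref{cor:uniquesourceandsink} to pin the terminal tableau down as $\rtau_{0,E}$ rather than merely as \emph{some} source tableau. Everything else, including the passage from reachability to cyclicity, is bookkeeping.
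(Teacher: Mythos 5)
Your argument is correct and matches the paper's intended reasoning: the paper states this corollary without proof as an immediate consequence of Corollary~\ref{cor:uniquesourceandsink}, and the backward-chain/termination argument you spell out (using Lemma~\ref{lem:source} for predecessors, Lemma~\ref{lem:precimpliesbruhat} for termination, and uniqueness of the source tableau to identify the endpoint) is precisely the mechanism already used in the proof of Lemma~\ref{lem:sourcesinkexistence}. Nothing is missing; your version simply makes explicit what the paper treats as immediate.
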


Thus, we also have the following for $\rtau_1,\rtau_2\in \SRCT(\alpha)$.
\begin{eqnarray}\label{eq:sameorbit}
\rtau_1\equiva\rtau_2 \Longleftrightarrow \rtau_1\text{ and } \rtau_2\text{ lie in the orbit of the same source tableau}.
\end{eqnarray}

Next, we will give an example of the source tableau and the sink tableau for the equivalence class of the tableau $\rtau$ considered in Example \ref{ex:heckeaction}.

\begin{example}\label{ex:DRN}
The distinguished removable nodes for the equivalence class, $E$, of $\rtau$ are marked in red in the diagram below.
\begin{eqnarray*}\tableau{
5&4&\tcr{2}\\9&7&6&3\\10&\tcr{1}\\12&11&8
}
\end{eqnarray*}
We know that the source tableau in $E$ will have $1$ in the distinguished removable node in column $2$ by Proposition~\ref{prop:position_of_1_source} while the sink tableau in $E$ will have a $1$ in the distinguished removable node in column $3$ by Proposition~\ref{prop:position_of_1_sink}. This gives that the source tableau and the sink tableau have $1$ in the following positions below. Repeating this analysis on the remaining unfilled cells, we get the following unique source tableau and unique sink tableau for the equivalence class $E$.
\begin{eqnarray*}
\rtau_{0,E}=\tableau{
4&3&2\\8&7&6&5\\9&1\\12&11&10
}
\hspace{10mm}
\rtau_{1,E}=\tableau{
6&4&1\\9&7&5&2\\10&3\\12&11&8
}
\end{eqnarray*}
\end{example}

Now that we know that two SRCTs $\rtau_1$ and $\rtau_2$ satisfying $\st(\rtau_1)\neq \st(\rtau_2)$ are incomparable under the partial order $\po$, we will focus our attention on the partial order $\po$ restricted to all tableaux belonging to the same equivalence class. 

\begin{theorem}\label{the:bruhatordersubinterval}
Let $\alpha \vDash n$. Let $E$ be a fixed equivalence class of $\SRCT(\alpha)$ under the equivalence relation $\equiva$.Then the poset $(E,\po)$ has the structure of a graded lattice and is isomorphic to the subinterval $[\col_{\rtau_{0,E}},\col_{\rtau_{1,E}}]$ under the weak Bruhat order $\leq_{L}$ on $\sgrp_n$.
\end{theorem}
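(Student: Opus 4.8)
The plan is to show that the map $\phi\colon E \to \sgrp_n$, $\rtau \mapsto \col_{\rtau}$, is a poset isomorphism onto the interval $[\col_{\rtau_{0,E}},\col_{\rtau_{1,E}}]$; the graded lattice structure then transfers from Theorem~\ref{the:Bjorner} together with the fact that $\leq_{L}$ is graded by length. First I would record the elementary features of $\phi$. It is injective, since the shape $\alpha$ together with $\col_{\rtau}$ recovers $\rtau$ (the column lengths of $\alpha$ tell us how to cut the word back into columns). Reading Lemma~\ref{lem:precimpliesbruhat} carefully, a single flip $\pi_k(\rtau')=\rtau''$ satisfies $\col_{\rtau''}=s_k\col_{\rtau'}$ and adds exactly one inversion, namely the pair of positions occupied by the interchanged values $k$ and $k+1$; conversely these flips are precisely the covers of $(E,\po)$ (a flip raises $l(\col_{\rtau})$ by one, and $\po$ is the transitive closure of flips), and $\phi$ sends them to covers of $\leq_{L}$. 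By Lemmas~\ref{lem:source} and \ref{lem:sink} the minimal (resp. maximal) elements of $(E,\po)$ are exactly the source (resp. sink) tableaux, and by Corollary~\ref{cor:uniquesourceandsink} each is unique; as $E$ is finite, $\rtau_{0,E}$ is the minimum and $\rtau_{1,E}$ the maximum. Hence by Corollary~\ref{cor:precimpliesweakleftbruhat}, $\phi$ is order preserving with image inside $[\col_{\rtau_{0,E}},\col_{\rtau_{1,E}}]$.

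The heart of the argument, and the step I expect to be the main obstacle, is the following cover-lifting lemma: if $\rtau_1\in E$ and $\sigma=s_k\col_{\rtau_1}$ covers $\col_{\rtau_1}$ with $\sigma\leq_{L}\col_{\rtau_{1,E}}$, then $k$ is a \emph{non-attacking} descent of $\rtau_1$, so that $\pi_k(\rtau_1)$ is a genuine flip with column word $\sigma$. The subtlety is that value $k$ preceding value $k+1$ in $\col_{\rtau_1}$ makes $k$ a descent but does not by itself forbid it from being attacking, so the constraint $\sigma\leq_{L}\col_{\rtau_{1,E}}$ must be used to exclude this. Writing $c_p,c_q$ for the cells of $\rtau_1$ holding $k$ and $k+1$ (with $p<q$ in reading order), the cover adds the inversion $(p,q)$, so $\sigma\leq_{L}\col_{\rtau_{1,E}}$ forces $(p,q)\in Inv(\col_{\rtau_{1,E}})$. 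The key observation is that the pair of cells swapped by any flip is always in \emph{non-attacking} relative position, since a flip acts only on a non-attacking descent; consequently every inversion in $Inv(\col_{\rtau_{1,E}})\setminus Inv(\col_{\rtau_1})$ — each of which is introduced by some flip along a flip-path from $\rtau_1$ up to the maximum $\rtau_{1,E}$, inversions only growing along such a path — is indexed by a non-attacking pair of cells. If $k$ were an attacking descent of $\rtau_1$, then $c_p,c_q$ would form an attacking pair of cells (either both in the first column, or in adjacent columns with $c_q$ strictly southeast of $c_p$), and this configuration of cells forces any tableau with consecutive values $j,j+1$ in $c_p,c_q$ to have $j$ an attacking descent; thus the inversion $(p,q)$ can never be added by a flip. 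Since $(p,q)\notin Inv(\col_{\rtau_1})$, this would give $(p,q)\notin Inv(\col_{\rtau_{1,E}})$, a contradiction. Hence $k$ is non-attacking.

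With the cover-lifting lemma in hand, surjectivity and order reflection follow by induction on length. For surjectivity, given $\sigma$ in the interval with $\sigma\neq\col_{\rtau_{0,E}}$, I would choose a cover $\sigma'\lessdot_{L}\sigma$ with $\col_{\rtau_{0,E}}\leq_{L}\sigma'$ (possible as $\leq_{L}$ is graded), realize $\sigma'=\col_{\rtau'}$ for some $\rtau'\in E$ by induction, and then lift the cover $\col_{\rtau'}\lessdot_{L}\sigma\leq_{L}\col_{\rtau_{1,E}}$ to a flip $\pi_k(\rtau')\in E$ with column word $\sigma$. For order reflection, if $\col_{\rtau_1}\leq_{L}\col_{\rtau_2}$ with $\rtau_1,\rtau_2\in E$, I would take a saturated chain from $\col_{\rtau_1}$ to $\col_{\rtau_2}$ (which lies in the interval, as $\col_{\rtau_2}\leq_{L}\col_{\rtau_{1,E}}$) and lift each of its covers to a flip, producing a sequence of flips from $\rtau_1$ to $\rtau_2$, whence $\rtau_1\po\rtau_2$. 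Combined with injectivity and order preservation, this shows $\phi$ is a poset isomorphism $(E,\po)\cong[\col_{\rtau_{0,E}},\col_{\rtau_{1,E}}]$; since $\phi$ is also rank preserving, $(E,\po)$ inherits gradedness, and Theorem~\ref{the:Bjorner} supplies the lattice structure, completing the proof.
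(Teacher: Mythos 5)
Your proposal is correct, and it reaches the isomorphism by a genuinely different mechanism for the hard direction. The paper also works with the map $\rtau\mapsto\col_\rtau$ and handles the easy direction exactly as you do (a flip adds one inversion, so flip chains map to saturated chains in $\leq_L$); but for lifting chains of the interval back to $E$ it leans entirely on the $0$-Hecke relations via Lemma~\ref{lem:poflips}: since $\pi_{\tilde\sigma}(\rtau_{0,E})=\rtau_{1,E}$ for $\tilde\sigma=\col_{\rtau_{1,E}}\col_{\rtau_{0,E}}^{-1}$, \emph{any} reduced word for $\tilde\sigma$ --- i.e.\ any saturated chain of the interval from bottom to top --- is realized by genuine flips, which gives surjectivity and the inverse map $\sigma\mapsto\pi_{\sigma\col_{\rtau_{0,E}}^{-1}}(\rtau_{0,E})$ in one stroke. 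You instead prove a local cover-lifting lemma by a direct inversion-set analysis: an inversion $(p,q)$ can only ever be created by a flip when the fixed cells at positions $p,q$ of the reading order form a non-attacking pair, and since $Inv(\col_{\rtau_{1,E}})\setminus Inv(\col_{\rtau_1})$ consists precisely of the inversions created along a flip path up to the unique sink, any weak-order cover staying below $\col_{\rtau_{1,E}}$ must come from a non-attacking descent. This argument is sound (the relative position of the two cells is indeed what decides attacking versus non-attacking once they carry consecutive values, and the flip path to $\rtau_{1,E}$ exists by Lemma~\ref{lem:sink} and Corollary~\ref{cor:uniquesourceandsink}), and it has the merit of being purely combinatorial and of lifting covers anywhere in the interval rather than only chains anchored at the source tableau; the paper's route is shorter because Lemma~\ref{lem:poflips} has already packaged the word-independence coming from Theorem~\ref{the:0heckerels}. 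Both proofs then quote Theorem~\ref{the:Bjorner} for the graded lattice structure.
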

\begin{proof}
Consider the map $f:E \to \sgrp_n$ given by mapping an SRCT $\rtau$ to $\col_{\rtau}$. This map is clearly injective. Consider a maximal chain $$\rtau_{0,E} \po \pi_{i_1}(\rtau_{0,E})\po \cdots \po \pi_{i_r}\cdots \pi_{i_1}(\rtau_{0,E})=\rtau_{1,E}.$$
We claim that $f$ induces a saturated chain in $(\mathfrak{S}_n,\leq_{L})$ shown below
$$\col_{\rtau_{0,E}}=f(\rtau_{0,E} )\leq_{L} f(\pi_{i_1}(\rtau_{0,E}))\leq_{L} \cdots \leq_{L} f(\pi_{i_r}\cdots \pi_{i_1}(\rtau_{0,E}))=\col_{\rtau_{1,E}}.$$
This follows from Lemma \ref{lem:precimpliesbruhat}.

In a similar vein, suppose one has a saturated chain in $\sgrp _n$ as shown below
$$\col_{\rtau_{0,E}}\leq_{L} s_{j_1}\col_{\rtau_{0,E}}\leq _{L}\cdots \leq_{L} s_{j_r}\cdots s_{j_1}(\col_{\rtau_{0,E}})=\col_{\rtau_{1,E}}.$$
We have that $s_{j_r}\cdots s_{j_1}$ is a reduced word for $\col_{\rtau_{1,E}}\col_{\rtau_{0,E}}^{-1}$. Hence $\pi_{j_r}\cdots \pi_{j_1}(\rtau_{0,E})=\rtau_{1,E}$ by Lemma~\ref{lem:poflips}. Thus, we get a chain in $E$ as shown below
$$\rtau_{0,E} \po \pi_{j_1}(\rtau_{0,E})\po \cdots \po \pi_{j_r}\cdots \pi_{j_1}(\rtau_{0,E})=\rtau_{1,E}.$$
This chain is a maximal chain as $s_{j_r}\cdots s_{j_1}$ is a reduced word expression for $\col_{\rtau_{1,E}}\col_{\rtau_{0,E}}^{-1}$.
Thus, we see that the map taking an element $\sigma\in \mathfrak{S}_n$ satisfying $\col_{\rtau_{0,E}}\leq_{L}\sigma\leq_{L}\col_{\rtau_{1,E}}$ to $\pi_{\sigma\col_{\rtau_{0,E}}^{-1}}(\rtau_{0,E})$ is actually the inverse for $f$. This establishes that $f$ is an isomorphism from $E$ to its image in $\mathfrak{S}_n$ equal to the interval $[\col_{\rtau_{0,E}},\col_{\rtau_{1,E}}]$. Since it maps maximal chains to maximal chains, we get the fact that $f$ is a poset isomorphism.
\end{proof}

\section{The classification of tableau-cyclic and indecomposable modules}\label{sec:cyclic_and_indecomposable_modules}
Elaborating on the $0$-Hecke modules discovered in Section~\ref{sec:0Hecke}, in this section we will classify those $\hn$-modules $\smodule _\alpha$ for $\alpha \vDash n$ that are cyclically generated by a single SRCT, termed \emph{tableau-cyclic}, and use these to classify those that are indecomposable.

The equivalence relation from Section~\ref{sec:sourcesink} will be key to proving our classification, and the following compositions will be key to describing our classification:
We call a composition $\alpha = (\alpha _1, \ldots , \alpha _{\ell(\alpha)})$ \emph{simple} if and only if it satisfies the following condition.
\begin{itemize}
\item If $\alpha _i \geq \alpha _j \geq 2$ and $1\leq i < j \leq \ell(\alpha)$ then there exists an integer $k$ satisfying $i<k<j$ such that $\alpha _k = \alpha _j -1$.
\end{itemize}
Otherwise we call a composition \emph{complex}.

\begin{example}\label{ex:simple} The compositions $(2,5,6)$and $(4,1,2,3,4)$ are simple, whereas $(2,2)$, $(3,1,3)$ and $(5,1,2,4)$ are complex.
\end{example}

The removable nodes of a simple composition are particularly easy to describe.

\begin{lemma}\label{lem:11.1} Let $\alpha = (\alpha _1, \ldots , \alpha _{\ell(\alpha)})$ be a simple composition. Then the part $\alpha _j$ for $1\leq j \leq \ell(\alpha)$ has a removable node if and only if
\begin{enumerate}
\item $j=1$ or
\item for all $1\leq i < j$ we have $\alpha _i \leq \alpha _j -2$.
\end{enumerate}
\end{lemma}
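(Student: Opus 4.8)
The plan is to connect the abstract definition of ``removable node'' from Subsection~\ref{subsec:compositions} with the combinatorial structure forced by simplicity. Recall that a part $\alpha _j$ (with $j \geq 2$) has a removable node precisely when $\alpha _j \geq 2$ and there is no $1 \leq i < j$ with $\alpha _i = \alpha _j - 1$. So the task reduces to showing that, under the simplicity hypothesis, the condition ``$\alpha _j \geq 2$ and no earlier part equals $\alpha _j - 1$'' is equivalent to ``all earlier parts satisfy $\alpha _i \leq \alpha _j - 2$.'' The $j=1$ case is handled separately since the top row always has a removable node by definition.

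First I would prove the easy direction, namely that condition (2) implies $\alpha _j$ has a removable node. If every $\alpha _i \leq \alpha _j - 2$ for $1 \leq i < j$, then in particular $\alpha _j \geq 2$ (taking $i=1$, assuming $j \geq 2$, forces $\alpha _j \geq \alpha _1 + 2 \geq 3$, but more simply the inequality itself requires $\alpha _j \geq 2$), and no earlier part can equal $\alpha _j - 1$ since they are all strictly smaller than $\alpha _j - 1$. Hence the definition of a removable node is satisfied. This direction does not even use simplicity.

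The reverse direction is where simplicity does the work, and this will be the main obstacle. Suppose $\alpha _j$ has a removable node with $j \geq 2$, so $\alpha _j \geq 2$ and no earlier part equals $\alpha _j - 1$. I want to show every earlier part is $\leq \alpha _j - 2$. Suppose for contradiction that some $\alpha _i$ with $i < j$ satisfies $\alpha _i \geq \alpha _j - 1$; since no earlier part equals $\alpha _j - 1$, we in fact have $\alpha _i \geq \alpha _j$. Now $\alpha _i \geq \alpha _j \geq 2$ with $i < j$, so the simplicity condition applies: there exists $k$ with $i < k < j$ such that $\alpha _k = \alpha _j - 1$. But $k < j$ exhibits an earlier part equal to $\alpha _j - 1$, contradicting the removable-node hypothesis. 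Hence no such $\alpha _i$ exists and condition (2) holds.

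The only subtlety to watch is applying the simplicity condition in the correct orientation: it is phrased for a pair $\alpha _i \geq \alpha _j \geq 2$ with $i < j$, which is exactly the configuration produced by the contradiction hypothesis $\alpha _i \geq \alpha _j$, so the roles of the indices match up directly and no re-indexing is needed. I expect the whole argument to be short; the care required is purely in tracking the strict-versus-weak inequalities (translating ``no part equals $\alpha _j - 1$'' together with ``$\alpha _i \geq \alpha _j - 1$'' into ``$\alpha _i \geq \alpha _j$'') so that the simplicity hypothesis is invoked cleanly.
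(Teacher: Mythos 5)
Your proof is correct and follows essentially the same route as the paper: the forward direction is immediate from the definitions, and for the converse you use the absence of an earlier part equal to $\alpha_j-1$ to upgrade $\alpha_i\geq\alpha_j-1$ to $\alpha_i\geq\alpha_j$, then invoke simplicity to produce a contradiction. The paper phrases this as first disposing of earlier parts with $\alpha_i<\alpha_j$ and then ruling out those with $\alpha_i\geq\alpha_j$, but the argument is the same.
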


\begin{proof} The result is clear for $j=1$. Hence assume that $j\geq 2$. Then if $\alpha _j$ has a removable node, by definition $\alpha _j \geq 2$. Notice further that if $\alpha _j$ has a removable node, then all parts $\alpha _i$ such that $1\leq i <j$ satisfying $\alpha _i <\alpha _j$ actually satisfy $\alpha _i \leq \alpha _j -2$. Thus we need only establish that there does not exist a part $\alpha _i$ satisfying $1\leq i <j $ and $\alpha _i \geq \alpha _j$.

Suppose on the contrary, that there exists such an $\alpha _i$. Then since $\alpha $ is simple, we are guaranteed the existence of a part $\alpha _k$ such that $i<k<j$ with $\alpha _k=\alpha _j -1$. However, this implies that $\alpha _j$ cannot have a removable node unless $j=1$, which contradicts our assumption on $j$. The reverse direction follows from the definitions and we are done.
\end{proof}

We now prove an elementary yet vital property of simple compositions, and then work towards classifying tableau-cyclic modules.

\begin{lemma}\label{lem:11.2} Suppose $\alpha = (\alpha _1, \ldots , \alpha _{\ell(\alpha)})$ is a simple composition and $\alpha _j$ is a part that has a removable node. Then the composition $(\alpha _1, \ldots ,\alpha _j - 1, \ldots , \alpha _{\ell(\alpha)})$ is also a simple composition.
\end{lemma}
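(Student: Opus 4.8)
The plan is to check the defining condition of a simple composition directly on $\beta := (\alpha_1, \ldots, \alpha_j - 1, \ldots, \alpha_{\ell(\alpha)})$, splitting into cases according to where the decremented part lies. Throughout I would write $a = \alpha_j$, so that $\beta_j = a - 1$ and $\beta_i = \alpha_i$ for all $i \neq j$, and I would assume $a \geq 2$; the degenerate possibility $a = 1$ forces $j = 1$ by the removable-node hypothesis, and deleting a first part equal to $1$ visibly preserves simpleness since every witness for a surviving pair already lies strictly right of the first part. The one structural fact I would lean on throughout is Lemma~\ref{lem:11.1}: because $\alpha_j$ has a removable node, either $j = 1$ or $\alpha_i \leq a - 2$ for every $i < j$.

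Fixing a pair $(p,q)$ with $p < q$ and $\beta_p \geq \beta_q \geq 2$, the goal is to exhibit $k$ with $p < k < q$ and $\beta_k = \beta_q - 1$. First I would clear the two cases touching position $j$. If $q = j$, then $p < j$ and $\beta_p = \alpha_p \geq \beta_j = a - 1$, which contradicts $\alpha_p \leq a - 2$ from Lemma~\ref{lem:11.1}, so this pair never arises. If $p = j$, then $a - 1 = \beta_j \geq \alpha_q \geq 2$, so $\alpha_j = a > \alpha_q \geq 2$, and applying simpleness of $\alpha$ to the pair $(j,q)$ produces $k$ with $j < k < q$ and $\alpha_k = \alpha_q - 1$; since $k > j$ we get $\beta_k = \alpha_k = \alpha_q - 1 = \beta_q - 1$.

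The substantive case is $p \neq j \neq q$, and this is where I expect the real obstacle: decrementing $\alpha_j$ could in principle destroy the simpleness witness of $\alpha$ precisely when that witness sits at position $j$. Here $\alpha_p \geq \alpha_q \geq 2$, so simpleness of $\alpha$ supplies $k$ with $p < k < q$ and $\alpha_k = \alpha_q - 1$, and everything reduces to showing $k \neq j$. I would argue this by contradiction: if $k = j$ then $\alpha_j = \alpha_q - 1$ with $p < j$, so Lemma~\ref{lem:11.1} yields $\alpha_p \leq \alpha_j - 2 = \alpha_q - 3$, contradicting $\alpha_p \geq \alpha_q$. Hence $k \neq j$, so $\beta_k = \alpha_k = \alpha_q - 1 = \beta_q - 1$ is a valid witness for $\beta$, and all cases are covered.

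The crux, then, is the observation behind the third case: the removable-node hypothesis, as repackaged in Lemma~\ref{lem:11.1}, guarantees that $\alpha_j$ strictly exceeds every earlier part, so $\alpha_j$ can never be the ``$\alpha_q - 1$'' witness for a pair $(p,q)$ straddling it. This is exactly what ensures that lowering $\alpha_j$ by one never removes a needed witness, and it is the only place where the hypothesis that $\alpha_j$ has a removable node is used.
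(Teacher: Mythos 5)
Your proof is correct and follows essentially the same route as the paper's: both split into cases according to whether the pair $(p,q)$ precedes, contains, or straddles position $j$, and both use Lemma~\ref{lem:11.1} to show that pairs ending at $j$ cannot arise and that the witness for a straddling pair can never sit at position $j$. The only cosmetic difference is that you treat $p=j$ as its own case while the paper folds it into the range $j\leq i<k$, and you spell out the degenerate $\alpha_j=1$ situation that the paper dismisses with ``clear for $j=1$.''
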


\begin{proof}
The result is clear for $j=1$. Hence assume that $j\geq 2$. Let $\beta= (\beta_1,\ldots,\beta_{\ell(\beta)})$ denote the composition $(\alpha _1, \ldots ,\alpha _j - 1, \ldots , \alpha _{\ell(\alpha)})$. Thus, we clearly have $\ell(\beta)=\ell(\alpha)$ and $\beta_k=\alpha_k$ for all $1\leq k\leq \ell(\alpha)$ except for $k=j$.

From Lemma \ref{lem:11.1}, we have $\alpha_i \leq \alpha_j-2$ for all $1\leq i<j$. This implies that $\beta_i \leq \beta_j-1$ for all $1\leq i<j$. 
Now notice that, since $\alpha$ is a simple composition, it follows that if $\beta_i \geq \beta_k$ where $1\leq i<k<j$, then there exists a positive integer $m$ such that $i<m<k$ and $\beta_m=\beta_k-1$. Similarly, it also follows that if $\beta_i \geq \beta_k$ where $j\leq i<k\leq \ell(\beta)$, then there exists a positive integer $m$ such that $i<m<k$ and $\beta_m=\beta_k-1$.

To establish that $\beta$ is simple, we only need to show that if $\beta_i \geq \beta_k$ where $1\leq i<j<k\leq \ell(\beta)$, then there exists a positive integer $m$ such that $i<m<k$ and $\beta_m=\beta_k-1$. Since $\beta_i=\alpha_i$ and $\beta_k=\alpha_k$, and $\alpha$ is a simple composition, we know that there exists a positive integer $m$ such that $i<m<k$ and $\alpha_m=\alpha_k-1$. Furthermore, as $\alpha_i \leq \alpha_j-2$, we are guaranteed that $m$ does not equal $j$. This together with the fact that $\alpha_m=\beta_m$ finishes the proof that $\beta$ is a simple composition.
\end{proof}

\begin{lemma}\label{lem:11.3} Let $\alpha \vDash n$ be a simple composition. If $\rtau \in \SRCT (\alpha)$ then in every column of $\rtau$ the entries increase from top to bottom.
\end{lemma}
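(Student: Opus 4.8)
I would prove this by induction on $n$, peeling off the cell containing the entry $1$ via the operation $\rtau\mapsto\rtau_{-1}$ (remove the cell holding $1$ and subtract $1$ from every remaining entry). The crucial structural input, and the only place where simplicity is really used, is the claim that in a simple composition a removable node always sits at the \emph{top} of its column. Indeed, suppose the removable node lies in row $r$, so it is the cell $(r,\alpha_r)$. If $r=1$ this is immediate, and if $r\geq 2$ then Lemma~\ref{lem:11.1} gives $\alpha_i\leq\alpha_r-2<\alpha_r$ for every $i<r$; hence no row above row $r$ reaches column $\alpha_r$, and $(r,\alpha_r)$ is the topmost cell of that column. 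Recalling the earlier observation that in any SRCT the entry $1$ occupies a removable node, we conclude that in $\rtau$ the entry $1$ sits at the top of its column.

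\textbf{Inductive step.} The base case $n\leq 1$ is trivial. For $n\geq 2$, let the entry $1$ occupy the removable node $(r,\alpha_r)$ and form $\rtau_{-1}\in\SRCT(\beta)$ as above. Deleting a corner cell and relabelling in an order-preserving way preserves the three defining conditions of an SRCT; the only point worth noting is that any triple-rule instance demanding the deleted cell as its output is vacuous, since that cell held the global minimum $1$. When $\alpha_r\geq 2$ the shape $\beta=(\alpha_1,\dots,\alpha_r-1,\dots,\alpha_{\ell(\alpha)})$ is simple by Lemma~\ref{lem:11.2}. The remaining possibility is $r=1$ with $\alpha_1=1$, where $\beta=(\alpha_2,\dots,\alpha_{\ell(\alpha)})$; deleting a leading part equal to $1$ cannot destroy simplicity, because the first part can never serve as an intermediate witness $\alpha_k=\alpha_j-1$ (this would require an index $i$ with $i<k=1$). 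In either case $\beta$ is simple, so the inductive hypothesis applies and every column of $\rtau_{-1}$ increases from top to bottom.

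\textbf{Conclusion.} Passing from $\rtau_{-1}$ back to $\rtau$ only reinstates the cell $(r,\alpha_r)$, shifts every remaining entry up by $1$, and (when $\alpha_1=1$, $r=1$) re-inserts the top row; all of these respect the top-to-bottom order within each column. Hence every column other than column $\alpha_r$ inherits its increase directly from $\rtau_{-1}$. For column $\alpha_r$ itself, the reinstated entry $1$ is, by the first paragraph, its topmost cell and is smaller than everything below it, while those lower entries increase among themselves by the inductive hypothesis; thus column $\alpha_r$ increases as well. This completes the induction. The genuinely non-routine ingredient is the opening claim that the removable node is topmost in its column, which is exactly where simplicity enters through Lemma~\ref{lem:11.1}; everything else is the bookkeeping of the corner-removal induction, the only delicate edge case being a leading part of size $1$.
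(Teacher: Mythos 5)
Your proof is correct and is essentially the paper's own argument: induction on $n$ via $\rtau\mapsto\rtau_{-1}$, with Lemma~\ref{lem:11.1} forcing the cell containing $1$ to be topmost in its column and Lemma~\ref{lem:11.2} keeping the reduced shape simple. The extra care you take in checking that $\rtau_{-1}$ is an SRCT and in handling the $\alpha_1=1$ edge case is detail the paper simply asserts.
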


\begin{proof} We will proceed by induction on the size of $\alpha$. The base case $\alpha = (1)$ is clear. Assume the claim holds for all simple compositions of size $n-1$.

In $\rtau$, let $1$ belong to a cell in column $k$. By observing that any cell containing $1$ must be a removable node and Lemma~\ref{lem:11.1} we know that there are no cells strictly north of the cell containing $1$. Hence $1$ lies strictly north of every entry in column $k$.

Now let $\rtau _{-1}$ be the SRCT obtained by removing the cell containing $1$, and subtracting $1$ from the remaining entries. Observe that by Lemma~\ref{lem:11.2} $\rtau _{-1} \in \SRCT(\alpha ')$ where $\alpha '$ is a simple composition of size $n-1$. Hence the induction hypothesis implies that the entries in every column of $\rtau _{-1}$ increase from top to bottom. Since $1$ lies strictly north of every entry in column $k$ in $\rtau$, we can conclude that the entries in every column of $\rtau$ increase from top to bottom.
\end{proof}

\begin{lemma}\label{lem:11.4}Let $\alpha \vDash n$ be a complex composition. Then there exists a $\rtau \in \SRCT (\alpha)$ such that in at least one column of $\rtau$ the entries do not increase from top to bottom.
\end{lemma}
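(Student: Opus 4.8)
The plan is to prove this existence statement by strong induction on $n=|\alpha|$, constructing the desired tableau via a \emph{lifting} operation that inverts the deletion of the cell containing $1$ used in the proof of Lemma~\ref{lem:11.3}. The base is vacuous, since no complex composition has size less than $4$.

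First I would isolate a lifting lemma: if $(q,\alpha_q)$ is a removable node of $\alpha$ and $\alpha^-$ is obtained by deleting that node (so $\alpha_q \to \alpha_q-1$, or the part is dropped when $\alpha_q=1$, which by the removable-node definition forces $q=1$), then for any $T\in\SRCT(\alpha^-)$ the filling $\rtau$ obtained by adding $1$ to every entry of $T$ and placing $1$ in cell $(q,\alpha_q)$ lies in $\SRCT(\alpha)$. The row and first-column conditions are immediate, and every instance of the triple rule involving the new cell reduces to one of three checks: as the lower-right entry $\rtau(b,k+1)$ it is harmless \emph{exactly because} $(q,\alpha_q)$ is removable, so no row above row $q$ has precisely $\alpha_q-1$ cells; as an upper entry it is automatic since $1$ is the global minimum; and the instance with $a=q,\ k=\alpha_q-1$ holds because the corresponding triple already constrained $T$. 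Moreover $\rtau$ fails to increase down column $\alpha_q$ as soon as some row $p<q$ has $\alpha_p\geq\alpha_q$ (then $\rtau(p,\alpha_q)\geq 2>1=\rtau(q,\alpha_q)$), and any non-increasing column already present in $T$ survives, since the only added cell sits at the end of row $q$ and carries the minimal entry.

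With this in hand the induction splits into two cases. If $\alpha$ has a removable node $(q,\alpha_q)$ with $\alpha_q\geq 2$ admitting a row $p<q$ with $\alpha_p\geq\alpha_q$, I would simply lift the canonical tableau $\rtau_{\alpha^-}$ and read off the non-increasing column $\alpha_q$; no recursion is needed. Otherwise, I would choose, among all pairs $i<j$ witnessing complexity (so $\alpha_i\geq\alpha_j\geq 2$ with no part equal to $\alpha_j-1$ strictly between), one with $j-i$ minimal. A short argument shows every part strictly between $i$ and $j$ is at most $\alpha_j-2$: such a part is $\neq\alpha_j-1$ by the witness condition and cannot be $\geq\alpha_j$, else some $(r,j)$ would be a witness of smaller gap. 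Consequently, were $i=1$, the node $(j,\alpha_j)$ would be removable (no part $\alpha_j-1$ occurs above row $j$) with the longer-or-equal row $1$ above it, returning us to the first case; hence here $i\geq 2$.

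Since row $1$ always carries a removable node $(1,\alpha_1)$ and $i\geq 2$, deleting it leaves rows $i,\dots,j$ untouched, so the witness $(i,j)$ (reindexed if $\alpha_1=1$) persists and $\alpha^-$ is again complex and strictly smaller. By the induction hypothesis $\alpha^-$ admits an SRCT with a non-increasing column, and lifting it through $(1,\alpha_1)$ yields the required tableau for $\alpha$. I expect the main obstacle to be precisely the bookkeeping of this second case: selecting the minimal-gap witness and verifying that reducing the guaranteed row-$1$ node both preserves complexity and lets the lifting lemma carry the bad column upward. The triple-rule verification in the lifting lemma, while routine, must be handled with care, as it is the single place where the removable-node hypothesis is genuinely used.
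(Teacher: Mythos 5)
Your proof is correct, but it takes a genuinely different route from the paper's. The paper gives a direct, non-inductive construction: it splits $\alpha$ at the witness row $i$ into $\alpha_{upper}=(\alpha_1,\ldots,\alpha_{i-1})$ and $\alpha_{lower}=(\alpha_i,\ldots,\alpha_{\ell(\alpha)})$, observes that the witness condition makes the $(j-i+1)$-th part of $\alpha_{lower}$ carry a removable node, chooses an SRCT of $\alpha_{lower}$ with $1$ in that node, and stacks it (entries shifted up by $|\alpha_{upper}|$) beneath an arbitrary SRCT of $\alpha_{upper}$; since all upper entries are smaller than all lower entries the triple rule is immediate across the blocks, and column $\alpha_j$ fails to increase between rows $i$ and $j$. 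Your argument replaces this block construction with an induction on $n$ driven by a lifting lemma, together with a minimal-gap choice of witness and the dichotomy on whether some removable node of length at least $2$ sits below a weakly longer row. The trade-off is instructive: the paper's proof is shorter, but it silently assumes that an SRCT with $1$ in a prescribed removable node exists, whereas your lifting lemma is precisely a proof of that assertion, and you correctly isolate the two places where care is needed --- the instance of the triple rule with the new cell in position $(j,k+1)$, which is exactly where removability is used, and the instance with the new cell in position $(i,k+1)$, which is rescued by the triple rule already imposed on the smaller tableau (row $q$ of $T$ having only $\alpha_q-1$ cells forces $T(q,\alpha_q-1)<T(j',\alpha_q)$ for every lower row $j'$ reaching column $\alpha_q$). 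Both proofs ultimately rest on the same combinatorial mechanism: the complexity witness produces a removable node lying strictly below a row of weakly greater length, and planting $1$ there breaks the column. Your version is longer but more self-contained; if you wanted to compress it toward the paper's, note that your Case 1 analysis applied to the truncated composition $(\alpha_i,\ldots,\alpha_{\ell(\alpha)})$ is essentially the paper's whole proof, so the recursion through row $1$ can be short-circuited by the block-stacking trick.
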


\begin{proof} Since $\alpha = (\alpha _1, \ldots , \alpha _{\ell(\alpha)})$ is a complex composition, we know that there exist $1\leq i < j < \ell(\alpha)$ such that $\alpha _i \geq \alpha _j \geq 2$ and there is no $k$ satisfying $i<k<j$ such that $\alpha _k = \alpha _j -1$. Consider the following compositions $\alpha _{upper} = (\alpha _1, \ldots , \alpha _{i-1})$ and $\alpha _{lower} =(\alpha _i , \ldots , \alpha _{\ell(\alpha)})$. Let $\rtau _{upper}\in \SRCT (\alpha _{upper} )$ and note that the $(j-(i-1))$-th part of $\alpha _{lower}$ has a removable node inherited from $\alpha _j$. Choose $\rtau _{lower} \in \SRCT (\alpha _{lower})$ to be an SRCT with a 1 in this removable node. Furthermore let $\rtau _{lower} + |\alpha _{upper}|$ be the array obtained by adding $ |\alpha _{upper}|$ to every entry of $\rtau _{lower}$.

Now consider $\rtau \in \SRCT(\alpha)$ whose first $i-1$ rows are identical to $\rtau _{upper}$ and whose remaining rows are identical to $\rtau _{lower} + |\alpha _{upper}|$. Observe that the entries in column $j$ of $\rtau$ do not strictly increase from top to bottom, establishing the claim.
\end{proof}

We are now in a position to classify those $\hn$-modules that are tableau-cyclic.
 
 \begin{theorem}\label{the:tableau_cyclic} $\smodule _\alpha$ is a tableau-cyclic $\hn$-module if and only if $\alpha$ is a simple composition of $n$.
 \end{theorem}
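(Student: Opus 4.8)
The plan is to reduce the statement to a count of equivalence classes under $\equiva$ and then invoke Lemmas~\ref{lem:11.3} and~\ref{lem:11.4}. First I would prove that $\smodule_\alpha$ is tableau-cyclic if and only if there is exactly one equivalence class under $\equiva$ on $\SRCT(\alpha)$. For the direction ``one class implies tableau-cyclic'', if $E$ is the unique class then $\smodule_\alpha = \smodule_{\alpha,E}$ by the decomposition~\eqref{eq:Ssum}, and this is cyclic with generator the source tableau $\rtau_{0,E}$ by Corollary~\ref{cor:equivcyclic}; since $\rtau_{0,E}$ is a single SRCT, $\smodule_\alpha$ is tableau-cyclic. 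For the converse, suppose $\smodule_\alpha$ is generated by a single SRCT $\rtau$ lying in the class $E$. By Lemma~\ref{lem:Emodule} each $\pi_i$ preserves $\smodule_{\alpha,E}$, so $\pi_\sigma(\rtau)\in\smodule_{\alpha,E}$ for every $\sigma\in\sgrp_n$, whence $\smodule_\alpha = \spam\{\pi_\sigma(\rtau)\}\subseteq \smodule_{\alpha,E}$. Comparing dimensions via~\eqref{eq:Ssum} forces $E$ to be the only class.

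Next I would translate ``one equivalence class'' into a statement about columns. By Definition~\ref{def:equiv}, two SRCTs of shape $\alpha$ are equivalent exactly when they share the same standardized column word, and the standardization of a single column word is the identity permutation precisely when that column increases from top to bottom. Hence all SRCTs of shape $\alpha$ lie in one class if and only if every SRCT of shape $\alpha$ has all of its columns increasing from top to bottom.

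With this reformulation the two directions follow from the structural lemmas. If $\alpha$ is simple, Lemma~\ref{lem:11.3} guarantees that every SRCT of shape $\alpha$ has increasing columns, so there is a single equivalence class and $\smodule_\alpha$ is tableau-cyclic. If $\alpha$ is complex, Lemma~\ref{lem:11.4} produces an SRCT $\rtau$ with a column that fails to increase; I would then exhibit a second SRCT whose columns all increase, for which the canonical tableau $\rtau_\alpha$ serves, since its entry in cell $(i,j)$ equals $(\alpha_1+\cdots+\alpha_i)-(j-1)$ and the partial sums strictly increase down any fixed column. Because $\rtau$ and $\rtau_\alpha$ have the same shape, hence equal column lengths, but differ in the standardization of the offending column, they lie in distinct classes, giving at least two equivalence classes, so $\smodule_\alpha$ is not tableau-cyclic.

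The main obstacle is the first step: correctly pinning down that tableau-cyclicity is equivalent to having a single equivalence class. This requires combining the $\hn$-module decomposition~\eqref{eq:Ssum} with the invariance in Lemma~\ref{lem:Emodule} and the cyclicity in Corollary~\ref{cor:equivcyclic}, and the subtlety is that an arbitrary cyclic generator need not be a source tableau, yet its orbit under all $\pi_\sigma$ must still remain inside the span of its own equivalence class. Once that bridge is in place, the column-increase reformulation together with Lemmas~\ref{lem:11.3} and~\ref{lem:11.4} closes the argument quickly.
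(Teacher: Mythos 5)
Your proposal is correct and follows essentially the same route as the paper: reduce tableau-cyclicity to having a single equivalence class under $\equiva$ via the decomposition \eqref{eq:Ssum} and Corollary~\ref{cor:equivcyclic}, then apply Lemma~\ref{lem:11.3} for simple $\alpha$ and Lemma~\ref{lem:11.4} together with the increasing columns of $\rtau_\alpha$ for complex $\alpha$. The only difference is that you spell out the ``cyclic generator stays in its own class'' argument explicitly, which the paper leaves implicit.
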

 
 \begin{proof} Let $\alpha$ be a simple composition. Then by Lemma~\ref{lem:11.3} we know that there is only one equivalence class under $\equiva$. Thus since 
 $$\smodule_{\alpha}\cong\bigoplus_{E}\smodule_{\alpha,E}$$where the sum is over the equivalence classes $E$ of $\equiva$, by Corollary~\ref{cor:equivcyclic} it follows that $\smodule _\alpha$ is a tableau-cyclic $\hn$-module if $\alpha$ is a simple composition of $n$.
 
 Conversely, let $\alpha$ be a complex composition. Then since the entries in each column of $\rtau _\alpha$ increase from top to bottom, by Lemma~\ref{lem:11.4} we are guaranteed the existence of at least two equivalence classes under the equivalence relation $\equiva$. Thus since 
 $$\smodule_{\alpha}\cong\bigoplus_{E}\smodule_{\alpha,E}$$where the sum is over the equivalence classes $E$ of $\equiva$, by Corollary~\ref{cor:equivcyclic} it follows that $\smodule _\alpha$ is not a tableau-cyclic $\hn$-module if $\alpha$ is a complex composition of $n$.
\end{proof}

Observe that if $\smodule _\alpha$ is not tableau-cyclic then it may still be cyclic. For example, $\smodule _{(2,2)}$ is not tableau-cyclic by Theorem~\ref{the:tableau_cyclic} since $(2,2)$ is a complex composition. However, it is cyclically generated by the following generator.
$$\tableau{2&1\\4&3}\ + \ \tableau{3&2\\4&1}$$We will now work towards classifying those $\hn$-modules that are indecomposable, which is a result that will sound familiar when we state it.

\begin{lemma}\label{lem:previous} Let $\alpha \vDash n$ and $\rtau \in \SRCT (\alpha)$ such that $\rtau \neq \rtau _\alpha$. Then there exists a positive integer $i \not\in \set (\alpha)$ such that $\pi _i (\rtau ) \neq \rtau$ but $\pi _i (\rtau _\alpha) = \rtau _\alpha$.
\end{lemma}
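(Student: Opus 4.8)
## Proof Proposal for Lemma~\ref{lem:previous}

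The plan is to compare an arbitrary $\rtau \in \SRCT(\alpha)$ with the canonical tableau $\rtau_\alpha$ and exploit the fact that $\rtau_\alpha$ is the \emph{unique} SRCT whose descent composition equals $\alpha$ itself, so that $\des(\rtau_\alpha) = \set(\alpha)$ exactly. Consequently, for any $i \notin \set(\alpha)$ we automatically have $i \notin \des(\rtau_\alpha)$, which by \eqref{eq:pi} gives $\pi_i(\rtau_\alpha) = \rtau_\alpha$. Thus the second condition in the conclusion is free for \emph{every} $i \notin \set(\alpha)$, and the entire content of the lemma reduces to producing a single $i \notin \set(\alpha)$ that is a \emph{non-attacking descent} of $\rtau$ (so that $\pi_i(\rtau) = s_i(\rtau) \neq \rtau$).

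First I would argue that since $\rtau \neq \rtau_\alpha$ but both have the same shape $\alpha$, the tableau $\rtau$ cannot be the unique tableau realizing the descent composition $(\alpha_1,\ldots,\alpha_k)$; hence $\des(\rtau) \neq \set(\alpha)$. The natural approach is to locate an $i \in \des(\rtau) \setminus \set(\alpha)$ and then verify it is non-attacking, but this requires care: it is conceivable that every descent of $\rtau$ lying outside $\set(\alpha)$ is an attacking descent, which would send $\pi_i(\rtau)$ to $0$ rather than to a new tableau. So the cleaner route is to apply Lemma~\ref{lem:sink}: if $\rtau$ admitted no non-attacking descent at all (equivalently, if $\pi_i(\rtau) \in \{\rtau, 0\}$ for all $i$), then $\rtau$ would be a \emph{sink} tableau. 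I would then show that the only possible sink tableau that could cause trouble is one forcing $\rtau = \rtau_\alpha$, deriving a contradiction.

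The main obstacle, and the heart of the argument, will be guaranteeing that the non-attacking descent $i$ I extract actually satisfies $i \notin \set(\alpha)$. The key structural observation I intend to use is that $\set(\alpha)$ records exactly where the ``row breaks'' occur in $\rtau_\alpha$: the entries $\alpha_1, \alpha_1+\alpha_2, \ldots$ are precisely the descents forced by the row-decreasing condition. For a general $\rtau \in \SRCT(\alpha)$, I would analyze the relationship between consecutive entries $i, i+1$ when $i \in \set(\alpha)$, showing that such positions behave rigidly across all tableaux of shape $\alpha$ (each $i \in \set(\alpha)$ necessarily \emph{is} a descent in every SRCT of shape $\alpha$, by counting the descents against the number of rows as in the uniqueness argument for $\rtau_\alpha$). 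Therefore any \emph{discrepancy} between $\rtau$ and $\rtau_\alpha$ must manifest through the indices outside $\set(\alpha)$, and it is among these that a non-attacking descent must appear.

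Concretely, I would proceed by contradiction: suppose that for every $i \notin \set(\alpha)$ we have $\pi_i(\rtau) = \rtau$, i.e. no such $i$ is a non-attacking descent. Combined with the fact that each $i \in \set(\alpha)$ is an attacking descent whenever it is a descent, this would force $\rtau$ to be a sink tableau with $\des(\rtau) \subseteq \set(\alpha)$; but a descent-count argument (the number of descents of any SRCT of shape $\alpha$ is at least $\ell(\alpha) - 1 = |\set(\alpha)|$) then pins $\des(\rtau) = \set(\alpha)$ exactly, and the uniqueness of the tableau with descent composition $\alpha$ forces $\rtau = \rtau_\alpha$, contradicting $\rtau \neq \rtau_\alpha$. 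This yields the desired $i \notin \set(\alpha)$ with $\pi_i(\rtau) \neq \rtau$, while $\pi_i(\rtau_\alpha) = \rtau_\alpha$ holds for free, completing the proof.
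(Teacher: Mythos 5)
Your final paragraph is, in substance, the paper's own proof read contrapositively: the paper argues directly that $\des(\rtau)\neq\des(\rtau_\alpha)$ (uniqueness of the canonical tableau) while $|\des(\rtau)|\geq|\des(\rtau_\alpha)|=\ell(\alpha)-1$, since every first-column entry except the largest (which is $n$) is a descent of any SRCT; hence some $i\in\des(\rtau)\setminus\set(\alpha)$ exists, and for that $i$ one has $\pi_i(\rtau)\in\{0,s_i(\rtau)\}$, neither of which equals $\rtau$, while $\pi_i(\rtau_\alpha)=\rtau_\alpha$. So the skeleton you end with is correct and essentially the same argument.

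However, two things in the middle of your write-up must be struck. First, you have misread what needs to be produced: the lemma asks only for $\pi_i(\rtau)\neq\rtau$, and $0\neq\rtau$, so an \emph{attacking} descent $i\notin\set(\alpha)$ serves just as well as a non-attacking one. Your stated worry that every descent of $\rtau$ outside $\set(\alpha)$ might be attacking is not an obstacle, and the detour through sink tableaux and Lemma~\ref{lem:sink} is unnecessary. Worse, your ``i.e.'' equating ``$\pi_i(\rtau)=\rtau$ for all $i\notin\set(\alpha)$'' with ``no such $i$ is a non-attacking descent'' is incorrect: the former is equivalent to $\des(\rtau)\subseteq\set(\alpha)$, the latter still permits attacking descents outside $\set(\alpha)$, and only the former supports your concluding descent-count step. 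Second, the parenthetical claim that each $i\in\set(\alpha)$ is necessarily a descent of \emph{every} SRCT of shape $\alpha$ is false: for $\alpha=(2,1,3)$ one has $\set(\alpha)=\{2,3\}$, yet Example~\ref{ex:Salpha} exhibits SRCTs of this shape with descent sets $\{2,4\}$ and $\{1,3,4\}$. What is true, and all that is needed, is the cardinality bound $|\des(\rtau)|\geq\ell(\alpha)-1=|\set(\alpha)|$. With those corrections your last paragraph stands on its own and coincides with the paper's argument.
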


\begin{proof} Since $\rtau _\alpha$ is the unique SRCT of shape $\alpha$ satisfying $\comp (\rtau_\alpha ) = \alpha$ it follows that $\des (\rtau ) \neq \des (\rtau _\alpha)$. Furthermore, $|\des (\rtau )| \geq |\des (\rtau _\alpha)|$ since every element in the first column except the largest contributes to the descent set of an SRCT. Hence we are guaranteed the existence of a positive integer $i$ such that $i \in \des (\rtau)$ but $i \not\in \des (\rtau _\alpha)$, and moreover, $\pi _i (\rtau ) \neq \rtau$ but $\pi _i (\rtau _\alpha) = \rtau _\alpha$.
\end{proof}

Now we are ready to identify those $\hn$-modules that are indecomposable.

\begin{theorem}\label{the:indecomposable} $\smodule _\alpha$ is an indecomposable $\hn$-module if and only if $\alpha$ is a simple composition of $n$.
\end{theorem}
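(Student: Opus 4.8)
The plan is to prove both directions by exploiting the decomposition $\smodule_\alpha \cong \bigoplus_E \smodule_{\alpha,E}$ of \eqref{eq:Ssum} together with the simple/complex dichotomy of Section~\ref{sec:cyclic_and_indecomposable_modules}. For the \emph{complex} direction I would argue exactly as in the complex case of Theorem~\ref{the:tableau_cyclic}: if $\alpha$ is complex then Lemma~\ref{lem:11.4} produces an SRCT with a strictly non-increasing column, whereas $\rtau_\alpha$ has every column increasing, so these two tableaux lie in distinct classes under $\equiva$. Hence there are at least two equivalence classes $E$, each $\smodule_{\alpha,E}$ is a nonzero $\hn$-module, and \eqref{eq:Ssum} realises $\smodule_\alpha$ as a direct sum of at least two nonzero submodules. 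Thus $\smodule_\alpha$ is decomposable.

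The substance lies in the \emph{simple} direction, where I would show that $\smodule_\alpha$ has no idempotent endomorphism other than $0$ and $\mathrm{id}$, which is equivalent to indecomposability since $\smodule_\alpha$ is finite dimensional. First I would record that $\rtau_\alpha$ is a source tableau: for every $i\notin\set(\alpha)=\des(\rtau_\alpha)$ the entries $i$ and $i+1$ lie in a common row with $i+1$ immediately left of $i$, which is precisely the condition of Definition~\ref{def:source}. When $\alpha$ is simple, Lemma~\ref{lem:11.3} gives a single equivalence class, so by Corollary~\ref{cor:equivcyclic} the module $\smodule_\alpha$ is cyclic, and by uniqueness of the source tableau (Corollary~\ref{cor:uniquesourceandsink}) it is generated by $\rtau_\alpha$.

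Next I would analyse the common fixed space $W=\{v\in\smodule_\alpha \suchthat \pi_i(v)=v \text{ for all } i\notin\set(\alpha)\}=\bigcap_{i\notin\set(\alpha)}\operatorname{im}(\pi_i)$, the second equality holding because each $\pi_i$ is idempotent. Since $\pi_i(\rtau)\in\{\rtau,0,s_i(\rtau)\}$, one checks $\operatorname{im}(\pi_i)=\spam\{\sigma\in\SRCT(\alpha)\suchthat i\notin\des(\sigma)\}$, a coordinate subspace for the SRCT basis; intersecting coordinate subspaces then yields $W=\spam\{\sigma\in\SRCT(\alpha)\suchthat \des(\sigma)\subseteq\set(\alpha)\}$. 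Because every first-column entry other than $n$ is a descent, every $\sigma\in\SRCT(\alpha)$ satisfies $|\des(\sigma)|\geq\ell(\alpha)-1=|\set(\alpha)|$, so $\des(\sigma)\subseteq\set(\alpha)$ forces $\des(\sigma)=\set(\alpha)$, whence the uniqueness of the canonical tableau gives $\sigma=\rtau_\alpha$. Therefore $W=\spam\{\rtau_\alpha\}$ is one dimensional.

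Finally, any $f\in\operatorname{End}_{\hn}(\smodule_\alpha)$ commutes with every $\pi_i$ and so preserves $W$; hence $f(\rtau_\alpha)=c\,\rtau_\alpha$, and if $f$ is idempotent then $c^2=c$, so $c\in\{0,1\}$. Since $\rtau_\alpha$ generates $\smodule_\alpha$, the value $f(\rtau_\alpha)$ determines $f$ entirely, giving $f=0$ or $f=\mathrm{id}$ and hence indecomposability. I expect the main obstacle to be the middle step, namely pinning down $W$ precisely: it needs the clean description of $\operatorname{im}(\pi_i)$ as a coordinate subspace, the descent inequality $|\des(\sigma)|\geq\ell(\alpha)-1$, and the uniqueness of $\rtau_\alpha$; everything after that is formal. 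It is worth noting that the identification $W=\spam\{\rtau_\alpha\}$ holds for \emph{all} $\alpha$, and simplicity enters only at the very last step, through the fact that $\rtau_\alpha$ generates the whole module.
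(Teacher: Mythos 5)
Your proof is correct and follows essentially the same route as the paper: the complex direction is identical, and the simple direction is the paper's argument (idempotent endomorphisms, cyclicity from Theorem~\ref{the:tableau_cyclic}, and the fact that $\sigma\neq\rtau_\alpha$ forces a descent outside $\set(\alpha)$, which is exactly Lemma~\ref{lem:previous}) with the middle step repackaged: where the paper shows coefficient-by-coefficient that $a_{\rtau_1}=0$ because $\rtau_1\notin\operatorname{im}(\pi_i)$ for a suitable $i$, you globalize this into the cleaner statement that the common fixed space $\bigcap_{i\notin\set(\alpha)}\operatorname{im}(\pi_i)$ equals $\spam\{\rtau_\alpha\}$ and is preserved by any endomorphism. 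This is a cosmetic (if slightly tidier) reformulation resting on the same descent-counting facts, so no substantive difference.
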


\begin{proof}
Let $\alpha$ be a simple composition. We will show that every idempotent module morphism from $\smodule_{\alpha}$ to itself is either the zero map or the identity map. This in turn implies that $\smodule_{\alpha}$ is indecomposable \cite[Proposition 3.1]{jacobson}.
 
Let $f$ be an idempotent module endomorphism of $\smodule_{\alpha}$. Suppose that 
\begin{eqnarray}
f(\rtau_{\alpha})=\displaystyle\sum_{\rtau\in \SRCT(\alpha)}a_{\rtau}\rtau.
\end{eqnarray}
By Lemma~\ref{lem:previous} we know that given $\rtau _1\in \SRCT(\alpha)$ such that $\rtau _1\neq \rtau_{\alpha}$ there exists a positive integer $i\notin \set(\alpha)$ such that $\pi_i(\rtau _1)\neq \rtau _1$
 but $\pi_i(\rtau_{\alpha})=\rtau_{\alpha}$. The fact that $f$ is a module morphism implies that $f(\pi_i(\rtau_{\alpha}))=\pi_i(f(\rtau_{\alpha}))$. However, $\pi_i(\rtau_{\alpha})=\rtau_{\alpha}$, so we get that 
\begin{eqnarray*}
f(\rtau_{\alpha})&=&f(\pi_i(\rtau_{\alpha}))\nonumber\\&=&\pi_i(f(\rtau_{\alpha}))\nonumber\\ &=& \displaystyle\sum_{{\rtau}\in \SRCT(\alpha)}a_{{\rtau}}\pi_i({\rtau}).
\end{eqnarray*}
Now, note that the coefficient of $\rtau _1\neq \rtau _\alpha$ in the sum in the last line above is $0$. This is because there does not exist a $\rtau_{2}\in \SRCT(\alpha)$ such that $\pi_i(\rtau_{2})=\rtau _1$ (else if $\pi_i(\rtau_{2})=\rtau _1$ then $\pi_i(\rtau_{2})=\pi_i\pi_i(\rtau_{2})=\pi_i(\rtau _1)\neq\rtau _1$, a contradiction).

Thus, we get that $f(\rtau_{\alpha})=a_{\rtau_{\alpha}}\rtau_{\alpha}$. Again, using the fact that $f$ is idempotent, we get that $a_{\rtau_{\alpha}}=0$ or $a_{\rtau_{\alpha}}=1$. Since $\smodule_{\alpha}$ is cyclically generated by $\rtau_{\alpha}$ by Theorem~\ref{the:tableau_cyclic}, we get that $f$ is either the zero map or the identity map. Hence, $\smodule_{\alpha}$ is indecomposable.

Conversely, if $\alpha$ is a complex composition, then since the entries in each column of $\rtau _\alpha$ increase from top to bottom, by Lemma~\ref{lem:11.4} we are guaranteed the existence of at least two equivalence classes under the equivalence relation $\equiva$. Thus since 
  $$\smodule_{\alpha}\cong\bigoplus_{E}\smodule_{\alpha,E}$$where the sum is over the equivalence classes $E$ of $\equiva$, it follows that $\smodule _\alpha$ is not indecomposable.
\end{proof}

\section{The canonical basis and enumeration of truncated shifted reverse tableaux}\label{sec:canonical_basis} Recall that given any composition $\alpha$, we denote its canonical tableau, which is the unique SRCT of shape $\alpha$ and descent composition $\alpha$, by $\rtau _\alpha$. In this section we discover a new basis for $\Qsym$ arising from the orbit of each canonical tableau and connect it to the enumeration of truncated shifted reverse tableaux studied in \cite{adin-king-roichman, panova}. For ease of notation we denote the orbit of $\rtau _\alpha$ by $E_\alpha$, and since the entries in every column of $\rtau _\alpha$ increase from top to bottom, we have by definition that
\begin{align*}
E_\alpha = \{ \rtau \suchthat & \rtau \in \SRCT (\alpha) \text{ and entries in each column of }\rtau \\
&\text{ increase from top to bottom } \}.
\end{align*}

Repeating our argument from Section~\ref{sec:0Hecke}, but now extending the partial order $\po$ on $E_\alpha$ to a total order on $E_\alpha$, we obtain the expansion of the quasisymmetric characteristic of the $\hn$-module $\smodule _{\alpha, E_\alpha}$ in terms of fundamental quasisymmetric functions as follows.
\begin{equation}\label{eq:canonicalchar}
ch([\smodule _{\alpha , E_{\alpha}}]) = \sum _{\rtau \in E_\alpha}F _{\comp(\rtau)}
\end{equation}
Denoting the above function by $C_\alpha$ leads us to the following definition.

\begin{definition}\label{def:canonical} Let $\alpha \vDash n$. Then we define the \emph{canonical quasisymmetric function} $C_\alpha$ to be
\begin{equation}\label{eq:canonical}
C_\alpha = \sum_{\rtau} F _{\comp(\rtau)}
\end{equation}
where the sum is over all SRCT $\rtau$ of shape $\alpha$ whose entries in each column increase from top to bottom.
\end{definition}

In turn, this leads naturally to a new basis for $\Qsym$. However, before we reveal this new basis, we need to recall two total orders on compositions remembering that we denote by $\partitionof{\alpha}$ the rearrangement of the parts of $\alpha$ in weakly decreasing order. First, given distinct compositions $\alpha,\beta \vDash n$, we say that $\alpha$ is \emph{lexicographically greater} than $\beta$, denoted by $\alpha >_{lex} \beta$, if $\alpha =(\alpha_1,\alpha_2,\ldots)$ and $\beta=(\beta_1,\beta_2,\ldots)$ satisfy the condition that for the smallest positive integer $i$ such that $\alpha_i\neq \beta_i$, one has $\alpha_i>\beta_i$. Second, we say that $\alpha \blacktriangleright \beta$ if $\partitionof{\alpha} >_{lex} \partitionof{\beta}$ or $\partitionof{\alpha}=\partitionof{\beta}$ and $\alpha >_{lex} \beta$.

\begin{proposition}\label{prop:canonicalbasis}
The set $\{{C}_{\alpha}\suchthat \alpha\vDash n\}$ forms a $\mathbb{Z}$-basis for $\Qsym ^n$.
\end{proposition}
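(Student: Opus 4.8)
The plan is to show that the matrix expressing the functions $C_\alpha$ in the fundamental basis $\{F_\beta : \beta \vDash n\}$ is unitriangular with respect to a suitable total order, and then to conclude that it is invertible over $\bZ$. Both index sets here are the compositions of $n$, so the matrix is square; an integer unitriangular matrix is invertible over $\bZ$, and that at once exhibits $\{C_\alpha\}$ as a $\bZ$-basis of $\Qsym^n$. I will use the order $\btr$ recalled just before the statement, noting that it is a genuine total order on compositions of $n$: $>_{lex}$ totally orders the underlying partitions, and totally orders the compositions sharing a fixed underlying partition.

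First I would pin down the diagonal term. By Definition~\ref{def:canonical} we have $C_\alpha = \sum_{\rtau \in E_\alpha} F_{\comp(\rtau)}$, where $E_\alpha$ is the orbit of $\rtau _\alpha$, consisting of all SRCT of shape $\alpha$ whose columns increase from top to bottom. Since $\comp(\rtau _\alpha)=\alpha$ and $\rtau _\alpha \in E_\alpha$, the function $F_\alpha$ occurs in $C_\alpha$; moreover, because $\rtau _\alpha$ is the \emph{unique} SRCT of shape $\alpha$ with descent composition $\alpha$ (the defining property of the canonical tableau recalled in Section~\ref{sec:background}), it occurs with coefficient exactly $1$. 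It remains to control the off-diagonal terms.

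The heart of the argument is the claim that for every $\rtau \in \SRCT(\alpha)$ with $\comp(\rtau)=\beta \neq \alpha$ one has $\alpha \btr \beta$. Granting this, observe that $E_\alpha \subseteq \SRCT(\alpha)$, so every $F_\beta$ appearing in the sub-sum $C_\alpha$ (other than the diagonal $F_\alpha$) also appears in $\qs_\alpha = \sum_{\rtau \in \SRCT(\alpha)} F_{\comp(\rtau)}$, and hence satisfies $\alpha \btr \beta$. Consequently
\[
C_\alpha = F_\alpha + \sum_{\beta \,:\, \alpha \btr \beta} c_{\alpha\beta}\, F_\beta, \qquad c_{\alpha\beta} \in \bZ_{\geq 0}.
\]
Ordering the rows and columns by $\btr$ makes the coefficient matrix unitriangular with integer entries, hence invertible over $\bZ$, which finishes the proof.

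This reduces everything to the displayed claim, which is exactly the unitriangularity of the quasisymmetric Schur functions in the fundamental basis; I would either cite it from \cite{HLMvW-1} or establish it directly as follows. Fix $\rtau \in \SRCT(\alpha)$ and set $\beta = \comp(\rtau)$. Reading $1,\ldots,n$ through $\rtau$, a value $i$ is a non-descent precisely when $i+1$ lies strictly to the left of $i$, so along any maximal block of consecutive non-descents the columns strictly decrease; since the columns are indexed by $1,\ldots,\alpha _{max}$, this shows each part of $\beta$ has size at most $\alpha _{max}$, whence the largest part of $\partitionof\beta$ is at most $\alpha _{max}$, the largest part of $\partitionof\alpha$. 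Likewise every first-column entry except the largest is a descent, so $\ell(\beta)\geq \ell(\alpha)$. The main obstacle is to upgrade these two inequalities to the full lexicographic comparison $\partitionof\beta \leq_{lex} \partitionof\alpha$, together with the tie-break that $\partitionof\beta=\partitionof\alpha$ forces $\beta \leq_{lex} \alpha$ with equality only at $\rtau=\rtau _\alpha$: this needs a careful count, level by level, of how many maximal runs of each length a shape-$\alpha$ SRCT can support, and is the one genuinely delicate point. Passing to the sub-sum $C_\alpha$ from the already established triangularity of $\qs_\alpha$ is the cleanest way to discharge it.
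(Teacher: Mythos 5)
Your proposal is correct and follows essentially the same route as the paper: both arguments deduce the unitriangularity of the matrix expressing $\{C_\alpha\}$ in the fundamental basis (with respect to $\blacktriangleright$) from the known unitriangularity of $\{\qs_\alpha\}$ proved in \cite{HLMvW-1}, using $E_\alpha \subseteq \SRCT(\alpha)$ to inherit the off-diagonal triangularity and $\rtau_\alpha \in E_\alpha$ together with the uniqueness of the canonical tableau to pin the diagonal entry at $1$. Your optional sketch of a direct proof of the triangularity of $\qs_\alpha$ is incomplete (as you acknowledge), but since you correctly fall back on the citation, the proof stands.
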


\begin{proof}
We know from \cite[Proposition 5.5]{HLMvW-1} that the transition matrix expressing the quasisymmetric Schur functions in terms of the basis of fundamental quasisymmetric functions is upper unitriangular when the indexing compositions are ordered using $\blacktriangleright$. Thus, we immediately get that the transition matrix $M$ expressing $\{{C}_{\alpha}\suchthat \alpha\vDash n\}$ in terms of $\{ {F}_{\alpha}\suchthat \alpha\vDash n\}$ is also upper triangular with the indexing compositions ordered using $\blacktriangleright$. Now, since $\rtau_{\alpha}\in E_{\alpha}$, by the definition of $C_\alpha$ above we have that $ {F}_{\alpha}$ appears with coefficient $1$ in this expansion of ${C}_{\alpha}$. Thus, the matrix $M$ is also upper unitriangular and this implies the claim.
\end{proof}

\begin{proposition}\label{prop:canonicalindecomposable}
The set of $\hn$-modules $\{\smodule_{\alpha,E_{\alpha}} \suchthat \alpha\vDash n\}$ is a set of pairwise non-isomorphic indecomposable modules.
\end{proposition}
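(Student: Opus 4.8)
The plan is to prove the two assertions separately: indecomposability of each $\smodule_{\alpha,E_\alpha}$, and the pairwise non-isomorphism of the family. The first will be a transcription of the argument proving Theorem~\ref{the:indecomposable}, now carried out inside the single equivalence class $E_\alpha$ rather than across all of $\SRCT(\alpha)$; the second will drop out of Proposition~\ref{prop:canonicalbasis}.

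For indecomposability, the first step I would take is to identify the cyclic generator. Since every column of $\rtau_\alpha$ increases from top to bottom we have $\rtau_\alpha\in E_\alpha$; moreover for each $i\notin\des(\rtau_\alpha)=\set(\alpha)$ the entries $i$ and $i+1$ lie in a common row of $\rtau_\alpha$ with $i+1$ immediately left of $i$, which is exactly the defining condition for a source tableau. Hence $\rtau_\alpha$ is \emph{the} source tableau $\rtau_{0,E_\alpha}$ of $E_\alpha$ by Corollary~\ref{cor:uniquesourceandsink}, and so by Corollary~\ref{cor:equivcyclic} the module $\smodule_{\alpha,E_\alpha}$ is cyclically generated by $\rtau_\alpha$. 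With this in place I would repeat the idempotent argument of Theorem~\ref{the:indecomposable}: let $f$ be an idempotent endomorphism of $\smodule_{\alpha,E_\alpha}$ and write $f(\rtau_\alpha)=\sum_{\rtau\in E_\alpha}a_\rtau\,\rtau$. Given $\rtau_1\in E_\alpha$ with $\rtau_1\neq\rtau_\alpha$, Lemma~\ref{lem:previous} furnishes an $i\notin\set(\alpha)$ with $\pi_i(\rtau_1)\neq\rtau_1$ but $\pi_i(\rtau_\alpha)=\rtau_\alpha$; comparing the coefficient of $\rtau_1$ in $f(\rtau_\alpha)=\pi_i(f(\rtau_\alpha))=\sum_\rtau a_\rtau\,\pi_i(\rtau)$ and using $\pi_i^2=\pi_i$ to rule out any $\rtau_2$ with $\pi_i(\rtau_2)=\rtau_1$ forces $a_{\rtau_1}=0$. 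Thus $f(\rtau_\alpha)=a_{\rtau_\alpha}\rtau_\alpha$ with $a_{\rtau_\alpha}\in\{0,1\}$ by idempotency, and cyclic generation by $\rtau_\alpha$ gives $f=0$ or $f=\mathrm{id}$, so $\smodule_{\alpha,E_\alpha}$ is indecomposable by the idempotent criterion already invoked in Theorem~\ref{the:indecomposable}.

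For the pairwise non-isomorphism, I would use that $ch([\smodule_{\alpha,E_\alpha}])=C_\alpha$ by \eqref{eq:canonicalchar}. Isomorphic $\hn$-modules define the same class in the Grothendieck group and therefore share the same quasisymmetric characteristic, so it suffices to check that the $C_\alpha$ for $\alpha\vDash n$ are pairwise distinct. This is immediate from Proposition~\ref{prop:canonicalbasis}, which states that $\{C_\alpha\suchthat\alpha\vDash n\}$ is a basis of $\Qsym^n$ and in particular is a set of distinct elements.

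The routine-looking but genuinely necessary checks are what I expect to be the only obstacles. First, one must confirm that the coefficient-killing step stays inside $\smodule_{\alpha,E_\alpha}$: this is guaranteed because $\pi_i$ preserves the span of $E_\alpha$ by Lemma~\ref{lem:Emodule}, so each $\pi_i(\rtau)$ appearing is again a multiple of a single element of $E_\alpha$ (or $0$) and the coefficient comparison is legitimate. Second, one must be careful to generate the module by $\rtau_\alpha$ specifically and not merely by an unspecified source tableau, which is why the identification $\rtau_\alpha=\rtau_{0,E_\alpha}$ above is the load-bearing observation. Neither point is hard, but both are needed for the transcription of Theorem~\ref{the:indecomposable} to be valid; once they are settled the non-isomorphism half is essentially free.
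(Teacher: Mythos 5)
Your proposal is correct and follows the same route as the paper, which likewise deduces pairwise non-isomorphism from Proposition~\ref{prop:canonicalbasis} and indecomposability by repeating the idempotent-endomorphism argument of Theorem~\ref{the:indecomposable}. The only difference is that you spell out the details the paper leaves implicit, in particular the identification of $\rtau_\alpha$ as the unique source tableau of $E_\alpha$ so that Corollary~\ref{cor:equivcyclic} yields cyclic generation by $\rtau_\alpha$; this is a worthwhile check and is exactly what makes the transcription legitimate.
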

\begin{proof}
The fact that $\{\smodule_{\alpha,E_{\alpha}} \suchthat \alpha\vDash n\}$ consists of pairwise non-isomorphic $\hn$-modules follows immediately from Proposition~\ref{prop:canonicalbasis}. The proof of the fact that each of these modules is indecomposable is exactly the same as in the proof of Theorem \ref{the:indecomposable}.
\end{proof}

\begin{example}\label{ex:canonical}
Let $\alpha=(3,2,4)$. Then $E_\alpha$ consists of the following SRCTs.
$$
\tableau{
 \tcr{3}&2&1\\\tcr{5}&4\\9&8&7&6
}
\hspace{3mm}
\tableau{
 \tcr{3}&2&1\\ \tcr{6}&\tcr{4} \\9&8&7&5
}
\hspace{3mm}
\tableau{
 \tcr{3}&2&1\\ \tcr{7}& \tcr{4}\\9&8&6&5
}
\hspace{3mm}
\tableau{
 \tcr{3}&2&1\\ \tcr{7}& \tcr{5}\\9&8&6&4
}
\hspace{3mm}
\tableau{
\tcr{3}&2&1\\\tcr{6}&5\\9&8&7&4
}
$$
\medskip
$$
\tableau{
\tcr{4}&\tcr{2}&1\\\tcr{6}&5\\9&8&7&3
}
\hspace{3mm}
\tableau{
 \tcr{4}& \tcr{2}&1\\ \tcr{7}& \tcr{5}\\9&8&6&3
}
\hspace{3mm}
\tableau{
\tcr{4}&3&\tcr{1}\\\tcr{6}&5\\9&8&7&2
}
\hspace{3mm}
\tableau{
 \tcr{{}4}&3& \tcr{1}\\ \tcr{7}& \tcr{5}\\9&8&6&2
}
$$

In the list above, the top-leftmost tableau is the source tableau $\rtau _{0, E_\alpha} = \rtau _\alpha$, while the bottom-rightmost tableau is the sink tableau $\rtau _{1, E_\alpha}$, and the descents are marked in red. Thus, we calculate the canonical quasisymmetric function ${C} _{(3,2,4)}$ to be
\begin{eqnarray*}
{C} _{(3,2,4)}&=&F_{(3,2,4)}+F_{(3,1,2,3)}+F_{(3,1,3,2)}+F_{(3,2,2,2)}+F_{(3,3,3)}\\&&+F_{(2,2,2,3)}+F_{(2,2,1,2,2)}+F_{(1,3,2,3)}+F_{(1,3,1,2,2)}.
\end{eqnarray*}
\end{example}

 \subsection{Dimensions of certain \texorpdfstring{$\smodule_{\alpha,E_{\alpha}}$}{classes} and truncated shifted reverse tableaux}
For most of this subsection, we will be interested in compositions $\alpha=(\alpha_1,\ldots,\alpha_k)$ satisfying the additional constraint that $\alpha_1 < \cdots < \alpha_k$. We will refer to such compositions as \textit{strict reverse partitions}.

\begin{definition}\label{def:shifted_reverse_diagram}
Given a strict reverse partition $\alpha=(\alpha_1,\ldots,\alpha_k)$, the \emph{shifted reverse diagram} of $\alpha$ contains $\alpha_i$ cells in the $i$-th row from the top with the additional condition that, for $2\leq i\leq k$, row $i$ starts one cell to the left of where row $i-1$ starts.
\end{definition}

\begin{definition}\label{def:shifted_reverse_tableau}
Given a strict reverse partition $\alpha=(\alpha_1,\ldots,\alpha_k)\vDash n$, a \emph{shifted reverse tableau} of shape $\alpha$ is a filling of the cells of the shifted reverse diagram of $\alpha$ with distinct positive integers from $1$ to $n$ so that
\begin{enumerate}
\item the entries decrease from left to right in every row, and
\item the entries increase from top to bottom in every column.
\end{enumerate}
\end{definition}

\begin{example}\label{ex:shiftedrt}
Let $\alpha=(2,4,5)$. A shifted reverse tableau of shape $\alpha$ is given below.
$$\tableau{
  &   & 2 &1\\  & 8 & 6 & 5 & 3\\ 11& 10 & 9 & 7 & 4 
}$$
\end{example}

Next, we will discuss {truncated shifted reverse tableaux}. Let $\alpha=(\alpha_1,\ldots,\alpha_k)$ and $\beta=(\beta_1,\ldots,\beta_s)$ be strict reverse partitions such that $s\leq k$ and $\beta_{s-i+1}\leq \alpha_{k-i+1}$ for $1\leq i\leq s$. We define the \textit{truncated shifted reverse diagram} of shape $\alpha\backslash \beta$ as the array of cells where row $i$ starts one cell to the left of where row $i-1$ starts and contains $\alpha_i$ cells if $1\leq i\leq k-s$, and $\alpha_i-\beta_{i+s-k}$ cells if $k-s +1 \leq i \leq k$. Here again, the rows are considered from top to bottom.
This given, we define a \emph{truncated shifted reverse tableau} of \emph{shape} $\alpha \backslash
\beta$ to be a filling satisfying the same conditions as a shifted reverse tableau.

\begin{example}\label{ex:tsrt}
Given below is a truncated shifted reverse tableau of shape $(2,4,5)\backslash(1,2)$.
$$\tableau{
  &   & 2 &1\\  & 5 & 4 & 3 & \bullet\\ 8& 7 & 6 &\bullet &\bullet 
}$$
\end{example}

Given positive integers $a\geq b$, let $\delta _{[a,b]}$ denote the strict reverse partition $(b,b+1,\ldots, a-1,a)$. If $b=1$, we will denote $\delta_{[a,b]}$ by just $\delta_{a}$. 

Now we have all the notation that we need to give a count for the dimension of $\smodule_{\alpha,E_{\alpha}}$ for certain $\alpha$.

\begin{theorem}\label{the:dimsmodule}
Let $\alpha=(\alpha_1,\ldots,\alpha_k)$ be a strict reverse partition. Then the number of SRCTs that belong to $E_{\alpha}$, that is $\dim \smodule _{\alpha, E_\alpha }$, is equal to the number of shifted reverse tableaux of shape $\alpha$.
\end{theorem}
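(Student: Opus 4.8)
The plan is to set up an explicit, shape-transforming bijection between $E_\alpha$ and the shifted reverse tableaux of shape $\alpha$, and then read off the dimension from $\dim\smodule_{\alpha,E_\alpha}=|E_\alpha|$. Write $\alpha=(\alpha_1,\dots,\alpha_k)$ with $k=\ell(\alpha)$, and define the \emph{unshifting map} $\Phi$ that turns a filling of the left-justified reverse composition diagram of $\alpha$ into a filling of the shifted reverse diagram by sliding the entries of row $i$ rigidly to the right by $k-i$ cells, for $1\le i\le k$. As this merely relabels cell positions, $\Phi$ is automatically a bijection of fillings; its one salient feature is that the cells $(i,j)$ and $(i+1,j+1)$ of the composition diagram land in the same column of the shifted diagram. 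Hence the columns of the shifted diagram are exactly the diagonals $\{(i,j):j-i \text{ constant}\}$ of the composition diagram read from top to bottom, and each such column occupies a contiguous block of rows because $\alpha_i-i$ is nondecreasing for a strict reverse partition. Since $\alpha$ is a strict reverse partition it is in particular simple, so Lemma~\ref{lem:11.3} gives that every element of $\SRCT(\alpha)$ has strictly increasing columns; that is, $E_\alpha=\SRCT(\alpha)$.

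First I would verify that $\Phi$ sends $E_\alpha$ into the shifted reverse tableaux. The decreasing-row condition transfers verbatim, so only the increasing-column condition of the target — equivalently, increase along each diagonal $(i,j)\to(i+1,j+1)$ — requires an argument. If some diagonally adjacent pair were inverted, say $\rtau(i,j)>\rtau(i+1,j+1)$, then the triple rule of Definition~\ref{def:SRCT} would force $\rtau(i,j+1)$ to exist and satisfy $\rtau(i,j+1)>\rtau(i+1,j+1)$; this, however, contradicts the increase down column $j+1$ guaranteed by membership in $E_\alpha$. Hence $\Phi(\rtau)$ is a genuine shifted reverse tableau.

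Conversely I would check that $\Phi^{-1}$ returns a shifted reverse tableau $S$ to $E_\alpha$. By construction $\rtau:=\Phi^{-1}(S)$ has strictly decreasing rows and strictly increasing diagonals, and I must show it is an SRCT with increasing columns. Increasing columns is immediate, since $\rtau(i,j)<\rtau(i+1,j+1)<\rtau(i+1,j)$ combines diagonal-increase with row-decrease (the cell $(i+1,j+1)$ is present because $\alpha_{i+1}>\alpha_i$); the case $j=1$ is the first-column condition demanded of an SRCT. The remaining requirement, the triple rule, is the step I expect to be the main obstacle, and it is where strictness of $\alpha$ is essential. The key claim is that the hypothesis of the triple rule can never be met: for rows $r<r'$ with $\rtau(r,c)$ and $\rtau(r',c+1)$ present one always has $\rtau(r,c)<\rtau(r',c+1)$. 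Indeed $\alpha_r\ge c$ forces $\alpha_{r'}\ge\alpha_r+(r'-r)\ge c+(r'-r)$, so the entire diagonal $(r,c),(r+1,c+1),\dots,(r',c+(r'-r))$ lies inside the diagram; diagonal-increase gives $\rtau(r,c)<\rtau(r',c+(r'-r))$, and since $c+(r'-r)\ge c+1$ row-decrease gives $\rtau(r',c+(r'-r))\le\rtau(r',c+1)$, so $\rtau(r,c)<\rtau(r',c+1)$. Thus the triple rule holds vacuously and $\rtau\in E_\alpha$.

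Putting the two inclusions together shows that $\Phi$ restricts to a bijection from $E_\alpha$ onto the set of shifted reverse tableaux of shape $\alpha$. Since the elements of $E_\alpha$ form a basis of $\smodule_{\alpha,E_\alpha}$, we conclude that $\dim\smodule_{\alpha,E_\alpha}=|E_\alpha|$ equals the number of shifted reverse tableaux of shape $\alpha$, as claimed. The only bookkeeping point worth double-checking while writing this up is the contiguity of the shifted columns noted above, which makes ``increasing down a column'' equivalent to the diagonal comparisons used throughout.
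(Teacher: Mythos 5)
Your proof is correct and is essentially the paper's argument: the same row-shifting bijection between $E_\alpha$ and shifted reverse tableaux (your right-shift by $k-i$ is the paper's left-shift by $i-1$ up to a global translation). You supply more detail than the paper does on the triple-rule verification and the contiguity of the shifted columns, and your opening observation that $E_\alpha=\SRCT(\alpha)$ via Lemma~\ref{lem:11.3} is true but not needed for the bijection.
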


\begin{proof}
 Let $\rtau\in E_{\alpha}$. Consider a cell $(i,j)$ that belongs to the reverse composition diagram of $\alpha$ such that $(i-1,j-1)$ also belongs to the diagram. Then the triple rule implies that $\rtau_{(i,j)}>\rtau_{(i-1,j-1)}$, as $\rtau \in E_{\alpha}$.
Now consider the filling obtained by shifting row $i$ by $i-1$ cells to the left. The resulting filling has shifted reverse shape $\alpha$ and satisfies the following two conditions.
 \begin{enumerate}
 \item The entries along every row decrease from left to right.
 \item The entries along every column increase from top to bottom.
 \end{enumerate}
 Thus, the resulting filling is a shifted reverse tableau of shape $\alpha$, and it is easily seen that it is uniquely determined by $\rtau$.
 
We can also invert the above procedure. Starting from a shifted reverse tableau of shape $\alpha$, consider the filling obtained by shifting row $i$ by $i-1$ cells to the right. We claim that the resulting filling is an SRCT that belongs to $E_{\alpha}$. To see this, first notice that the entry in a fixed cell in the shifted reverse tableau is strictly greater than the entry in any cell that lies weakly north-east of it. Thus, when row $i$ is shifted $i-1$ cells to the right, we are guaranteed that the entries in every column in the resulting filling (which is of reverse composition shape $\alpha$) increase from top to bottom. The entries in every row decrease from left to right as they did in the shifted reverse tableau. The fact that there are no violations of the triple rule also follows from the observation made earlier in the paragraph. Thus, we obtain an element of $E_{\alpha}$ indeed, and this finishes the proof.
 \end{proof}
 
Repeating the same procedure as outlined in the proof above allows us to prove the following statements too.
 
 \begin{proposition}\label{prop:rectangle}
Let $n,k$ be positive integers and $\alpha=(n^k)$, that is, $\alpha$ consists of $k$ parts equal to $n$. Then the number of SRCTs that belong to $E_{\alpha}$, that is $\dim \smodule _{\alpha, E_\alpha }$, is equal to the number of truncated shifted reverse tableaux of shape $\delta_{[n+k-1,n]}\backslash \delta_{k-1}$.
 \end{proposition}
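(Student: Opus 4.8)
The plan is to reuse verbatim the shifting bijection constructed in the proof of Theorem~\ref{the:dimsmodule}, and then to check that for the rectangular shape $\alpha=(n^k)$ this bijection lands precisely in the set of truncated shifted reverse tableaux of shape $\delta_{[n+k-1,n]}\backslash\delta_{k-1}$. The reverse composition diagram of $\alpha$ is the $k\times n$ rectangle. Given $\rtau\in E_\alpha$, I would form the filling obtained by shifting row $i$ to the left by $i-1$ cells for $1\le i\le k$, exactly as in Theorem~\ref{the:dimsmodule}. Because $\rtau\in E_\alpha$, the same triple-rule argument used there gives $\rtau_{(i,j)}>\rtau_{(i-1,j-1)}$ whenever both cells lie in the diagram, and under the shift the cells $(i,j)$ and $(i-1,j-1)$ are carried into the same column; hence this diagonal inequality becomes a strict increase down the columns of the shifted filling, while the left-to-right decrease along rows is unaffected by the shift. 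Thus the shifted filling satisfies the two defining conditions of a shifted reverse tableau, and the construction is manifestly invertible by shifting row $i$ back to the right by $i-1$ cells, so it suffices to identify the resulting shape.

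The only genuinely new point is this shape identification. After shifting and re-indexing columns so that the leftmost occupied cell sits in column $1$, one finds that row $i$ of the image occupies columns $k-i+1$ through $n+k-i$; in particular every row has exactly $n$ cells and row $i$ begins one cell to the left of row $i-1$. I would then match this against the definitions: writing $\alpha'=\delta_{[n+k-1,n]}=(n,n+1,\ldots,n+k-1)$ and $\beta'=\delta_{k-1}=(1,2,\ldots,k-1)$, the shifted reverse diagram of $\alpha'$ has row $i$ spanning columns $k-i+1$ through $n+k-1$, and truncating by $\beta'$ removes the rightmost $i-1$ cells of row $i$ for $2\le i\le k$, since the relevant row length is $\alpha'_i-\beta'_{i-1}=(n+i-1)-(i-1)=n$. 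This leaves row $i$ on columns $k-i+1$ through $n+k-i$, coinciding cell-for-cell with the image shape. Consequently the shifting map is a bijection from $E_\alpha$ onto the truncated shifted reverse tableaux of shape $\delta_{[n+k-1,n]}\backslash\delta_{k-1}$, whence $\dim\smodule_{\alpha,E_\alpha}=|E_\alpha|$ equals their number.

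I expect the main obstacle to be purely bookkeeping: carefully verifying the index arithmetic so that the staircase-offset rectangle produced by the shift agrees with the truncated shifted reverse diagram, and in particular confirming that $\alpha'=\delta_{[n+k-1,n]}$ and $\beta'=\delta_{k-1}$ meet the admissibility constraints $s\le k$ and $\beta'_{s-i+1}\le\alpha'_{k-i+1}$ demanded by the definition of a truncated shifted reverse diagram (here $s=k-1$, and both reduce to $0\le n$). The tableau-condition half of the argument requires no new ideas, as the translation between the triple rule and strict column increase is already established in the proof of Theorem~\ref{the:dimsmodule}.
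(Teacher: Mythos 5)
Your proposal follows the paper's own route: the paper's entire proof of this proposition is the remark that one should repeat the procedure of Theorem~\ref{the:dimsmodule}, and that is what you do. Your shape bookkeeping is correct --- row $i$ of the shifted image occupies columns $k-i+1$ through $n+k-i$, and since $\delta_{[n+k-1,n]}=(n,n+1,\ldots,n+k-1)$ and $\delta_{k-1}=(1,\ldots,k-1)$ give row lengths $(n+i-1)-(i-1)=n$ with the prescribed staircase offset, this matches the truncated shifted reverse diagram cell for cell. The forward half of the argument is also sound: for $\rtau\in E_\alpha$ the triple rule together with column-increase forces $\rtau(i,j)>\rtau(i-1,j-1)$, so the diagonals of the rectangle become strictly increasing columns after the shift.

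The one genuine gap is the phrase ``manifestly invertible.'' The set-theoretic inverse of the shift is of course obvious; what actually needs proof is that it carries \emph{every} truncated shifted reverse tableau back to an element of $E_\alpha$, i.e.\ that the shifted-back filling satisfies the first-column condition, the triple rule, and column-increase. In the proof of Theorem~\ref{the:dimsmodule} this surjectivity step rests on the fact that in a shifted reverse tableau any cell weakly north-east of a given cell carries a strictly smaller entry, which one verifies by walking a monotone lattice path between the two cells \emph{inside the diagram}. For the truncated shape such paths exist only because consecutive rows overlap in the columns $k-i+2$ through $n+k-i$, a nonempty range precisely when $n\geq 2$. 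This is not cosmetic: for $n=1$ and $k\geq 2$ the shape $\delta_{[k,1]}\backslash\delta_{k-1}=\delta_k\backslash\delta_{k-1}$ degenerates to $k$ single-cell rows lying in pairwise distinct columns, which admit $k!$ fillings, whereas $\lvert E_{(1^k)}\rvert=1$; so the statement itself, and hence any proof of it, fails there. You should replace ``manifestly invertible'' by the explicit overlap-and-path argument (valid for $n\geq 2$) and flag the degenerate case; the paper's own one-line proof has the same omission.
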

 
 \begin{proposition}\label{prop:deltann}
Let $n$ be a positive integer and $\alpha=(1,2,\ldots,n-1,n,n)$. Then, the number of SRCTs that belong to $E_{\alpha}$, that is $\dim \smodule _{\alpha, E_\alpha }$, is equal to the number of truncated shifted reverse tableaux of shape $\delta_{n+1}\backslash \delta_{1}$.
 \end{proposition}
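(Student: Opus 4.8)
The plan is to adapt the bijection from the proof of Theorem~\ref{the:dimsmodule} to the composition $\alpha=(1,2,\ldots,n-1,n,n)$, the only genuinely new ingredient being a careful bookkeeping of how the repeated final part $n,n$ forces the single-cell truncation $\backslash\,\delta_1$. The whole argument rests on the same diagonal inequality used there, so I would begin by recording it. If $\rtau\in E_\alpha$ and both cells $(i,j)$ and $(i-1,j-1)$ lie in the reverse composition diagram of $\alpha$, then $\rtau(i,j)>\rtau(i-1,j-1)$: indeed, since $\rtau\in E_\alpha$ its columns increase from top to bottom, and were $\rtau(i,j)<\rtau(i-1,j-1)$ the triple rule (applied to the rows $i-1<i$ with $k=j-1$) would force $\rtau(i-1,j)$ to exist and satisfy $\rtau(i-1,j)>\rtau(i,j)$, contradicting the increase in column $j$. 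This inequality uses only column-increasingness and the triple rule, so it is available here verbatim.

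Next I would set up the shift map sending $\rtau\in E_\alpha$ to the filling obtained by moving row $i$ leftward by $i-1$ cells, exactly as in Theorem~\ref{the:dimsmodule}. Cells lying on a common diagonal $j-i=\text{const}$ of the diagram become a single column, the diagonal inequality above becomes precisely the top-to-bottom column-increase condition, and the left-to-right decrease within rows is preserved. The crux is then to identify the resulting shape. A direct computation shows that for $1\le i\le n$ the full row $i$ of length $\alpha_i=i$ occupies, after the shift, the $i$ rightmost columns, whereas the final row of length $\alpha_{n+1}=n$ is indented one extra step and so falls one cell short of the rightmost column. Reading this against the truncated diagram of $\delta_{n+1}\backslash\delta_1$---whose rows $1,\ldots,n$ have lengths $1,\ldots,n$ and whose bottom row has length $(n+1)-\beta_1=n$, each row starting one cell left of the one above---shows the two shapes coincide. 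Hence the image of the shift map consists exactly of fillings of shape $\delta_{n+1}\backslash\delta_1$ satisfying the conditions of Definition~\ref{def:shifted_reverse_tableau}, i.e.\ truncated shifted reverse tableaux.

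Finally I would establish bijectivity by inverting the map, shifting row $i$ back to the right by $i-1$ cells. As in Theorem~\ref{the:dimsmodule}, the governing fact is that in a truncated shifted reverse tableau every entry strictly dominates every entry lying weakly to its northeast; this northeast dominance guarantees, upon un-shifting, both that each column of the reverse composition diagram increases from top to bottom (so the filling lands in $E_\alpha$) and that no triple-rule violation is introduced, so the output is a genuine $\SRCT$ of shape $\alpha$. Since the shift and its inverse are mutually inverse on the relevant shapes, this yields the desired equality of cardinalities with $\dim\smodule_{\alpha,E_\alpha}=|E_\alpha|$.

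I expect the main obstacle to be purely the combinatorial shape bookkeeping: verifying that the coincidence $\alpha_n=\alpha_{n+1}=n$ produces precisely the one-cell truncation $\delta_{n+1}\backslash\delta_1$ (rather than a different truncation or a non-truncated shape, as would arise from a strict reverse partition), and confirming that the right boundary of the shifted picture is a legitimate truncated shifted shape so that the inverse construction returns SRCTs of shape $\alpha$ and not of some neighbouring composition. Once the shape identification is pinned down, the verification that the correspondence preserves the row and column conditions is identical to that in Theorem~\ref{the:dimsmodule}, and I would simply invoke that argument rather than repeat it.
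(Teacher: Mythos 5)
Your proposal is correct and takes essentially the same approach as the paper, which proves this proposition simply by invoking the shift bijection from the proof of Theorem~\ref{the:dimsmodule}. The shape bookkeeping you carry out -- checking that the repeated final part $n$ leaves the bottom row one cell short of the rightmost column, which is exactly the truncation $\delta_{n+1}\backslash\delta_1$ -- is the detail the paper leaves implicit, and you have verified it correctly.
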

 
 In \cite[Theorem 4]{panova}, the number of truncated shifted reverse tableaux of shape $\delta_{n+1}\backslash \delta_{1}$ has already been computed in a closed form. This allows us to state the following corollary.
 
 \begin{corollary}\label{cor:deltannclosed}
Given a positive integer $n$, the cardinality of $E_{(1,2,\ldots, n,n)}$ is $g_{n+1}\frac{C_{n+1}C_{n-1}}{2C_{2n-1}}$ where $g_{n+1}=\frac{\binom{n+2}{2} !}{\prod_{0\leq i< j\leq n+1}(i+j)}$ is the number of shifted reverse tableaux of shape $(1,2,\ldots, n+1)$ and $C_{m}=\frac{1}{m+1}\binom{2m}{m}$ is the $m$-th Catalan number. 
 \end{corollary}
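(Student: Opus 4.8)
The plan is to derive this purely from the combinatorial interpretation of $\dim\smodule_{\alpha,E_\alpha}$ already in hand, combined with the enumerative result of Panova. Taking $\alpha=(1,2,\ldots,n-1,n,n)=(1,2,\ldots,n,n)$, Proposition~\ref{prop:deltann} identifies $\lvert E_{(1,2,\ldots,n,n)}\rvert=\dim\smodule_{\alpha,E_\alpha}$ with the number of truncated shifted reverse tableaux of shape $\delta_{n+1}\backslash\delta_{1}$. As a consistency check that the shapes line up, $(1,2,\ldots,n,n)$ has size $\tfrac{n(n+3)}{2}$, while $\delta_{n+1}$ has size $\binom{n+2}{2}$ and $\delta_{1}$ has size $1$, so the truncated diagram has $\binom{n+2}{2}-1=\tfrac{n(n+3)}{2}$ cells, as it must since the correspondence in Proposition~\ref{prop:deltann} is size-preserving.

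With this reduction in place, the second step is simply to quote \cite[Theorem 4]{panova}, where the number of truncated shifted tableaux of shape $\delta_{n+1}\backslash\delta_{1}$ is evaluated in closed form as $g_{n+1}\tfrac{C_{n+1}C_{n-1}}{2C_{2n-1}}$. Substituting the definitions of $g_{n+1}$ and of the Catalan numbers $C_m=\tfrac{1}{m+1}\binom{2m}{m}$ then yields the stated cardinality. The one point that must be checked, rather than the enumeration itself, is that conventions match: our tableaux (Definition~\ref{def:shifted_reverse_tableau}) have rows decreasing from left to right and columns increasing from top to bottom, whereas the shifted tableaux counted in \cite{panova} are of standard (increasing) type. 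These two conventions are equinumerous on a fixed shifted diagram — this is exactly the identification already invoked when $g_{n+1}=\tfrac{\binom{n+2}{2}!}{\prod_{0\leq i<j\leq n+1}(i+j)}$, the shifted hook-length count, is recorded as the number of shifted reverse tableaux of shape $(1,2,\ldots,n+1)$ — and the same identification transports Panova's count of the truncated shape to ours.

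I expect the only real obstacle to be this convention bookkeeping, since all of the genuinely hard work (the evaluation of the truncated count in closed form) is carried out in \cite{panova} and imported wholesale. To guard against transcription errors I would verify the formula on small cases: for $n=1$ the composition is $(1,1)$, whose unique element gives $\lvert E_{(1,1)}\rvert=1=g_{2}\tfrac{C_{2}C_{0}}{2C_{1}}$, and for $n=2$ the composition is $(1,2,2)$, again with a single element, giving $\lvert E_{(1,2,2)}\rvert=1=g_{3}\tfrac{C_{3}C_{1}}{2C_{3}}$.
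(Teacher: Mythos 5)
Your proposal is correct and follows exactly the paper's route: the paper likewise obtains this corollary by combining Proposition~\ref{prop:deltann} with the closed-form evaluation of the number of truncated shifted reverse tableaux of shape $\delta_{n+1}\backslash\delta_{1}$ in \cite[Theorem 4]{panova}. Your added size and small-case checks are sound but not part of the paper's (one-sentence) argument.
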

 
 In return, we can also calculate the number of certain truncated shifted reverse tableaux from the following.
 
  \begin{theorem}\label{the:3k}
 Let $k$ be a positive integer and $\alpha=(3^k)$, that is, $\alpha$ consists of $k$ parts equal to $3$. Then the number of SRCTs that belong to $E_{\alpha}$, that is $\dim \smodule _{\alpha, E_\alpha }$, is equal to $2^{k-1}$.
 \end{theorem}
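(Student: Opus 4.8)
The plan is to count the fillings in $E_{(3^k)}$ directly, by analysing the three column-reading words, rather than passing through the shifting bijection. Write $\alpha=(3^k)$, so that $E_\alpha$ consists of the fillings of the $k\times 3$ rectangle by $1,\dots,3k$ whose rows decrease from left to right (condition~(1) of an $\SRCT$), whose columns increase from top to bottom (the defining property of $E_\alpha$, which subsumes condition~(2)), and which obey the triple rule. The key first step I would take is to observe that, \emph{for fillings with increasing columns}, the triple rule collapses to an anti-diagonal condition. Indeed, for $i<j$ the column-increase gives $\rtau(i,c+1)<\rtau(j,c+1)$, so the conclusion of the triple rule can never hold; hence its hypothesis must always fail, forcing
\[\rtau(i,c)<\rtau(j,c+1)\qquad\text{for all } i<j \text{ and } c\in\{1,2\}.\]
Conversely, these inequalities together with row-decrease and column-increase are exactly membership in $E_{(3^k)}$, so it suffices to count fillings satisfying them.

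Next I would encode a filling by its three strictly increasing column words $A_1<\cdots<A_k$, $B_1<\cdots<B_k$, $C_1<\cdots<C_k$, recording columns $1,2,3$ respectively; these partition $\{1,\dots,3k\}$ into three $k$-sets. Translating the conditions: row-decrease gives $C_i<B_i<A_i$, while the anti-diagonal condition with $c=1$ and $c=2$ gives $A_i<B_{i+1}$ and $B_i<C_{i+1}$ for $1\le i\le k-1$. I would check that these inequalities are equivalent to the three defining conditions, so that counting $E_{(3^k)}$ amounts to counting the linear extensions of the resulting poset on $\{1,\dots,3k\}$.

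The heart of the argument is then to show this poset has very few linear extensions. From $B_i<A_i<B_{i+1}$ and $B_i<C_{i+1}<B_{i+1}$ (the second using $C_{i+1}<B_{i+1}$, i.e.\ row-decrease in row $i+1$), I would prove that the open interval $(B_i,B_{i+1})$ contains \emph{exactly} the two values $A_i$ and $C_{i+1}$; similarly everything below $B_1$ is just $C_1$ and everything above $B_k$ is just $A_k$, which accounts for all $1+2(k-1)+k+1=3k$ values. Consequently the global order of the values is forced to be
\[C_1,\ B_1,\ \{A_1,C_2\},\ B_2,\ \{A_2,C_3\},\ \dots,\ B_k,\ A_k,\]
where within each of the $k-1$ bracketed pairs $\{A_i,C_{i+1}\}$ the relative order is free and the choices are mutually independent. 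Assigning $1,\dots,3k$ along this order, each of the $2^{k-1}$ choices yields a distinct filling satisfying all the inequalities, and every element of $E_{(3^k)}$ arises this way; hence $\dim\smodule_{\alpha,E_\alpha}=|E_{(3^k)}|=2^{k-1}$.

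The main obstacle is the middle step: verifying cleanly that, once columns increase, the triple rule is \emph{equivalent} to the anti-diagonal inequalities, and that these in turn pin down each gap $(B_i,B_{i+1})$ to the pair $\{A_i,C_{i+1}\}$ with no residual relation between $A_i$ and $C_{i+1}$. After that the count is immediate. As a sanity check one verifies $k=1$ gives $1$ tableau and $k=2$ gives the two fillings with first rows $3,2,1$ and $4,2,1$. (Alternatively one could invoke Proposition~\ref{prop:rectangle} and count truncated shifted reverse tableaux of shape $\delta_{[k+2,3]}\backslash\delta_{k-1}$ directly, but the column-word analysis above seems more transparent and self-contained.)
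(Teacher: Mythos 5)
Your proof is correct, but it takes a genuinely different route from the paper. The paper's argument stays inside the $0$-Hecke machinery of Sections 5--6: it notes that $E_{\alpha}$ is cyclically generated by the source tableau $\rtau_{\alpha}$ (Corollary~\ref{cor:equivcyclic}), identifies the sink tableau as $\pi_{\sigma}(\rtau_{\alpha})$ for $\sigma=s_{3k-3}s_{3k-6}\cdots s_{3}$, and then observes that the operators $\{\pi_{3r}\}_{r=1}^{k-1}$ pairwise commute, so that the elements of $E_{\alpha}$ are exactly the tableaux $\pi_{\omega_X}(\rtau_{\alpha})$ for the $2^{k-1}$ subsets $X\subseteq\{3,6,\ldots,3k-3\}$; in effect the interval $[\col_{\rtau_{0,E}},\col_{\rtau_{1,E}}]$ of Theorem~\ref{the:bruhatordersubinterval} is a Boolean lattice. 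You instead count directly: you reduce the triple rule, in the presence of increasing columns, to the anti-diagonal inequalities $\rtau(i,c)<\rtau(j,c+1)$, encode a filling by its column words $A,B,C$, and show the constraints $C_i<B_i<A_i$, $A_i<B_{i+1}$, $B_i<C_{i+1}$ force $C_1=1$, $B_i=3i-1$, $A_k=3k$ and leave exactly the $k-1$ independent binary choices $\{A_i,C_{i+1}\}=\{3i,3i+1\}$. I checked the two steps you flag as the main obstacles and both go through: the triple rule's conclusion $\rtau(i,c+1)>\rtau(j,c+1)$ contradicts column-increase, so its hypothesis must fail; and the interval $(B_i,B_{i+1})$ contains no $B_j$, no $A_j$ with $j\neq i$ (since $A_j<B_{j+1}\leq B_i$ for $j<i$ and $A_j>B_j\geq B_{i+1}$ for $j>i$), and no $C_j$ with $j\neq i+1$, by symmetric reasoning. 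The two proofs expose the same $k-1$ degrees of freedom --- your swap of $A_i$ with $C_{i+1}$ is precisely the paper's application of $\pi_{3i}$ to $\rtau_{\alpha}$ --- but yours is self-contained and exhibits the tableaux explicitly, at the cost of some case analysis, while the paper's is shorter given the source/sink machinery and makes the lattice structure of $(E_{\alpha},\po)$ transparent.
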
 
 
 \begin{proof}
Note $E_{\alpha}$ is cyclically generated by $\rtau_{\alpha}$ as it is the source tableau in $E_{\alpha}$ by Corollary~\ref{cor:equivcyclic}. Let $\sigma \in \sgrp_{3k}$ be defined as $\sigma = s_{3k-3}s_{3k-6}\cdots s_{3}$. Also, note that $\rtau_1=\pi_\sigma(\rtau_{\alpha})$ is the unique sink tableau in $E_{\alpha}$. To see this note that $\rtau_1$ is as shown below.

$$\begin{array}{c}
\tiny{\tableau{
4&2&1\\7&5&3\\10&8&6}}\\
\vdots
\\
\tiny{\tableau{3k{-}2 & 3k{-}4 & 3k{-}6\\3k & 3k{-}1 & 3k{-}3}}\end{array}$$

 Thus, all integers $i\equiv 1,2 \pmod{3}$ where $2\leq i\leq 3k-2$ are attacking descents and there are no other descents. This in turn implies that $\rtau_1$ is a sink tableau by definition.
 
 Now for any subset $X=\{i_1<i_2<\cdots<i_j\}$ of $\{3,6,\ldots,3k-3\}$, we can associate a permutation $\omega_{X}=s_{i_j}\cdots s_{i_1}$. Since the reduced word expression for $\omega_X$ is a suffix of some reduced word for $\sigma$, we know that $\pi_{\omega_{X}}(\rtau_{\alpha})$ is an SRCT that belongs to $E_{\alpha}$. In fact, it is not hard to see that all elements of $E_{\alpha}$ are obtained by picking such a subset $X$, since the set of operators $\{\pi_{3r}\}_{r=1}^{k-1}$ is a pairwise commuting set of operators. Thus, $\vert E_{\alpha}\vert= 2^{k-1}$.
  \end{proof}

Hence by this theorem and Proposition~\ref{prop:rectangle} we immediately obtain the following enumerative result.
  
  \begin{corollary}\label{cor:3k}
 Given a positive integer $k$, the number of truncated shifted reverse tableaux of shape $\delta_{[k+2,3]}\backslash \delta_{k-1}$ equals $2^{k-1}$.
  \end{corollary}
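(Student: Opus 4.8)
The plan is to obtain this corollary as an immediate consequence of Proposition~\ref{prop:rectangle} and Theorem~\ref{the:3k}, with no genuinely new argument required beyond specializing a single parameter. First I would invoke Proposition~\ref{prop:rectangle} in the case $n=3$. That proposition asserts, for $\alpha = (n^k)$, that $\dim \smodule_{\alpha, E_\alpha}$ equals the number of truncated shifted reverse tableaux of shape $\delta_{[n+k-1,n]}\backslash \delta_{k-1}$. Setting $n=3$ turns this shape into $\delta_{[3+k-1,3]}\backslash \delta_{k-1} = \delta_{[k+2,3]}\backslash \delta_{k-1}$, so that the number of truncated shifted reverse tableaux of shape $\delta_{[k+2,3]}\backslash \delta_{k-1}$ equals $\dim \smodule_{(3^k), E_{(3^k)}}$.

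Next I would apply Theorem~\ref{the:3k}, which states precisely that for $\alpha = (3^k)$ one has $\dim \smodule_{(3^k), E_{(3^k)}} = 2^{k-1}$. Chaining the two equalities then gives that the number of truncated shifted reverse tableaux of shape $\delta_{[k+2,3]}\backslash \delta_{k-1}$ equals $2^{k-1}$, which is exactly the assertion of the corollary.

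The only point requiring any care—and it is bookkeeping rather than an obstacle—is confirming that the shape produced by substituting $n=3$ into Proposition~\ref{prop:rectangle} is literally the shape $\delta_{[k+2,3]}\backslash \delta_{k-1}$ named in the statement. This amounts to checking that the upper index $n+k-1$ becomes $k+2$ and that the lower index of the outer strict reverse partition equals $n=3$, both of which hold verbatim when $n=3$. With this identification made, the two cited results compose directly and the corollary follows, so I would keep the written proof to a single sentence invoking Theorem~\ref{the:3k} and Proposition~\ref{prop:rectangle}.
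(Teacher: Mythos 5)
Your proposal is correct and is exactly the paper's argument: the paper derives the corollary immediately from Theorem~\ref{the:3k} together with Proposition~\ref{prop:rectangle} specialized to $n=3$, where $\delta_{[n+k-1,n]}\backslash\delta_{k-1}$ becomes $\delta_{[k+2,3]}\backslash\delta_{k-1}$. The parameter check you flag is the only content of the deduction, and it holds as you describe.
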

  
\section{Restriction rules and skew quasisymmetric Schur functions}\label{sec:restriction}
We now turn our attention to skew quasisymmetric Schur functions and discover that they, too, arise as quasisymmetric characteristics.
To begin we need some definitions to lead us to the definition of skew quasisymmetric Schur functions that first arose in \cite{BLvW}. 

\begin{definition}\label{def:Lc}The \emph{reverse composition poset} $(\Lc, \lessc) $ is the poset consisting of the set of all compositions $\Lc$ in which the composition
 $\alpha=(\alpha_1,\ldots,\alpha_\ell)$ is covered by
\begin{enumerate}
\item $(1, \alpha_1,\ldots,\alpha_\ell)$, that is, the composition obtained by \emph{prefixing} a part of size 1 to $\alpha$.
\item $(\alpha_1,\ldots,\alpha_k+1,\dots,\alpha_\ell)$, provided that $\alpha_i\neq\alpha_k$ for all $i<k$, that is, the composition obtained by adding 1 to a part of $\alpha$ as long as that part is the \emph{leftmost} part of that size.
\end{enumerate}
\end{definition}

\begin{example}\label{ex:Lcchain}
$$(1)\coverc (2)\coverc (1,2)\coverc (1,1,2) \coverc (1,1,3)\coverc (2,1,3)
$$\end{example}

Let $\alpha, \beta$ be two reverse composition diagrams such that $\beta \lessc \alpha$. Then we define the \emph{skew reverse composition shape} $\alpha \cskew \beta$ to be the array of cells of $\alpha$
$$\alpha \cskew \beta =\{  (i,j) \in \alpha \suchthat (i,j) \mbox{ is not in the subdiagram }  \beta\}$$where $\beta$ has been drawn in the bottom left corner of $\alpha$, due to $\lessc$. We refer to $\beta$ as the \emph{inner shape} and to $\alpha$ as the \emph{outer shape}. The \emph{size} of $\alpha \cskew \beta$ is $| \alpha \cskew \beta| = |\alpha | - |\beta|$. Note that $\alpha \cskew \varnothing$ is simply the reverse composition diagram $\alpha$. Hence, we write $\alpha$ instead of $\alpha \cskew \varnothing$ and say it is of \emph{straight shape}.

\begin{example}\label{ex:revcompskewshape}In this example the {inner} shape is denoted by cells filled with a $\bullet$.
$$\begin{array}{c}
  \tableau{ \ &\ &\  \\
  \bullet & \bullet & \ & \ \\
  \bullet & \ \\
  \bullet & \bullet & \bullet}\\
 \\
\alpha \cskew \beta = (3,4,2,3)\cskew (2,1,3)\end{array}$$
\end{example}

\begin{definition}\cite[Definition 2.9]{BLvW}\label{def:skewSRCT} Given a skew reverse composition shape $\alpha \cskew \beta$ of size $n$, we define a \emph{skew standard reverse composition tableau} (abbreviated to \emph{skew SRCT}) $\rtau$ of \emph{shape} $\alpha \cskew \beta$ and \emph{size} $n$ to be a bijective filling 
$$\rtau:  \alpha \cskew \beta  \rightarrow \{1, \ldots , n\}$$of the cells $(i,j)$ of the skew reverse composition shape $\alpha \cskew \beta$ such that
\begin{enumerate}
\item the entries in each row are decreasing when read from left to right
\item the entries in the first column are increasing when read from top to bottom
\item \emph{triple rule:} set $\rtau(i,j) = \infty$ for all $(i,j)\in \beta$. If $i<j$ and $(j, k+1)\in \alpha \cskew \beta $ and $\rtau (i,k)> \rtau (j, k+1)$, then $\rtau (i,k+1)$ exists and $\rtau (i,k+1)> \rtau (j, k+1)$.
\end{enumerate}
We denote the set of all skew SRCTs of shape $\alpha \cskew \beta$ by $\SRCT (\alpha \cskew \beta).$
\end{definition}

The \emph{descent set} of a skew SRCT $\rtau$ of size $n$, denoted by $\des(\rtau)$, is $$\des(\rtau) = \{ i \suchthat i+1 \mbox{ appears weakly right of } i\}\subseteq [n-1]$$and the corresponding \emph{descent composition} of $\rtau$ is $\comp(\rtau)=\comp(\des(\rtau)).$

\begin{example}\label{ex:skewSRCT}
We have in the skew SRCT below that $\des (\rtau ) = \{1,3,4 \}$ and $\comp (\rtau ) = (1,2,1,2)$.
$$\rtau = \tableau{
6&4&1\\\bullet&\bullet&5&2\\\bullet&3\\\bullet&\bullet&\bullet
}$$
\end{example}

\begin{definition}\label{def:skewSquasisymmetric}
Let $\alpha\cskew \beta $ be a skew reverse composition shape. Then the
\emph{skew quasisymmetric Schur function} $\qs _{\alpha \cskew \beta}$ is defined by \[\qs_{\alpha \cskew \beta}=\sum _{\rtau \in \SRCT(\alpha \cskew \beta)} F _{\comp (\rtau)}.\]
 \end{definition}

Now we can turn our attention to developing restriction rules. Observe that given compositions $\alpha, \beta$ such that $\beta \lessc \alpha$ with $| \alpha \cskew \beta | = m$ and a skew SRCT $\rtau \in \SRCT(\alpha \cskew \beta)$ the definition of attacking is still valid. Thus, on $\rtau \in \SRCT(\alpha \cskew \beta)$ we can define operators $\pi _i$ for $1\leq i \leq m-1$ as in \eqref{eq:pi}. The operators satisfy Theorem~\ref{the:0heckerels} using the same proof techniques as in Section~\ref{sec:0Heckeaction}. {}From here, we can define the \emph{column word} $\col _{\rtau}$ to be the entries in each column read from top to bottom, where the columns are processed from left to right. As in the proof of Lemma~\ref{lem:precimpliesbruhat}, if $i\in \des (\rtau )$ and is non-attacking, then by definition $i$ occurs to the left of $i+1$ in $\col _{\rtau }$ and hence $s_i \col _{\rtau}$ contains one more inversion than $\col _\rtau$ and so as in Section~\ref{sec:partialorder} we obtain a partial order as follows.

\begin{definition}\label{def:skewpartialorder} Let $\alpha, \beta$ be compositions such that $\beta \lessc \alpha$ with $| \alpha \cskew \beta | = m$. Let $\rtau _1, \rtau _2 \in \SRCT (\alpha \cskew \beta)$. Define a partial order $\preccurlyeq _{\alpha \cskew \beta}$ on $\SRCT(\alpha \cskew \beta)$ by $\rtau _1 \preccurlyeq _{\alpha \cskew \beta} \rtau _2$ if and only if there exists a permutation $\sigma \in \sgrp _m$ such that $\pi _\sigma (\rtau _1) = \rtau _2$.
\end{definition}

We will now extend the above partial order $\preccurlyeq _{\alpha \cskew \beta}$ to a total order $\preccurlyeq _{\alpha \cskew \beta} ^t$ with minimal element $\rtau _1$ and define for a given $\rtau \in \SRCT (\alpha\cskew\beta)$
$$\mathcal{V}_{\rtau} = \spam \{ \rtau _j \in \SRCT (\alpha \cskew \beta) \suchthat \rtau  \preccurlyeq _{\alpha \cskew \beta} ^t \rtau _j\}.$$Following the methods of Section~\ref{sec:0Hecke}, we obtain that $\mathcal{V}_{\rtau}$ is an $H_m(0)$-module, and moreover, the following.

\begin{theorem}\label{the:skewbigone} Let $\alpha, \beta$ be compositions such that $\beta \lessc \alpha$ with $| \alpha \cskew \beta | = m$, and let $\rtau _1 \in \SRCT (\alpha \cskew \beta)$ be the minimal element under the total order $\preccurlyeq _{\alpha \cskew \beta} ^t$. Then $\mathcal{V}_{\rtau_1}$ is an $H_m(0)$-module whose quasisymmetric characteristic is the skew quasisymmetric Schur function $\qs_{\alpha \cskew \beta}$.
\end{theorem}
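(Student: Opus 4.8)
The plan is to transcribe, essentially verbatim, the filtration argument used to prove Theorem~\ref{the:bigone} in Section~\ref{sec:0Hecke}, since the preceding paragraph has already ported all the required infrastructure to the skew setting: the operators $\pi_i$ for $1\leq i\leq m-1$ satisfy the $0$-Hecke relations of $H_m(0)$ by the skew analog of Theorem~\ref{the:0heckerels}, the column word $\col_{\rtau}$ is defined, and a non-attacking descent $i$ sends $\col_{\rtau}$ to $s_i\col_{\rtau}$ with exactly one more inversion. First I would record that this last fact gives the skew analog of Lemma~\ref{lem:precimpliesbruhat}: whenever $\pi_i(\rtau_1)=\rtau_2$ is a genuine flip at a non-attacking descent, one has $\col_{\rtau_1}\leq_L\col_{\rtau_2}$ with strictly greater length. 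This guarantees antisymmetry of $\preccurlyeq_{\alpha\cskew\beta}$, so Definition~\ref{def:skewpartialorder} really does define a partial order and a nontrivial sequence of flips can never be reversed.

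Next I would verify that the total order $\preccurlyeq_{\alpha\cskew\beta}^t$ refining $\preccurlyeq_{\alpha\cskew\beta}$ is compatible with the action, namely that each $\mathcal{V}_{\rtau}$ is an $H_m(0)$-submodule. This is the skew analog of Lemma~\ref{lem:hnmodule}: for any generator $\pi_j$ and any $\rtau\in\SRCT(\alpha\cskew\beta)$, the image $\pi_j(\rtau)$ is either $0$, or $\rtau$ itself, or the strictly $\preccurlyeq_{\alpha\cskew\beta}$-larger tableau $s_j(\rtau)$ produced at a non-attacking descent, so in all three cases it lies in $\spam\{\rtau_k\suchthat \rtau\preccurlyeq_{\alpha\cskew\beta}^t\rtau_k\}$. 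Listing the tableaux in increasing $\preccurlyeq_{\alpha\cskew\beta}^t$-order as $\rtau_1,\ldots,\rtau_M$ and setting $\mathcal{V}_{\rtau_{M+1}}=0$, this yields the filtration
\[
\mathcal{V}_{\rtau_{M+1}}\subset\mathcal{V}_{\rtau_M}\subset\cdots\subset\mathcal{V}_{\rtau_1}
\]
by $H_m(0)$-modules, with one-dimensional successive quotients.

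The heart of the computation is then to identify each quotient $\mathcal{V}_{\rtau_{i-1}}/\mathcal{V}_{\rtau_i}$, which is spanned by the image of $\rtau_{i-1}$. On this quotient $\pi_j$ acts as the identity when $j\notin\des(\rtau_{i-1})$ and as $0$ when $j\in\des(\rtau_{i-1})$: in the attacking case $\pi_j(\rtau_{i-1})=0$ outright, while in the non-attacking case $\pi_j(\rtau_{i-1})=s_j(\rtau_{i-1})\in\mathcal{V}_{\rtau_i}$ by the previous step, hence vanishes in the quotient. Comparing with Subsection~\ref{subsec:reps}, this is precisely the irreducible module $\mathbf{F}_{\comp(\rtau_{i-1})}$, so $ch([\mathcal{V}_{\rtau_{i-1}}/\mathcal{V}_{\rtau_i}])=F_{\comp(\rtau_{i-1})}$. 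Summing over the filtration and using that $ch$ is additive on short exact sequences gives
\[
ch([\mathcal{V}_{\rtau_1}])=\sum_{\rtau\in\SRCT(\alpha\cskew\beta)}F_{\comp(\rtau)}=\qs_{\alpha\cskew\beta},
\]
the last equality being Definition~\ref{def:skewSquasisymmetric}.

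The only genuinely new input, and the step I would be most careful about, is the claim that the $\pi_i$ satisfy the braid and commutation relations on $\SRCT(\alpha\cskew\beta)$. In the straight case these were established via the growth-word and box-adding machinery of Lemmas~\ref{lem:switchSRCT}, \ref{lem:fullcommutativity} and~\ref{lem:commutativityconsecutive}; for skew shapes one must either supply a skew analog of Lemma~\ref{lem:switchSRCT} (that a non-attacking swap again produces a valid skew SRCT) or re-run the case analysis of Lemma~\ref{lem:piiiplus1} directly, checking that the triple rule with the $\infty$-entries on $\beta$ is preserved under the relevant swaps. Everything downstream --- the partial order, the filtration, and the characteristic computation --- is then a formal repetition of Section~\ref{sec:0Hecke}, and presents no further difficulty.
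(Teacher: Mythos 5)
Your proposal is correct and follows essentially the same route as the paper, which likewise proves Theorem~\ref{the:skewbigone} by asserting that the operators satisfy the $0$-Hecke relations ``using the same proof techniques as in Section~\ref{sec:0Heckeaction}'' and then repeating the filtration argument of Section~\ref{sec:0Hecke} verbatim. If anything, you are more explicit than the paper in flagging that the only genuinely new verification is the skew analogue of Lemma~\ref{lem:switchSRCT} (preservation of the triple rule, with $\infty$-entries on the inner shape, under non-attacking swaps), which is exactly the right place to be careful.
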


For convenience we will denote the above $H_m(0)$-module $\mathcal{V}_{\rtau_1}$ by $\smodule_{\alpha \cskew \beta}$, and summarize Theorem~\ref{the:skewbigone} by
\begin{equation}\label{eq:skewqsmodule}ch([\smodule_{\alpha \cskew \beta}])= \qs _{\alpha \cskew \beta}.\end{equation}

For $0\leq m\leq n$, by abuse of notation, let $H_{m,n-m}(0)$ denote the subalgebra of $H_{n}(0)$ generated by
\begin{eqnarray*}
\{\pi_1,\ldots,\pi_{m-1},\pi_{m+1},\ldots,\pi_{n-1}\}.
\end{eqnarray*} 
 This subalgebra is actually isomorphic to $H_{m}(0)\otimes H_{n-m}(0)$. The isomorphism is obtained by mapping the generator $\pi_i$ of $H_{m,n-m}(0)$ where $1\leq i\leq n-1$ and $i\neq m$ as follows.
 \begin{eqnarray}
 \pi_i \longmapsto \left\lbrace\begin{array}{ll}\pi_i\otimes1 & 1\leq i<m\\1\otimes \pi_{i-m}& i>m\end{array}\right. 
 \end{eqnarray}
 Here, $1$ denotes the unit of the 0-Hecke algebra.
 
 Let $\alpha\vDash n$. Let $\smodule_{\alpha}\downarrow^{H_{n}(0)}_{H_{m,n-m}(0)}$ denote $\smodule_{\alpha}$ viewed as an $H_{m,n-m}(0)$-module. The isomorphism from before allows us to think of $\smodule_{\alpha}$ as an $H_{m}(0)\otimes H_{n-m}(0)$-module. Given $\rtau\in \SRCT(\alpha)$, let $\rtau_{\leq m}$ denote the skew SRCT comprising of all cells whose entries are $\leq m$. Furthermore, let $\rtau_{>m}$ denote the SRCT of straight shape comprising of all cells whose entries are $>m$ that have had $m$ subtracted from each entry.
Now, consider the following subsets of $\SRCT(\alpha)$ indexed by compositions of $n-m$ 
\begin{eqnarray*}
X_{\beta}=\{\rtau\in \SRCT(\alpha)\suchthat \text{ the shape of } \rtau_{\leq m} \text{ is } \alpha\cskew\beta\}.
\end{eqnarray*}

Then we have that
\begin{eqnarray}\label{eq:X}
\SRCT(\alpha)=\bigsqcup_{\substack{\beta\vDash n-m\\\beta <_{\check{c}}\alpha}}X_{\beta}
\end{eqnarray} 
where $\bigsqcup$ denotes disjoint union.

For any $\beta\cskew\alpha$ satisfying $\beta\vDash n-m$, let $\smodule_{\alpha,X_{\beta}}$ denote the $\mathbb{C}$-linear span of all tableaux in $X_{\beta}$. Then, we can give $\mathbf{S}_{\alpha,X_{\beta}}$ the structure of an $H_{m}(0)\otimes H_{n-m}(0)$-module by defining an action of the generators as follows.
\begin{eqnarray*}
&1\otimes \pi_{i}(\rtau)&=\pi_{i+m}(\rtau) \text{ where } 1\leq i\leq n-m-1\\
& \pi_{i}\otimes 1(\rtau)&=\pi_{i}(\rtau) \text{ where } 1\leq i\leq m-1
\end{eqnarray*}
 It is easily seen that with the action defined above, $\mathbf{S}_{\alpha,X_{\beta}}$ is an $H_{m}(0)\otimes H_{n-m}(0)$-module.
 
 \begin{proposition}\label{prop:skewiso}
 Let $\alpha \vDash n, \beta\vDash n-m$ such that $\beta\lessc\alpha$. Then the following is an isomorphism of $H_{m}(0)\otimes H_{n-m}(0)$-modules.
 $$
  \smodule_{\alpha,X_{\beta}}\cong \smodule_{\alpha\cskew\beta}\otimes \smodule_{\beta}
$$
 \end{proposition}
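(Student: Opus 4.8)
The plan is to build an explicit linear map $\theta\colon \smodule_{\alpha,X_\beta}\to\smodule_{\alpha\cskew\beta}\otimes\smodule_\beta$ from the tableau bases and then verify separately that it is a bijection and that it intertwines the two actions; recall that as vector spaces $\smodule_{\alpha\cskew\beta}$ and $\smodule_\beta$ are spanned by all SRCTs of shapes $\alpha\cskew\beta$ and $\beta$ respectively. On a basis element $\rtau\in X_\beta$ I would set $\theta(\rtau)=\rtau_{\leq m}\otimes\rtau_{>m}$ and extend linearly. First I would check well-definedness, namely that $\rtau_{\leq m}\in\SRCT(\alpha\cskew\beta)$ and $\rtau_{>m}\in\SRCT(\beta)$: the shape of $\rtau_{\leq m}$ being $\alpha\cskew\beta$ is exactly the condition defining $X_\beta$, and since $\beta$ sits in the bottom-left corner of $\alpha$ and is left-justified, both fillings inherit the row-decrease and first-column-increase conditions from $\rtau$.

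To prove $\theta$ is a bijection I would exhibit its inverse as a gluing map: given $(S,T)\in\SRCT(\alpha\cskew\beta)\times\SRCT(\beta)$, place the entries of $S$ (which use $1,\dots,m$) in the outer cells and the entries of $T$ increased by $m$ in the inner cells. The row-decrease and first-column conditions across the inner/outer boundary are immediate because every inner entry exceeds $m$ while every outer entry is at most $m$. I expect the triple rule to be the main obstacle, and here the $\infty$-convention built into the skew triple rule of Definition~\ref{def:skewSRCT} is precisely what makes the argument go through. One splits into cases according to where the two relevant cells lie: if both are inner the triple rule for $T$ applies, if both are outer the skew triple rule for $S$ applies with finite values, and in the mixed case where the upper cell is inner one reads the skew triple rule for $S$ with that cell carrying the value $\infty$, using that a $>m$ entry dominates every $\leq m$ entry. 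Running the same case analysis in reverse shows that the splitting $\rtau\mapsto(\rtau_{\leq m},\rtau_{>m})$ is well defined, so $\theta$ is a bijection.

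Finally I would verify that $\theta$ is a morphism of $H_m(0)\otimes H_{n-m}(0)$-modules, where the exclusion of $\pi_m$ from $H_{m,n-m}(0)$ is the crucial point. The argument rests on locality of descents and attacking pairs. For $1\leq i\leq m-1$ both $i$ and $i+1$ occupy cells of $\rtau_{\leq m}$ and keep their columns in passing from $\rtau$ to $\rtau_{\leq m}$, so $i\in\des(\rtau)\Leftrightarrow i\in\des(\rtau_{\leq m})$ and the attacking status agrees; hence $\pi_i$ alters only the first tensor factor and $\theta(\pi_i\rtau)=(\pi_i\otimes 1)\theta(\rtau)$, with the case $\pi_i\rtau=0$ matching $\pi_i\rtau_{\leq m}=0$. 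Symmetrically, for $1\leq i\leq n-m-1$ the generator $1\otimes\pi_i$ acts as $\pi_{i+m}$, which moves $i+m$ and $i+m+1$ within $\rtau_{>m}$, giving $\theta(\pi_{i+m}\rtau)=(1\otimes\pi_i)\theta(\rtau)$. Because $\pi_m$ is never applied, no operator ever interchanges the entries $m$ and $m+1$, so the splitting of entries into those $\leq m$ and those $>m$---and with it the shape $\alpha\cskew\beta$ of $\rtau_{\leq m}$---is preserved under the whole action, keeping every image inside $X_\beta$ and completing the proof that $\theta$ is the desired isomorphism.
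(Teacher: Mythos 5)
Your proposal is correct and follows essentially the same route as the paper: the paper defines the identical map $\theta(\rtau)=\rtau_{\leq m}\otimes\rtau_{>m}$, notes it is a vector space isomorphism, and verifies the intertwining relations $(1\otimes\pi_i)\theta(\rtau)=\theta(\pi_{i+m}(\rtau))$ and $(\pi_i\otimes 1)\theta(\rtau)=\theta(\pi_i(\rtau))$ exactly as you do. The only difference is that you spell out the inverse gluing map and the triple-rule and locality checks that the paper leaves implicit, which is a faithful elaboration rather than a different argument.
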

 
 \begin{proof}
 Consider the map $\theta:  \smodule_{\alpha,X_{\beta}}\longrightarrow \smodule_{\alpha\cskew\beta}\otimes \smodule_{\beta}$ given by
 \begin{eqnarray*}
 \rtau \longmapsto \rtau_{\leq m}\otimes \rtau_{> m}.
 \end{eqnarray*}
The map is well defined as all $\rtau\in \mathbf{S}_{\alpha,X_{\beta}}$ satisfy the condition that the shape of $\rtau_{\leq m}$ is $\alpha\cskew\beta$, and it is an isomorphism of vector spaces. We will show that 
\begin{eqnarray*}
&(1\otimes \pi_i)(\theta(\rtau))&=\theta(1\otimes\pi_i(\rtau)) \text{ when } 1\leq i\leq n-m-1, \\
&(\pi_i\otimes 1)(\theta(\rtau))&=\theta(\pi_i\otimes 1(\rtau)) \text{ when } 1\leq i\leq m-1.
\end{eqnarray*} 

Let $1\leq i\leq n-m-1$. We have
\begin{eqnarray*}
(1\otimes \pi_i)(\theta(\rtau))&=&1\otimes \pi_i(\rtau_{\leq m}\otimes \rtau_{>m})\nonumber\\&=&\rtau_{\leq m}\otimes \pi_i(\rtau_{>m})\nonumber\\&=&\theta(\pi_{i+m}(\rtau))\nonumber\\&=&\theta(1\otimes \pi_i(\rtau)).
\end{eqnarray*}
The verification that $ (\pi_i\otimes 1)(\theta(\rtau))=\theta(\pi_i\otimes 1(\rtau))$ when $ 1\leq i\leq m-1$ is very similar. 
Thus, we get that the map $\theta$ gives the desired isomorphism of $H_m(0)\otimes H_{n-m}(0)$-modules.
 \end{proof}
 Using the above proposition we can state the following restriction rule, which reflects the coproduct formula for quasisymmetric Schur functions \cite[Theorem 3.5]{BLvW}, which in turn reflects the classical coproduct formula for Schur functions.
 
 \begin{theorem}\label{the:restriction_rule}(Restriction rule)
Let $\alpha \vDash n$. Then the following is an isomorphism of $H_{m}(0)\otimes H_{n-m}(0)$-modules.
 \begin{eqnarray*}
 \smodule_{\alpha}\downarrow^{H_{n}(0)}_{H_{m,n-m}(0)} \cong \bigoplus_{\substack{\beta\vDash n-m\\\beta\lessc\alpha}}\smodule_{\alpha\cskew\beta}\otimes \smodule_{\beta}
 \end{eqnarray*}
 \end{theorem}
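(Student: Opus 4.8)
My plan is to deduce the theorem by combining the vector-space decomposition \eqref{eq:X} with the already-established Proposition~\ref{prop:skewiso}, so that almost all of the work is inherited and only a stability check is genuinely new. First I would recall that, as a vector space, $\smodule_{\alpha} = \bigoplus_{\beta} \smodule_{\alpha, X_\beta}$, where the sum ranges over $\beta \vDash n-m$ with $\beta \lessc \alpha$ and $\smodule_{\alpha, X_\beta}$ is the span of those $\rtau \in \SRCT(\alpha)$ for which $\rtau_{\leq m}$ has shape $\alpha \cskew \beta$. Given this, it suffices to prove two things: that each $\smodule_{\alpha, X_\beta}$ is an $H_{m,n-m}(0)$-submodule of $\smodule_{\alpha}\downarrow^{H_n(0)}_{H_{m,n-m}(0)}$, and that, under the identification $H_{m,n-m}(0) \cong H_m(0) \otimes H_{n-m}(0)$, the structure it inherits is exactly the $H_m(0) \otimes H_{n-m}(0)$-structure placed on $\smodule_{\alpha, X_\beta}$ just before Proposition~\ref{prop:skewiso}. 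Granting both, I would apply Proposition~\ref{prop:skewiso} to rewrite each summand as $\smodule_{\alpha \cskew \beta} \otimes \smodule_\beta$ and sum over $\beta$ to obtain the claimed isomorphism.

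The key step is the stability of $X_\beta$ under the restricted generators. The subalgebra $H_{m,n-m}(0)$ is generated by the $\pi_i$ with $i \neq m$, and for such $i$ the operator $\pi_i$ either fixes $\rtau$, sends it to $0$, or interchanges the entries $i$ and $i+1$. When $1 \leq i < m$ both $i$ and $i+1$ are $\leq m$, and when $m < i \leq n-1$ both are $> m$; in either case the set of cells carrying entries $\leq m$ is left unchanged, so the shape of $\rtau_{\leq m}$ is unaffected. Hence whenever $\pi_i(\rtau)$ is nonzero it again lies in $X_\beta$, giving $\pi_i(\smodule_{\alpha, X_\beta}) \subseteq \smodule_{\alpha, X_\beta}$ for every $i \neq m$ and establishing that each $\smodule_{\alpha, X_\beta}$ is a submodule of the restriction.

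Matching the two module structures I expect to be purely definitional: under $\pi_i \mapsto \pi_i \otimes 1$ for $1 \leq i < m$ and $\pi_i \mapsto 1 \otimes \pi_{i-m}$ for $m < i \leq n-1$, the restricted action of $H_{m,n-m}(0)$ on $\smodule_{\alpha, X_\beta}$ sends $\rtau$ to $\pi_i(\rtau)$, which is precisely how the $H_m(0)\otimes H_{n-m}(0)$-action was prescribed before Proposition~\ref{prop:skewiso} (namely $\pi_i \otimes 1$ acting as $\pi_i$ and $1 \otimes \pi_i$ acting as $\pi_{i+m}$). Thus the two structures coincide on the nose, and Proposition~\ref{prop:skewiso} finishes each summand. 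The main obstacle is therefore not in this assembly but has been pushed into Proposition~\ref{prop:skewiso}, where the map $\theta\colon \rtau \mapsto \rtau_{\leq m}\otimes \rtau_{>m}$ was shown to intertwine the actions; the only delicate point remaining here is to confirm that the index set of $\beta$'s in \eqref{eq:X} is exactly $\{\beta \vDash n-m : \beta \lessc \alpha\}$, so that the resulting decomposition mirrors the coproduct formula of \cite[Theorem 3.5]{BLvW}.
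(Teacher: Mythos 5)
Your proposal is correct and takes essentially the same route as the paper: the paper's proof consists of observing the $H_m(0)\otimes H_{n-m}(0)$-module isomorphism $\smodule_{\alpha}\downarrow^{H_{n}(0)}_{H_{m,n-m}(0)}\cong \bigoplus_{\beta}\smodule_{\alpha,X_\beta}$ and then invoking Proposition~\ref{prop:skewiso} summand by summand. Your explicit check that each $X_\beta$ is stable under the $\pi_i$ with $i\neq m$ (since such a $\pi_i$ never moves an entry across the threshold $m$, so the shape of $\rtau_{\leq m}$ is preserved) is precisely the detail the paper leaves implicit in the word ``immediate.''
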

 \begin{proof}
 This is immediate once we realize the following isomorphism of $H_{m}(0)\otimes H_{n-m}(0)$-modules.
 \begin{eqnarray*}
 \smodule_{\alpha}\downarrow^{H_{n}(0)}_{H_{m,n-m}(0)}\cong \bigoplus_{\substack{\beta\vDash n-m\\\beta\lessc\alpha}}\smodule_{\alpha,X_\beta}
 \end{eqnarray*}
 \end{proof}
 
 Given $\alpha \vDash n$, let us denote by $\alpha ^-$ any composition obtained by the removal of a removable node from $\alpha$. Also, let us observe that 
 $$H_{1}(0)\otimes H_{n-1}(0)\cong H_{1, n-1}(0) \cong H_{n-1}(0)$$since $H_1(0)\cong \mathbb{C}$. Then as a corollary to Theorem~\ref{the:restriction_rule} we have the following branching rule, analogous to the branching rule related to Schur functions \cite[Theorem 2.8.3]{sagan}.

 \begin{corollary}\label{cor:branching_rule}(Branching rule)
Let $\alpha \vDash n$. 
 \begin{eqnarray*}
 \smodule_{\alpha}\downarrow^{H_{n}(0)}_{H_{n-1}(0)} \cong \bigoplus_{\alpha ^-}\smodule_{\alpha ^-}
 \end{eqnarray*}
 \end{corollary}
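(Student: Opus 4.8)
The plan is to obtain the branching rule as the $m=1$ specialization of the restriction rule (Theorem~\ref{the:restriction_rule}), so the substantive work is already done; what remains is to simplify the right-hand side and to match the index set of compositions covered by $\alpha$ with the set of compositions $\alpha^-$. First I would set $m=1$ in Theorem~\ref{the:restriction_rule}, obtaining
$$\smodule_{\alpha}\downarrow^{H_{n}(0)}_{H_{1,n-1}(0)} \cong \bigoplus_{\substack{\beta\vDash n-1\\\beta\lessc\alpha}}\smodule_{\alpha\cskew\beta}\otimes \smodule_{\beta},$$
and then invoke the identifications $H_{1,n-1}(0)\cong H_{n-1}(0)$ and $H_1(0)\cong\bC$ recorded just before the corollary.

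The only subtlety in the first simplification is to observe that every $\beta$ appearing in the sum satisfies $|\alpha\cskew\beta|=1$, so $\alpha\cskew\beta$ is a single cell and $\SRCT(\alpha\cskew\beta)$ is a singleton; hence $\smodule_{\alpha\cskew\beta}$ is the one-dimensional trivial $H_1(0)$-module. Tensoring with the trivial module over $H_1(0)\cong\bC$ does nothing, so $\smodule_{\alpha\cskew\beta}\otimes\smodule_{\beta}\cong\smodule_{\beta}$ as $H_{n-1}(0)$-modules, and the right-hand side collapses to $\bigoplus_{\beta}\smodule_\beta$ indexed by the $\beta\vDash n-1$ covered by $\alpha$.

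The main point to establish is that this index set $\{\beta\vDash n-1 \suchthat \beta\lessc\alpha\}$ of compositions covered by $\alpha$ in $(\Lc,\lessc)$ coincides with the set $\{\alpha^-\}$ of compositions obtained by deleting a removable node from $\alpha$. I would verify this by directly comparing Definition~\ref{def:Lc} with the definition of a removable node. Reading the cover relation backwards, $\beta\coverc\alpha$ means either $\alpha=(1,\beta)$ (so $\alpha_1=1$ and $\beta=(\alpha_2,\ldots,\alpha_{\ell(\alpha)})$), or $\beta$ is obtained from $\alpha$ by subtracting $1$ from a part $\alpha_k\geq 2$ for which no earlier part equals $\alpha_k-1$. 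These two cases correspond exactly to removing the removable node at $(1,1)$ when $\alpha_1=1$, and to removing the removable node at the end of row $k$ when $\alpha_k\geq 2$ has no earlier part equal to $\alpha_k-1$; the latter proviso is precisely the condition appearing in the definition of removable node. Since distinct removable nodes yield distinct $\alpha^-$ and distinct covers yield distinct $\beta$, this correspondence is in fact an equality of sets.

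I expect this last step to be the main, though routine, obstacle: it is purely a matter of carefully translating between the cover relations of the reverse composition poset and the combinatorics of removable nodes, taking care that the prefix-$1$ cover matches the $\alpha_1=1$ removable node and that the ``leftmost part of its size'' proviso matches the ``no earlier part equal to $\alpha_i-1$'' proviso. Once the index sets are identified, substituting into the collapsed restriction rule yields $\smodule_{\alpha}\downarrow^{H_{n}(0)}_{H_{n-1}(0)}\cong\bigoplus_{\alpha^-}\smodule_{\alpha^-}$, completing the proof.
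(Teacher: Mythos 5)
Your proposal is correct and follows exactly the paper's route: the paper likewise obtains the branching rule by specializing the restriction rule to $m=1$ and invoking $H_1(0)\otimes H_{n-1}(0)\cong H_{n-1}(0)$, leaving implicit the details you spell out (that $\smodule_{\alpha\cskew\beta}$ is the one-dimensional $H_1(0)$-module and that the compositions $\beta\vDash n-1$ with $\beta\lessc\alpha$ are precisely the $\alpha^-$). Your careful matching of the cover relations of $(\Lc,\lessc)$ with removable nodes is a valid and welcome elaboration of what the paper treats as immediate.
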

 
 \section{Further avenues}\label{sec:further} At this point there are a number of natural avenues to pursue, and we conclude by outlining some of them.
Regarding the new basis of canonical quasisymmetric functions from Section~\ref{sec:canonical_basis} a natural path to investigate is the algebraic and combinatorial properties that this basis possesses. Remaining with the results from this section, we could also classify all compositions $\alpha$ for which the cardinality of the equivalence class $E_\alpha$, that is $\dim \smodule _{\alpha, E_\alpha}$, is equal to the number of truncated shifted reverse tableaux of suitable shape. Additionally, we could apply the results from this question to find new enumerative results for truncated shifted reverse tableaux of suitable shape.

Moving towards the representation theory of the $0$-Hecke algebra, a natural path to investigate is whether there is a reciprocal induction rule with respect to the restriction rule of Theorem~\ref{the:restriction_rule}. Turning lastly to discrete geometry, in the light of Theorem~\ref{the:bruhatordersubinterval}, what subintervals of the Bruhat order arise as posets generated by $(E ,\po)$ for some composition $\alpha$, and equivalence class $E$ under $\equiva$? Which of these subintervals are rank-symmetric or rank-unimodal? We know they are not all rank-symmetric as if $\alpha = (2,4)$ then $(E _\alpha,\po)$ is not rank-symmetric. However, the data computed implies the following conjecture, with which we conclude.

\begin{conjecture}\label{con:rankuni} Given a composition $\alpha$ and equivalence class $E$ under $\equiva$, the poset $(E , \po)$ is rank-unimodal.
\end{conjecture}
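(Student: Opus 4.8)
The plan is to exploit the poset isomorphism of Theorem~\ref{the:bruhatordersubinterval}, which identifies $(E,\po)$ with the weak Bruhat interval $[\col_{\rtau_{0,E}},\col_{\rtau_{1,E}}]$ in $\sgrp_n$, and then to establish unimodality of the rank-generating function of this interval. Under the isomorphism $\rtau \mapsto \col_\rtau$ the rank of $\rtau$ equals $l(\col_\rtau) - l(\col_{\rtau_{0,E}})$, which by Corollary~\ref{cor:equivcyclic} and Lemma~\ref{lem:precimpliesbruhat} is precisely the number of flips needed to reach $\rtau$ from the source $\rtau_{0,E}$; in particular the covers of $(E,\po)$ are exactly the single non-attacking-descent flips. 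Writing $R_i$ for the set of $\rtau \in E$ of rank $i$ and $r = l(\col_{\rtau_{1,E}}) - l(\col_{\rtau_{0,E}})$, the goal reduces to showing that the sequence $(|R_0|,|R_1|,\ldots,|R_r|)$ is unimodal.

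First I would set up order-raising and order-lowering operators on the graded vector space $\bigoplus_i \bC R_i$: let $U$ send $\rtau$ to the sum of all tableaux obtained from $\rtau$ by a single non-attacking flip (its upper covers in $(E,\po)$), and let $D$ be a weighted formal adjoint built from its lower covers. The hope is to choose scalar weights making $(U,D)$ behave like the raising and lowering operators of an $\mathfrak{sl}_2$-representation, i.e.\ to produce a Lefschetz element whose powers $U^{k}$ have full rank between symmetrically placed levels; full rank of these iterates yields unimodality. Since the flips are exactly the non-attacking action of the generators $\pi_i$, the braid and commutation relations of $\hn$ proved in Theorem~\ref{the:0heckerels} should govern how $U$ interacts with the individual covering moves, which is where that structure enters.

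The main obstacle is that, unlike the full weak order $[e,w_0]$ whose rank-generating function is the unimodal symmetric $q$-factorial $\prod_{i=1}^{n-1}(1+q+\cdots+q^i)$, a general interval $[\col_{\rtau_{0,E}},\col_{\rtau_{1,E}}]$ is \emph{not} rank-symmetric, as the authors note for $\alpha=(2,4)$. This defeats the standard route to unimodality through hard-Lefschetz or Peck-poset machinery, all of which presupposes rank-symmetry; one must instead produce unimodality directly, and constructing a provably full-rank raising operator without the crutch of symmetry is the crux of the difficulty.

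As an alternative I would attempt induction on $n$ via the branching structure underlying Corollary~\ref{cor:branching_rule}: the operation $\rtau \mapsto \rtau_{-1}$ of removing the cell containing $1$ and subtracting $1$ preserves source/sink status, and partitions $E$ according to the column $c \in DRN(\alpha,E)$ occupied by $1$, each block being controlled by a smaller equivalence class. The rank-generating function of $E$ then decomposes as a sum, over these columns, of shifted copies of the (inductively unimodal) rank-generating functions of the smaller classes. The genuinely hard remaining step is that a sum of unimodal sequences need not be unimodal, so one would have to show the peaks of the summands are suitably aligned; I expect controlling this alignment, again through the weak-order realization, to be the true bottleneck, and it is presumably why the statement remains only a conjecture.
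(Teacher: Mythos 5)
The statement you were asked to prove is stated in the paper only as Conjecture~\ref{con:rankuni}; the authors offer no proof and explicitly present it as an open problem supported by computed data, so there is no argument of theirs to compare yours against. What you have written is an accurate and well-informed survey of strategies rather than a proof, and you say as much yourself. The two concrete gaps are exactly where you locate them. First, the Lefschetz/Peck route: you would need to produce raising operators $U^{k}$ of full rank between levels, but since the intervals $[\col_{\rtau_{0,E}},\col_{\rtau_{1,E}}]$ are in general not rank-symmetric (the paper's own example $\alpha=(2,4)$), there is no pair of ``symmetrically placed levels'' to work between, and the standard $\mathfrak{sl}_2$ machinery gives unimodality only as a consequence of the symmetric string decomposition; without symmetry, full rank of $U$ between consecutive levels up to the peak and of $D$ after it is a genuinely different (and unestablished) condition, and nothing in the $\hn$-relations of Theorem~\ref{the:0heckerels} is shown to force it. Second, the inductive route via $\rtau\mapsto\rtau_{-1}$: the decomposition of the rank-generating function of $E$ into shifted rank-generating functions of smaller classes (indexed by the column of $DRN(\alpha,E)$ containing $1$) is plausible and worth writing down carefully, but a sum of shifted unimodal polynomials is not unimodal without control on the locations of the peaks, and you give no mechanism for that alignment. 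Either gap alone leaves the statement unproved; as it stands your proposal correctly reduces the conjecture to two hard sub-problems but resolves neither.

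One smaller caution on the inductive approach: you should verify that for a fixed column $c$, the set $\{\rtau_{-1} \suchthat \rtau\in E,\ 1 \text{ in column } c\}$ really is the \emph{entire} equivalence class of $\SRCT(\alpha^-)$ it sits inside, not a proper subset, and that the rank shift is constant on each block (it is the number of entries preceding $1$ in $\col_\rtau$, which does depend only on $c$ and on $\st(\rtau)$, so this part should go through). Surjectivity onto the smaller class is the step most likely to need an explicit argument, since an arbitrary tableau of the smaller class must be shown to extend to a tableau of $E$ with $1$ in column $c$.
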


%: Bibliography

\def\cprime{$'$}

\end{document}